\newtheorem{theorem}{Theorem}[section]
\newtheorem{definition}[theorem]{Definition}
\newtheorem{lemma}[theorem]{Lemma}
\newtheorem{proposition}[theorem]{Proposition}
\newtheorem{corollary}[theorem]{Corollary}
\newtheorem{LemmaAndDefinition}[theorem]{Lemma and Definition}
\theoremstyle{remark}
\newtheorem{remark}[theorem]{Remark}
\numberwithin{equation}{section}
\newcommand{\R}{\mathbb{R}}
\newcommand{\N}{\mathbb{N}}
\newcommand{\C}{\mathbb{C}}
\newcommand{\Z}{\mathbb{Z}}
\newcommand{\eps}{\varepsilon}
\newcommand{\fhi}{\varphi}
\newcommand{\weak}{\rightharpoonup}
\newcommand{\twoscale}{\xrightharpoonup{\; 2 \;}}
\newcommand{\del}{\partial}
\renewcommand{\div}{\operatorname{div} \,}
\newcommand{\curl}{\operatorname{curl} \,}
\newcommand{\id}{\operatorname{id}}         
\newcommand{\eff}{\operatorname{eff}}
\renewcommand{\Re}{\operatorname{Re}}
\renewcommand{\Im}{\operatorname{Im}}
\renewcommand{\i}{\operatorname{i}}
\newcommand{\spann}{\operatorname{span} }
\newcommand{\diag}{\operatorname{diag}}
\newcommand{\indicator}[1]{\mathds{1}_{#1}}
\newcommand{\T}{\mathbb{T}}
\newcommand{\calL}{\mathcal{L}}
\newcommand{\calN}{\mathcal{N}}
\def\Xint#1{\mathchoice
   {\XXint\displaystyle\textstyle{#1}}%
   {\XXint\textstyle\scriptstyle{#1}}%
   {\XXint\scriptstyle\scriptscriptstyle{#1}}%
   {\XXint\scriptscriptstyle\scriptscriptstyle{#1}}%
   \!\int}
\def\XXint#1#2#3{{\setbox0=\hbox{$#1{#2#3}{\int}$}
     \vcenter{\hbox{$#2#3$}}\kern-.5\wd0}}
\newcommand{\meanint}{\Xint-}
\DeclarePairedDelimiterX\abs[1]\lvert\rvert{
\ifblank{#1}{\:\cdot\:}{#1}
}
\DeclarePairedDelimiterX\norm[1]\lVert\rVert{
\ifblank{#1}{\:\cdot\:}{#1}}
\DeclarePairedDelimiterX\scalarproduct[2]{\langle}{\rangle}{#1,#2}
\DeclarePairedDelimiterX\set[1]\lbrace\rbrace{\def\given{\colon}#1}                            
\newcommand{\Eeta}{E^{\eta}}
\newcommand{\Heta}{H^{\eta}}
\newcommand{\curly}{\operatorname{curl}_y \,}
\newcommand{\divy}{\operatorname{div}_y \,}
\renewcommand{\d}{\, \textrm{d}}
\newcommand{\e}{\textrm{e}}
\newcommand{\lineint}[1]{\int_{\gamma} #1}
\newcommand{\jump}[1]{[#1]}
\newcommand{\NE}{\mathcal{N}_{\Sigma}}
\newcommand{\LE}{\mathcal{L}_{\Sigma}}
\newcommand{\NH}{\mathcal{N}_{Y \setminus \overline{\Sigma}}}
\newcommand{\LH}{\mathcal{L}_{Y \setminus \overline{\Sigma}}}
\title{{\LARGE\bf Effective Maxwell's equations in general periodic
    microstructures
  }}
\author{ B.\,Schweizer, M.\,Urban\thanks{Technische Universit\"at
    Dortmund, Fakult\"at f\"ur Mathematik, Vogelpothsweg 87, D-44227
    Dortmund, Germany.}}
\date{March 16, 2017}
\begin{document}

\maketitle

\pagestyle{myheadings} 
\thispagestyle{plain} 

\markboth{B.\,Schweizer, M.\,Urban}{}

\begin{center}
\vskip4mm
\begin{minipage}[c]{0.86\textwidth}
  {\small {\bfseries Abstract:} We study the time harmonic Maxwell
    equations in a meta-material consisting of perfect conductors and
    void space. The meta-material is assumed to be periodic with
    period $\eta>0$; we study the behaviour of solutions $(\Eeta, \Heta)$
    in the limit $\eta\to 0$ and derive an effective system. In 
    geometries with a non-trivial topology, the limit system implies
     that certain components of the effective fields
    vanish. We identify the corresponding effective system and can predict,
    from topological properties of the meta-material, whether or not  it permits the
    propagation of waves.
    \\[2mm]

    {\bfseries Key-words:} Maxwell's equations, homogenization,
    diffraction, periodic structure, meta-material}

\smallskip
{\bfseries MSC:} 35Q61, 35B27, 78M40, 78A45
\end{minipage}\\[3mm]
\end{center}

\section{Introduction}

We are interested in transmission properties of meta-materials. In
this context, a meta-material is a periodic assembly of perfect
conductors, and our question concerns the behaviour of electromagnetic
fields when the period of the meta-material tends to zero.  We fix a
frequency $\omega>0$ and investigate solutions to the time harmonic
Maxwell equations. Denoting the period of the meta-material by
$\eta>0$, we study the behaviour of solutions $(\Eeta, \Heta)$ to the
system
\begin{subequations}\label{eq:MaxSysSeq-intro}
  \begin{align}\label{eq:MaxSeq1-intro}
    \curl \Eeta &= \phantom{-}\i \omega \mu_0 \Heta
                  \quad\textrm{ in } \Omega\, ,\\
    \label{eq:MaxSeq2-intro}
    \curl \Heta &= - \i \omega \eps_0 \Eeta  \quad \:
                  \textrm{ in } \Omega \setminus \overline{ \Sigma}_{\eta}\, ,\\
    \label{eq:MaxSeq3-intro}
    \Eeta &= \Heta = 0 \qquad\   \textrm{ in } \Sigma_{\eta}\, ,
  \end{align}
\end{subequations}
in the limit $\eta\to 0$.  In this model, we assume that the perfect
conductor fills the subset $\Sigma_\eta\subset \Omega$ of the domain
$\Omega\subset\R^3$.

In general, meta-materials for Maxwell's equations are described with
two periodic material parameters $\eps$ and $\mu$ (permittivity and
permeability). We study here perfectly conducting inclusions, which
formally amount to setting $\eps = \infty$. In this case, the electric
and the magnetic field vanish in the inclusions; see \eqref
{eq:MaxSeq3-intro}. The material parameters in the other equations are
given by $\eps_0>0$ and $\mu_0>0$, the coefficients of vacuum.
Imposing \eqref {eq:MaxSeq1-intro} encodes boundary conditions: the
magnetic field $H$ has a vanishing normal component and the electric
field $E$ has vanishing tangential components on the boundary
$\del\Sigma_\eta$.

We ask: Can electromagnetic waves propagate in the periodic medium?
Are there components of the effective fields that necessarily vanish?
What is the effective system that describes the remaining components?

Our theory yields the following results as particular applications: In
a geometry with perfect conducting plates, transmission through the
meta-material is possible in two directions. Instead, in a geometry
with long and thin holes in the metal no transmission is possible.  

\subsection{Geometry and assumptions}\label{section:geometry and assumptions}
We are interested in studying general geometries
$\Sigma_\eta$. Nevertheless, we remain in the framework of standard
periodic homogenization, i.e., the set $\Sigma_\eta$ of inclusions is
locally periodic. A microscopic structure is considered, which is
given by a perfectly conducting part $\Sigma \subset Y$ in a single
periodicity cell $Y$, where $Y \coloneqq [-1/2, 1/2]^3$. 
We assume that the set $\Sigma$ is non empty and  open with a Lipschitz
boundary as a subset of the $3$-torus. 

Our aim is to study electromagnetic waves in an open subset
$\Omega \subset \R^3$. The meta-material is located in a second domain
$R \subset \subset \Omega$. In $\Omega \setminus R$, we have relative
permeability and relative permittivity equal to unity.
The microscopic structure in $R$ is defined using indices $k \in \Z^3$
and shifted cubes $Y_k^{\eta} \coloneqq \eta (k + Y)$, where
$\eta > 0$. By $\mathcal{K}$ we denote the index set
$\mathcal{K} \coloneqq \set{ k \in \Z^3 \given Y_k^{\eta} \subset R}$. We define the
meta-material $\Sigma_{\eta}$ by
\begin{equation}\label{eq:definition of Sigma eta}
  \Sigma_{\eta} \coloneqq \bigcup_{k \in \mathcal{K}} \eta (k +
  \Sigma) \subset R\, . 
\end{equation}

Even in the above periodic framework, quite general geometries and
topologies can be generated.  The simplest non-trivial example occurs
if we study a cylinder $\Sigma \subset Y$ that connects two opposite
faces of $Y$; see Figure \ref {fig:the metal cylinder}. The cylinder $\Sigma$
generates a set $\Sigma_\eta$ that is the union of disjoint long and  thin
fibers. In a similar way, we can generate the macroscopic geometry of
large metallic plates for which length and width are of macroscopic
size and the thickness is of order $\eta$; for the corresponding local
geometry $\Sigma$ see Figure \ref {fig:the metal plate}.

We investigate distributional solutions
$(\Eeta, \Heta) \in H^1(\Omega; \C^3) \times H^1(\Omega; \C^3)$
to~\eqref{eq:MaxSysSeq-intro}.  The number $\omega > 0$ denotes the
frequency, and $\mu_0, \eps_0 > 0$ are the permeability and the
permittivity in vacuum, respectively.  
We assume that we are given a sequence $(\Eeta, \Heta)_{\eta}$
of solutions to~\eqref{eq:MaxSysSeq-intro} that satisfies the
energy-bound
\begin{equation}
  \label{eq:energy-bound}
  \sup_{\eta > 0}\int_{\Omega}  \Big(\abs{\Heta}^2 + \abs{\Eeta}^2 \Big)< \infty\, .
\end{equation}
If~\eqref{eq:energy-bound} holds, by reflexivity of
$L^2(\Omega; \C^3)$, we find two vector fields
$E, H \in L^2(\Omega; \C^3)$ and subsequences such that
$\Eeta \weak E$ in $L^2(\Omega ; \C^3)$ and $\Heta \weak H$ in
$L^2(\Omega; \C^3)$.  Due to the compactness with respect to
two-scale convergence, we may additionally assume for fields
$E_{0}, H_{0} \in L^2(\Omega \times Y; \C^3)$ the two-scale convergence
\begin{equation}\label{eq:(Eeta, Heta) converges in two scale to (E 0,
    H 0)}
  \Eeta \twoscale E_0 \quad \textrm{ and } \quad \Heta \twoscale H_0\, .
\end{equation}

\subsection{Main results}
We obtain an effective system of equations that describes the limits
$E$ and $H$. The effective system depends on topological properties of
the microscopic geometry $\Sigma \subset Y$. We denote by
$\NE\subset \set{1,2,3}$ those directions for which no curve in
$\Sigma$ exists that connects corresponding opposite faces of $Y$
(the notation $\NE$ indicates that there is \enquote{no loop in $\Sigma$}; for a
precise definition see~\eqref{eq:index sets for H}). One of our result
is that the number of non-trivial components of the effective electric
field is given by $\abs{\NE}$ (see~Proposition \ref {proposition:
  dimension of solution space to the cell problem of E}). Similarly,
the number of non-trivial components of the effective magnetic field
is given by $\abs{\LH}$, where $\LH\subset \set{1,2,3}$ are those
directions for which a curve in $Y\setminus \overline{\Sigma}$ exists
that connects corresponding opposite faces of $Y$ (the notation $\LH$ indicates
that there is a \enquote{loop in $Y \setminus \overline{\Sigma}$}; for
the result see Proposition \ref {proposition:for every element of the
  set L E there is a solution to the cell problem}).

With the two index sets $\NE, \LH \subset \{1,2,3\}$, we can formulate
the effective system of Maxwell's equations. In the meta-material,
there holds
\begin{subequations}\label{eq:effective system - introduction}
  \begin{align}
    \curl \hat{E}   &=\phantom{-} \i \omega \mu_0 \hat{\mu} \hat{H}\,,\\
    (\curl \hat{H})_k &= -\i \omega \eps_0 (\hat{\eps} \hat{E})_k \qquad
                        \textrm{ for every } k \in \NE, \\
    \hat{E}_k     &= 0 \qquad \qquad \qquad \: \:\: \: \textrm{ for every }
                    k \in \set{1,2,3} \setminus \NE\, , \\
    \hat{H}_k &= 0 \qquad  \qquad \qquad \: \:\: \:  \textrm{ for every }
                k \in \set{1,2,3} \setminus \LH \, .
  \end{align}
\end{subequations}
In this set of equations, the effective relative permittivity
$\hat{\eps}$ and the effective relative permeability $\hat{\mu}$ are
defined through cell-problems. Our main result is the derivation of
these effective equations; see Theorem~\ref{theorem:macroscopic
  equations} below.  Essentially, the theorem states the following:
Let $(\Eeta, \Heta)_{\eta}$ be a bounded sequence of solutions
to~\eqref{eq:MaxSysSeq-intro} satisfying~\eqref{eq:energy-bound}, let
limit fields $(\hat{E}, \hat{H})$ be defined as weak and geometric
limits of $(\Eeta, \Heta)_{\eta}$, and let $\hat{\eps}$ and
$\hat{\mu}$ be the effective coefficients defined by cell-problems
(see~\eqref{eq:definition effective permittivity and permeability}). In
this situation, the limit $(\hat{E}, \hat{H})$ is a solution to the
effective system, which coincides with~\eqref{eq:effective system -
  introduction} in the meta-material.
Theorem~\ref{theorem:macroscopic equations} also specifies the
interface conditions along the boundary $\del R$ of the
meta-material.  The result allows to
determine, by checking topological properties of $\Sigma$,
if the meta-material supports propagating waves. To give an example:
In the case $\NE = \emptyset$ (that is, $\Sigma$ connects all opposite
faces of $Y$), the electric field $\hat E$ necessarily vanishes
identically in the meta-material and waves cannot propagate.

Of particular interest are those cases in which some components of
$\hat E$ and/or $\hat H$ vanish while the other components satisfy
certain blocks of Maxwell's equations. This occurs, e.g., in wire and
in plate structures. Our analysis is much more general: the effect
occurs when the solution spaces to the cell-problems are not three
dimensional, but have a lower dimension (drop of dimension).

\subsection{Literature}
From the perspective of applications, our contribution is closely
related to \cite {BouchitteSchweizer-Plasmons}, which is concerned
with an interesting experimental observation: Light can propagate well
in a structure made of thin silver plates; even nearly perfect
transmission through such a sub-wavelength structure was
experimentally observed. The mathematical analysis of \cite
{BouchitteSchweizer-Plasmons} explains the effect with a resonance
phenomenon. While \cite {BouchitteSchweizer-Plasmons} is purely
two-dimensional, the present contribution investigates which
genuinly three-dimensional structures are capable of showing similar
transmission properties.

From the perspective of methods, we follow other contributions more
closely.  We deal with the homogenization of Maxwell's equations in
periodic structures.  This mathematical task has already some history:
The book \cite {ZhikovMR1329546} contains the homogenization of the
equations in a standard setting (i.e., periodic and uniformly bounded
coefficient sequences $\eps_\eta$ and $\mu_\eta$); for this case see
also \cite {MR2029130}.

The first homogenization result for Maxwell's equations in a singular
periodic structure appeared in \cite {BouchitteSchweizer-Max}: Small
split rings with a large absolute value of $\eps_\eta$ were analyzed,
and a limit system with effective permittivity $\eps_0 \hat \eps$ and
effective permeability $\mu_0 \hat \mu$ was derived. The key point is
that the coefficient $\hat\mu$ of the limit system can have a negative
real part, due to resonance of the micro-structure. A similar result
was obtained in \cite {BouchitteBourel2009} with a simpler local
geometry; the effect of a negative $\Re \hat\mu$ is there obtained
through Mie-resonance. An extension to flat rings was performed in
\cite {Lamacz-Schweizer-Max}.  The construction of a negative index
material (with negative permittivity \emph{and} negative permeability)
was successfully achieved in \cite {Lamacz-Schweizer-2016} with the
additional inclusion of long and thin wires. For a recent overview we
refer to \cite {Schweizer-resonances-survey-2016}.

The step towards perfectly conducting materials was done in \cite
{Lipton-Schweizer}, in which~\eqref{eq:MaxSysSeq-intro} was also used.
The result of \cite {Lipton-Schweizer} is a limit system that takes
the usual form of Maxwell's equations, again with effective
permittivity $\eps_0 \hat \eps$, effective permeability
$\mu_0 \hat \mu$, and negative $\Re \hat\mu$.  Once more, the negative
coefficient is possible since the periodic structure $\Sigma_\eta$ has
a singular (torus-like) geometry.

Compared to the results described above, the work at hand takes a
different perspective: We are not interested in a negative
$\Re \hat\mu$, but we are interested to see whether or not certain
components of the effective fields have to vanish (due to geometrical
properties of the microstructure). If some components vanish, we want
to extract the equations for the remaining components.  The effect of
vanishing components is always a result of geometries in which the
substructure $\Sigma$ of the periodicity cell $Y$ touches two opposite
faces of $Y$. We recall that such substructures also enabled the
effect of a negative index in \cite {Lamacz-Schweizer-2016}. However,
in all contributions mentioned above (besides from \cite
{BouchitteSchweizer-Plasmons} and \cite {Lamacz-Schweizer-2016}) the
resonant structure $\Sigma$ is assumed to be compactly contained in
the cell $Y$.

It is worth mentioning that the study of wires (as a particular
example of a periodic microstructure with macroscopic dimensions) has
a longer history. Bouchitté and Felbacq showed that wire structures
with extreme coefficient values can lead to the effect of a negative
effective permittivity; see \cite{MR2262964, MR1444123}.  Related
wire-constructions have been analyzed by Chen and Lipton; see
\cite{ChenLipton2010, ChenLipton2013}.

Our results concern the scattering properties of periodic media. We
emphasize that, in contrast to many classical contributions, we treat
only the case that the period is small compared to the wave-length of
the incident wave (prescribed by the frequency $\omega$). Also in the
case that the period and the wave-length are of the same order, one
can observe interesting transmission properties, e.g., due to the
existence of guided modes in the periodic structure.  The
corresponding results are known as \enquote{diffraction theory} or
\enquote{grating theory}. For a fundamental analysis of existence and
uniqueness questions in such a diffraction problem we mention \cite
{MR1273315}; see also \cite {MR1961652}.  Regarding classical methods
we mention \cite {MR1160941}, where the transmission properties of a
periodically perturbed interface are studied by means of integral
methods. A more recent contribution regarding a similar periodic
interface is \cite {MR3335171}. Closer to our analysis is \cite
{MR1694448}, where a three-dimensional layer of a periodic material is
studied (the material is periodic in two directions); see also the
overview \cite {MR1335399}.

\subsection{Methods and organization}
We use the tool of two-scale convergence of~\cite{Allaire1992} and
consider the two-scale limits $E_0 = E_0(x, y)$ and $H_0=H_0(x, y)$ of
the fields $\Eeta$ and $\Heta$. By standard arguments, we obtain cell
problems for $E_0(x, \cdot)$ and $H_0(x, \cdot)$. We then characterise
the solution spaces of these cell problems---that is, we determine
bases of the solution spaces in terms of the index sets $\NE$ and
$\LH$. The crucial observation is the following: if the dimension of
one of the solution spaces is less than three, the standard procedure
to define homogenized coefficients does no longer work.  Hence the
form of the effective system is not clear.  However, once the
homogenized coefficients are carefully defined, the derivation of the effective
system is rather standard; see~\cite{Lamacz-Schweizer-2016,
  Lipton-Schweizer}.  Note that in \cite {BouchitteSchweizer-Max,
  Lamacz-Schweizer-Max, Lipton-Schweizer}, a full torus geometry was
considered; the fact that the complement of the torus is not simply
connected leads to a $4$-dimensional solution space in the cell
problem for $H$. We, however, are interested in the opposite effect:
Geometries that generate solution spaces of dimension smaller than
$3$.

In Section~\ref{sec:preliminary geometric results} we introduce the
notions of simple Helmholtz domains, $k$-loops, and the geometric
average, and prove auxiliary results. The derivation of the cell
problems and the characterisation of their solution spaces is carried
out in Section~\ref{sec:cell problems}. In Section~\ref{sec:derivation
  of the effective equations} we prove the main result, i.e., we
derive the effective system~\eqref{eq:effective system -
  introduction}. Section~\ref{section:examples} is devoted to the
discussion of some examples of microstructures.

\section{Preliminary geometric results}\label{sec:preliminary geometric results}

\subsection{Periodic functions, Helmholtz-domains, and $k$-loops}

Let $Y$ denote the closed cube $[-1/2, 1/2]^3$ in $\R^3$. We
define an equivalence relation $\backsim$ on $Y$ by identifying
opposite sides of the cube: $y_a \backsim y_b$ whenever
$y_a - y_b \in \Z^3$. The quotient space $Y / \backsim$ is
identified with the flat $3$-torus $\T^3$; the canonical projection
$Y \hookrightarrow \T^3$ is denoted by $\iota$.

A map $u \colon \R^3 \to \C^n$ is called \emph{$Y$-periodic} if
$u(\cdot + l) = u(\cdot)$ for all $l \in \Z^3$. For $m \in \N \cup
\set{\infty}$ and $n \in \N$, the function space $C_{\sharp}^m(Y ; \C^n)$
denotes the restriction of $Y$-periodic maps $\R^3 \to \C^n$ of class $C^m$ to $Y$; we may identify
this function space with $C^m(\T^3; \C^n)$.  Similarly, we define
\begin{equation*} 
  H_{\sharp}^m(Y ; \C^n) \coloneqq \set[\big]{u |_Y \colon Y \to \C^n \given u \in H^m_{\textrm{}loc}(\R^3; \C^n) \textrm{ is } Y \textrm{-periodic}} \, ,
\end{equation*}
which can be identified with $H^m(\T^3 ; \C^n)$.
We note that $L_{\sharp}^2(Y; \C^n) = H_{\sharp}^0(Y ; \C^n) = L^2(Y ;
\C^n)$. 

For a subset $U \subset Y$ such that $\iota(U) \subset \T^3$ is open,
we set $ H^m_{\sharp}(U ; \C^n)\coloneqq H^m(\iota(U);
\C^n)$. We denote by $H_{\sharp}(\curl, U)$ and $H_{\sharp}(\div, U)$
the spaces of all $L^2( \iota(U); \C^3)$-vector fields $u \colon U \to
\C^3$ such that the distributional curl and the distributional
divergence satisfy $\curl u \in L^2(\iota(U)); \C^3)$ and $\div u \in
L^2(\iota(U))$, respectively.  For brevity, we write $C_{\sharp}^k(U)$,
$L_{\sharp}^2(U)$,  and $H_{\sharp}^1(U)$ if no confusion about the
target space is possible. 

\begin{definition}[Simple Helmholtz domain]
 Let $U \subset Y$ be such that $\iota(U)$ is open in $\T^3$. We say
 that $U$ is a \emph{simple Helmholtz domain} if for every vector
 field $u \in L_{\sharp}^2(U; \C^3)$ with $\curl u = 0$ in $U$ there exist a potential $\Theta \in H_{\sharp}^1(U)$ and a constant $c_0 \in \C^3$ such that $u = \nabla \Theta + c_0$ in $U$.  
\end{definition}

\begin{remark}\label{remark:constant in Helmholtz domain is not unique}
  Note that, in general,  the constant $c_0$ is not unique. Take, for
  instance, $\Sigma \coloneqq B_r(0)$ with $r \in (0, 1/2)$. Then
  $\Theta(y) \coloneqq \lambda y_k$ and $c_0 \coloneqq - \lambda \e_k$
  yields a representation of $u =0$ for every $\lambda \in \C$ and $k
  \in \set{1,2,3}$. For a generalisation see Lemma \ref{lemma:existence of potentials in case of no k-loop}.
\end{remark}

In what follows, we consider curves $\gamma \colon [0, 1] \to Y$ (not
necessarily continuous in $Y$) such that
$\iota \circ \gamma \colon [0, 1] \to \T^3$ is continuous (see
Figure~\ref{fig:k-loop althoug Sigma not connected} for a subset
$U = \Sigma_1 \cup \Sigma_2 \subset Y$ that admits a discontinuous
path $\gamma$ in $U$ so that $\iota \circ \gamma$ is continuous). For
such a curve there exists a lift $\tilde{\gamma}$; that is, a
continuous curve $\tilde{\gamma} \colon [0, 1] \to \R^3$ with
$\pi \circ \tilde{\gamma} = \gamma$, where $\pi$ denotes the universal
covering $\R^3 \to \T^3$, $x \mapsto x \bmod \Z^3$.

\begin{definition}[$k$-loop]
 Let $U \subset Y$ be non empty and such that $\iota(U) \subset \T^3$ is open. Let $e_1, e_2,\e_3 \in \R^3$ be the standard basis
  vectors, and let $k \in \set{1,2,3}$. We say that a map $\gamma \colon [0, 1] \to
  \T^3$ is a \emph{$k$-loop in $\iota(U)$} if the path $ \gamma \colon [0, 1]
  \to \iota(U)$ is continuous and piecewise continuously differentiable, $\gamma(1) = \gamma(0)$, and $\scalarproduct{\tilde{\gamma}(1) - \tilde{\gamma}(0)}{\e_k} \neq 0$, where $\tilde{\gamma} \colon [0, 1] \to \R^3$ is a lift of $\gamma$.
   \end{definition}

\begin{remark}
  \textit{a)} For a lift $\tilde{\gamma}$ of the $k$-loop $\gamma$, we
  have that $\tilde{\gamma}(1) - \tilde{\gamma}(0) \in \Z^3$ by
  $\gamma(1) = \gamma(0)$.\\
  \textit{b)} By abuse of notation, we refer to
  $\gamma \colon [0,1] \to Y$ as a $k$-loop in $U$ when the curve
  $\iota \circ \gamma \colon [0, 1] \to U$ is a $k$-loop in $U$.
\end{remark}
For a subset $U$ of $Y$, we introduce the following index sets:
\begin{subequations} \label{eq:index sets for H}
\begin{align}
  \mathcal{L}_{U} &\coloneqq \set[\big]{k \in \set{1,2,3} \given \textrm{there is a $k$-loop in }  U} \label{eq:index sets for H 1}\, ,\\
  \mathcal{N}_{U} &\coloneqq \set[\big]{ k \in \set{1,2,3} \given \textrm{there is no $k$-loop in } U} \, . \label{eq:index sets for H 2}
\end{align}
\end{subequations}
Note that $\mathcal{L}_U \dot{\cup} \mathcal{N}_U = \set{1,2,3}$. We
turn to the analysis of potentials defined on $U$.
\begin{lemma}\label{lemma:existence of potentials in case of no k-loop}
  Let $U \subset Y$ be non-empty and such that $\iota(U) \subset \T^3$ is open. Assume further that $U$ has only finitely many connected components. Let $k$ be an element of $\set{1,2,3}$. If there is no $k$-loop in $U$, then there exists a potential $\Theta_k \in H_{\sharp}^1(U)$ such that $\nabla \Theta_k = \e_k$.
\end{lemma}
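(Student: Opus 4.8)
\emph{The plan.} I would read the requirement $\nabla\Theta_k=\e_k$ as a purely topological (monodromy) statement. Locally any solution of $\nabla\Theta_k=\e_k$ is of the form $y\mapsto y_k+\text{const}$, so the only issue is whether these local pieces glue into a single function on the quotient $\iota(U)\subset\T^3$. To make this precise I pass to the universal covering $\pi\colon\R^3\to\T^3$, $x\mapsto x\bmod\Z^3$, and set $\tilde U\coloneqq\pi^{-1}(\iota(U))$, a $\Z^3$-invariant open subset of $\R^3$. A function on $\iota(U)$ is the same as a $\Z^3$-periodic function on $\tilde U$, and under this identification I look for a $\Z^3$-periodic $g\colon\tilde U\to\C$ with $\nabla g=\e_k$.

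\emph{Construction.} Since $\nabla g=\e_k$, on every connected component $C$ of $\tilde U$ the function $g$ must equal $y_k+c_C$ for a constant $c_C\in\C$, so the problem reduces to choosing the locally constant data $(c_C)_C$. Periodicity $g(\cdot+l)=g(\cdot)$ translates into the requirement $c_{C+l}=c_C-l_k$ for all $l\in\Z^3$. I fix, over each of the finitely many connected components $V_1,\dots,V_m$ of $\iota(U)$, a base component $C_0^{(j)}$ of $\tilde U$ lying above $V_j$; since $\Z^3$ permutes the components above $V_j$ transitively, every such component has the form $C_0^{(j)}+l$, and I set $c_{C_0^{(j)}+l}\coloneqq -l_k$. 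The only thing to check is that this is well defined, i.e.\ that $C_0^{(j)}+l=C_0^{(j)}+l'$ forces $l_k=l'_k$. Now $C_0^{(j)}+(l-l')=C_0^{(j)}$ means that a point of that component is joined to its translate by $l-l'$ by a path inside $\tilde U$; projecting this path to $\T^3$ yields a loop $\gamma$ in $V_j$ whose lift satisfies $\scalarproduct{\tilde\gamma(1)-\tilde\gamma(0)}{\e_k}=(l-l')_k$. Were this nonzero, $\gamma$ would be a $k$-loop, contradicting the hypothesis; hence $(l-l')_k=0$, i.e.\ $l_k=l'_k$. Thus $g$ is well defined and $\Z^3$-periodic, it descends to $\Theta_k\in C^\infty(\iota(U))$, and $\nabla\Theta_k=\e_k$. (Equivalently, one may define $\Theta_k$ on each $V_j$ by integrating $\e_k$ along lifted paths from a fixed base point, the no-$k$-loop condition being exactly what makes that line integral path-independent.)

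\emph{Integrability and the main obstacle.} It remains to verify $\Theta_k\in H^1_\sharp(U)=H^1(\iota(U))$. Since $\iota(U)$ has finite measure, $\nabla\Theta_k=\e_k$ lies in $L^2$, so I only need $\Theta_k\in L^2$, for which it suffices that $\Theta_k$ be bounded. This is the step that uses the finiteness of the number of components, and I expect it to be the main technical obstacle: a priori a component $C$ of $\tilde U$ could be unbounded in the $k$-direction, making $y_k$ and hence $\Theta_k$ unbounded. To exclude this I would encode the combinatorics of $\tilde U$ in a $\Z^3$-periodic graph whose vertices are the translated copies $U_i+l$ of the finitely many components $U_1,\dots,U_N$ of $U$, with edges recording which copies merge across cube faces; a component $C$ of $\tilde U$ then corresponds to a connected component $G$ of this graph, with stabiliser $\Gamma\subset\Z^3$. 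Any two vertices of $G$ in the same $\Z^3$-orbit differ by an element of $\Gamma$ (a joining path again projects to a loop), so $G/\Gamma$ injects into the quotient of the vertex set by $\Z^3$, which is finite; hence $G/\Gamma$ is finite. Because there is no $k$-loop, $\Gamma\subset\set{l\in\Z^3\given l_k=0}$, so the height $y_k$ is $\Gamma$-invariant and descends to the finite set $G/\Gamma$. Therefore $y_k$ varies in a bounded range on $C$, $\Theta_k$ is bounded, and together with $\nabla\Theta_k\in L^2$ this yields $\Theta_k\in H^1_\sharp(U)$. The conceptual heart of the argument is the well-definedness step, where the absence of $k$-loops removes the monodromy obstruction; the delicate point requiring the finiteness hypothesis is the boundedness just described.
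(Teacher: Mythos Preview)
Your proposal is correct and follows essentially the same route as the paper: both construct $\Theta_k$ locally as $y\mapsto y_k+\text{const}$ and use the absence of $k$-loops to remove the monodromy obstruction (the paper's ``rigorous'' version phrases this via path integration on $\iota(U)$, which you also note parenthetically). Your treatment is in fact more complete than the paper's, which stops after well-definedness and does not explicitly verify $\Theta_k\in H^1_\sharp(U)$; your graph/stabiliser argument for boundedness, exploiting the finiteness of components, fills that gap.
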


\begin{proof}
  We may assume that $\iota(U)$ is connected; otherwise each connected component is treated separately. We fix $k \in \set{1,2,3}$, and consider the two opposite faces 
  \begin{equation*}
    \Gamma_{k}^{(l)} \coloneqq \set{y \in Y \given y_k =-1/2} \quad \textrm{ and } \quad \Gamma_{k}^{(r)} \coloneqq \set{y \in Y \given y_k = +1/2}\, .
  \end{equation*}
  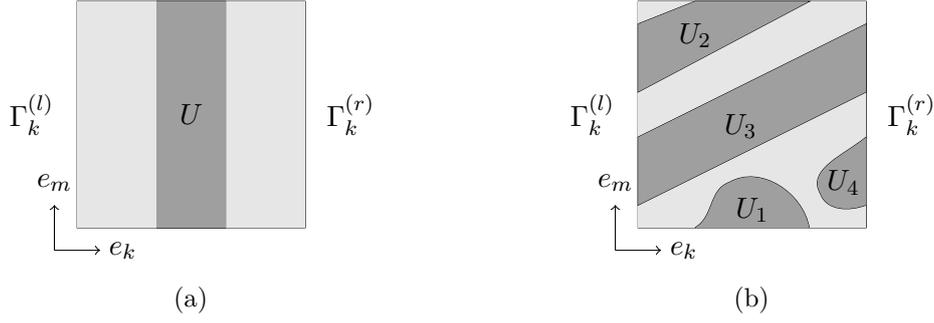
\begin{figure}
    \centering
    \begin{subfigure}[t]{0.49\textwidth}
      \hspace{1cm}
      \begin{tikzpicture}[scale=3]
  \draw[->] (-.6, -.6) -- (-.4, -.6);
  \draw[->] (-.6, -.6) -- (-.6, -.4);

  \coordinate (A) at (-0.5, -0.5);
  \coordinate (B) at (0.5,-0.5);
  \coordinate (C) at (0.5,0.5);
  \coordinate (D) at (-0.5, 0.5);
 
  \coordinate (E) at (-.15, -.5);
  \coordinate (F) at (-.15, .5);
  \coordinate (G) at (.15, .5);
  \coordinate (H) at (.15, -.5);
  \coordinate (I) at (.2, .2);

  \draw[black]  (A) -- (B) -- (C) -- (D) -- cycle;
  \fill[gray!20!white] (A) -- (B) -- (C) -- (D) -- cycle;
  
  \draw[black] (E) -- (F) -- (G) -- (H) -- cycle;
  \fill[gray!75!white] (E) -- (F) -- (G) -- (H) -- cycle;

  \node[] at (-.3, -.6) {$e_k$};
  \node[] at (-.6, -.3) {$e_m$};
  \node[] at (0, 0) {$U$};
        \node[] at (-.7, 0) {$\Gamma_k^{(l)}$};
        \node[] at (.7, 0) {$\Gamma_k^{(r)}$};
      \end{tikzpicture}
      \caption{ }
      \label{fig:proof of technical lemma - first case}
    \end{subfigure}
    \begin{subfigure}[t]{0.49\textwidth}
      \hspace{1cm}
      \begin{tikzpicture}[scale = 3]
  \draw[->] (-.6, -.6) -- (-.4, -.6);
  \draw[->] (-.6, -.6) -- (-.6, -.4);

  \coordinate (A) at (-0.5, -0.5);
  \coordinate (B) at (0.5,-0.5);
  \coordinate (C) at (0.5,0.5);
  \coordinate (D) at (-0.5, 0.5);
  
 \coordinate (E) at (-.5,-.4);
 \coordinate (F) at (.5,.1);
 \coordinate (G) at (.5, .4);
 \coordinate (H) at (-.5, -.1);
 \coordinate (I) at (-.5, .1);
 \coordinate (J) at (-.5, .1);
 \coordinate (K) at (.25, .5);
 \coordinate (L) at (-.25, .5);
 \coordinate (M) at (-.5, .4);
 \coordinate (N) at (-.25, -.5);
 \coordinate (N1) at (-.1, -.3);
 \coordinate (O) at (.25, -.5);
 \coordinate (P) at (.5, -.4);          
 \coordinate (P1) at (.3, -.35);
 \coordinate (Q) at (.5, -.1);

  \draw[black]  (A) -- (B) -- (C) -- (D) -- cycle;
  \fill[gray!20!white] (A) -- (B) -- (C) -- (D) -- cycle;
  
 \draw[black] (E)--(F) -- (G) -- (H) -- cycle;
 \fill[gray!75!white] (E)--(F) -- (G) -- (H) -- cycle;
 \draw[black] (J) --(K) -- (L) -- (M) -- cycle;
 \fill[gray!75!white] (J) --(K) -- (L) -- (M) -- cycle;
 \draw[black] (N) to [out=30, in=200] (N1)to[out=30, in=100] (O);
 \fill[gray!75!white] (N) to [out=30, in=200] (N1) to[out=30, in=100] (O);
 \draw[black] (P) to [out=200, in=290] (P1) to [out=120,in=220] (Q);
 \fill[gray!75!white](P) to [out=200, in=290] (P1) to [out=120,in=220] (Q);

  \node[] at (-.3, -.6) {$e_k$};
  \node[] at (-.6, -.3) {$e_m$};
  \node[] at (-.25, .35) {$U_2$};
        \node[] at (-.05, -.05) {$U_3$};
        \node[] at (.4, -.3) {$U_4$};
        \node[] at (0, -.42) {$U_1$};
        \node[] at (-.7, 0) {$\Gamma_k^{(l)}$};
        \node[] at (.7, 0) {$\Gamma_k^{(r)}$};
      \end{tikzpicture}
      \caption{ }
      \label{fig:proof of technical lemma - second case}
    \end{subfigure}
\caption{The sketches shows the projection of the periodicity cell $Y$
    to the $e_k$-$e_m$-plane. We assume that the geometry is
    independent of the third coordinate. (a) The set $U$ does not touch one of the
  boundaries $\Gamma_k^{(l)}$ or $\Gamma_k^{(r)}$, and hence $k \in
  \calN_U$. On the other hand, $m \in \calL_U$ since an $m$-loop in
  $U$ exists. (b) There are connected components of $U = U_1 \cup
  \dots \cup U_4$ that touch $\Gamma_k^{(l)}$ and $\Gamma_k^{(r)}$,
  but neither $k$-loops nor $m$-loops in $U$ exist. That is, $k, m
  \in \calN_U$.}
  \label{fig:1}
  \end{figure}

  \textit{Idea of the proof.}  By assumption, $U$ has only finitely
  many connected components $U_1, \dotsc, U_N$. Assume that none of
  the connected components touches the boundaries $\Gamma_k^{(l)}$ and
  $\Gamma_k^{(r)}$; that is, $U_i \cap \Gamma_k^{(l)} = \emptyset$ and
  $U_i \cap \Gamma_k^{(r)} = \emptyset$ for all
  $i \in \set{1, \dotsc, N}$ (as in Figure~\ref{fig:proof of technical
    lemma - first case}). In this case, we can define
  $\Theta_k \colon U \to \C$ by $\Theta_k(y) \coloneqq y_k$ and obtain
  a well-defined function $\Theta_k \in H_{\sharp}^1(U)$.

  Let us now assume that there are connected components
  $U_1, \dotsc, U_N$ such that $\iota(U_i \cup U_{i+1})$ is connected
  for all $i \in \set{1, \dotsc, N-1}$. The potential
  $\Theta_k \colon U_1 \cup \dots \cup U_N \to \C$ is defined as
  follows: On $U_1$, we set $\Theta_k(y) \coloneqq y_k$. On $U_2$, we
  define $\Theta_k(y) \coloneqq y_k + d_2$ for some $d_2 \in \Z$ such
  that $\Theta_k$ is continuous on $\iota(U_1 \cup U_2)$. This
  procedure can be continued until $\Theta_k$ is a continuous function
  on $U_1 \cup \dots \cup U_N$; the continuity of $\Theta_k$ is a
  consequence of the non-existence of a $k$-loop.

\textit{Rigorous proof.}  We denote by $\pi \colon \R^3 \to \T^3$, $x \mapsto x \bmod \Z^3$ the universal covering of $\T^3$. A lift $ [0, 1] \to \R^3$ of a continuous path $\gamma \colon [0, 1] \to \iota(U)$ is denoted by $\tilde{\gamma}$; that is, $\gamma = \pi \circ \tilde{\gamma}$. 

Fix a point $x_0 \in \iota(U)$. For every $x \in \iota(U)$ choose a
piecewise smooth path $\gamma \colon [0, 1] \to \iota(U)$ joining
$\gamma(0) = x_0$ and $\gamma(1) = x$. Let
$\tilde{\gamma} \colon [0,1] \to \R^3$ be a lift of $\gamma$, and
define $\Theta_k \colon \iota(U) \to \C$ by
$\Theta_k(x) \coloneqq \scalarproduct{\tilde{\gamma}(1) -
  \tilde{\gamma}(0)}{\e_k}$. Note that the non-existence of a $k$-loop
ensures that $\Theta_k$ is well defined. We further observe that the
difference $\tilde{\gamma}(1) - \tilde{\gamma}(0)$ is independent of
the chosen lift $\tilde{\gamma}$; indeed, for two lifts
$\tilde{\gamma}$ and $\tilde{\delta} $ of $\gamma$, there is
$l \in \Z^3$ such that $\tilde{\gamma} = \tilde{\delta} + l$. 
\end{proof}

\begin{corollary}\label{corollary:helmholtz decomposition with vanishing components of c_0}
  Let $U \subset Y$ be a simple Helmholtz domain. If $u \in L_{\sharp}^2(Y ;
  \C^3)$ satisfies $\curl u = 0$ in $U$, then there are a potential $\Theta \in H_{\sharp}^1(U)$ and a constant $c_0 \in \C^3$ such that $u = \nabla \Theta + c_0$ in $U$ and $\scalarproduct{c_0}{\e_k} = 0$ for every $k\in \mathcal{N}_U$.
\end{corollary}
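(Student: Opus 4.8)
The plan is to start from the Helmholtz decomposition guaranteed by the hypothesis that $U$ is a simple Helmholtz domain and then to absorb the unwanted components of the constant into the potential. Since $u \in L_{\sharp}^2(Y ; \C^3)$ restricts to an element of $L_{\sharp}^2(U ; \C^3)$ with $\curl u = 0$ in $U$, the definition of a simple Helmholtz domain directly provides a potential $\Theta \in H_{\sharp}^1(U)$ and a constant $c_0 \in \C^3$ with $u = \nabla \Theta + c_0$ in $U$. The only thing left to arrange is that the components of $c_0$ in the directions indexed by $\mathcal{N}_U$ vanish.

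The key tool is Lemma~\ref{lemma:existence of potentials in case of no k-loop}. For each $k \in \mathcal{N}_U$ there is, by the very definition of $\mathcal{N}_U$, no $k$-loop in $U$, so the lemma yields a potential $\Theta_k \in H_{\sharp}^1(U)$ with $\nabla \Theta_k = \e_k$. (Here I use that $U$ has only finitely many connected components, which is the standing assumption in the situations of interest.) This is precisely the statement that, in the no-loop directions, the constant field $\e_k$ is itself a periodic gradient and can therefore be shifted freely between the two summands of the decomposition.

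I would then set
\begin{equation*}
  \tilde{\Theta} \coloneqq \Theta + \sum_{k \in \mathcal{N}_U} \scalarproduct{c_0}{\e_k} \Theta_k \in H_{\sharp}^1(U), \qquad
  \tilde{c}_0 \coloneqq c_0 - \sum_{k \in \mathcal{N}_U} \scalarproduct{c_0}{\e_k} \e_k \in \C^3.
\end{equation*}
Since each $\Theta_k$ lies in $H_{\sharp}^1(U)$ and the sum is finite, $\tilde{\Theta}$ is again an admissible periodic potential. Using $\nabla \Theta_k = \e_k$ one checks directly that $\nabla \tilde{\Theta} + \tilde{c}_0 = \nabla \Theta + c_0 = u$ in $U$, so the pair $(\tilde{\Theta}, \tilde{c}_0)$ still represents $u$. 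Finally, for every $j \in \mathcal{N}_U$ the orthonormality $\scalarproduct{\e_k}{\e_j} = \delta_{kj}$ of the standard basis gives $\scalarproduct{\tilde{c}_0}{\e_j} = \scalarproduct{c_0}{\e_j} - \scalarproduct{c_0}{\e_j} = 0$, which is the desired vanishing of the $\mathcal{N}_U$-components.

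This argument is essentially bookkeeping; the substantive content sits entirely in Lemma~\ref{lemma:existence of potentials in case of no k-loop}. The only point that requires genuine care is the applicability of that lemma, namely the finiteness of the number of connected components of $U$; under that mild hypothesis the corollary follows. Beyond this I do not expect any real obstacle, since checking that $\tilde{\Theta}$ remains in $H_{\sharp}^1(U)$ is immediate from its being a finite $\C$-linear combination of elements of that space.
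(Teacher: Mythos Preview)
Your argument is correct and is essentially identical to the paper's proof: start from the simple-Helmholtz decomposition, invoke Lemma~\ref{lemma:existence of potentials in case of no k-loop} to write each $\e_k$ with $k\in\mathcal{N}_U$ as a periodic gradient, and absorb the corresponding components of the constant into the potential. Your remark that Lemma~\ref{lemma:existence of potentials in case of no k-loop} requires $U$ to have only finitely many connected components is apt; the paper uses the lemma in the same way without restating that hypothesis in the corollary.
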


\begin{proof}
  By the very definition of a Helmholtz domain, we find a potential $\Phi \in H_{\sharp}^1(U)$ and a constant $d_0 \in \C^3$ such that $u = \nabla \Phi + d_0$ in $U$. Due to Lemma~\ref{lemma:existence of potentials in case of no k-loop}, we find a potential $\Theta_k \in H_{\sharp}^1(U)$ such that $\nabla \Theta_k = \e_k$ for all $k \in \mathcal{N}_U$. Setting $\Theta \coloneqq \Phi + \sum_{k \in \mathcal{N}_U} \scalarproduct{d_0}{\e_k} \Theta_k$ provides us with an element of $H_{\sharp}^1(U)$. Moreover, 
  \begin{equation*}
    \nabla \Theta = u - d_0 + \sum_{k \in \mathcal{N}_U} \scalarproduct{d_0}{\e_k}\e_k = u - \sum_{k \in \mathcal{L}_U} \scalarproduct{d_0}{\e_k}\e_k\, .
  \end{equation*}
Setting $c_0 \coloneqq \sum_{k \in \mathcal{L}_U}
\scalarproduct{d_0}{\e_k}\e_k$, we find the desired representation of $u$.
\end{proof}

In the following, we need line-integrals of curl-free $L_{\sharp}^2(Y; \C^3)$-vector fields.
\begin{LemmaAndDefinition}[Line integral]\label{PropositonAndDefinition:line integral}
  Let $U \subset Y$ be such that $\iota(U)$ is an open subset of $\T^3$ with Lipschitz boundary. Assume that $\gamma \colon [0, 1] \to Y$ is such that $\iota \circ \gamma$ is a $k$-loop in $\iota(U)$. There exists a unique linear and continuous map
  \begin{equation*}
    I_{\gamma} \colon \set[\big]{u \in L_{\sharp}^2(Y; \C^3) \given \curl u = 0 \textrm{ in } U} \to \C \, , \quad u \mapsto I_{\gamma}(u)
  \end{equation*}
such that $I_{\gamma}(u)$ coincides with the usual line integral 
\begin{equation}\label{e:definition usual line integral}
  I_{\gamma}(u) = \lineint{u} \coloneqq \int_0^1 \scalarproduct[\big]{u\big(\gamma(t) \big)}{\dot{\gamma}(t)} \d t\, ,
\end{equation}
 for fields $u \in C_{\sharp}^1( Y; \C^3)$ that are curl free in $U$. 

The map $I_{\gamma}$ is called the \emph{line integral of $u$ along $\gamma$}, and we write, by abuse of notation, $\lineint{u}$ instead of $I_{\gamma}(u)$ for all $u \in L_{\sharp}^2(Y; \C^3)$ that are curl free in $U$. 
\end{LemmaAndDefinition}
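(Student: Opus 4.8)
The plan is to realize $I_\gamma$ as the continuous extension, to the closed subspace $V \coloneqq \set{u \in L_{\sharp}^2(Y;\C^3) \given \curl u = 0 \textrm{ in } U}$, of the ordinary line integral defined on the subset $D \coloneqq \set{u \in C_{\sharp}^1(Y;\C^3) \given \curl u = 0 \textrm{ in } U}$. The statement then splits into two tasks: an a priori estimate $\abs{\lineint{u}} \le C\,\norm{u}_{L^2(Y)}$ valid for every $u \in D$, and the density of $D$ in $V$ for the $L^2$-norm. Given both, the bounded-linear-transformation theorem produces the unique continuous linear extension, which is the desired $I_\gamma$.

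The a priori bound is the heart of the matter, and I would obtain it by averaging over translates of the loop. Since $\iota \circ \gamma$ is a $k$-loop, its image $K$ is compact in the open set $\iota(U)$, so there is $\rho > 0$ with $\dist(K, \T^3 \setminus \iota(U)) > \rho$; consequently the tube $\bigcup_{t \in [0,1]} B_\rho(\gamma(t))$ is contained in $U$. For $w$ in the ball $B \coloneqq B_\rho(0)$ set $\gamma_w \coloneqq \gamma + w$, again a $k$-loop, and note that $H(s,t) \coloneqq \gamma(t) + s\,w$ is a free homotopy of loops from $\gamma$ to $\gamma_w$ whose image stays inside $U$. For $u \in D$, applying Stokes' theorem to $H$ and using $\curl u = 0$ on the image of $H$ gives $\lineint{u} = \int_{\gamma_w} u$ for every $w \in B$, since the two lateral contributions of $\partial([0,1]^2)$ cancel.

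Averaging this identity over $w \in B$, using $\dot{\gamma}_w = \dot{\gamma}$, Fubini, and the substitution $y = \gamma(t) + w$, I arrive at the explicit formula
\[
  \lineint{u} = \frac{1}{\abs{B}}\int_0^1 \scalarproduct[\big]{\int_{B_\rho(\gamma(t))} u(y)\,\d y}{\dot{\gamma}(t)} \,\d t .
\]
Because each ball $B_\rho(\gamma(t))$ lies in $U$, the Cauchy--Schwarz inequality bounds the inner integral by $\abs{B}^{1/2}\norm{u}_{L^2(U)}$, whence $\abs{\lineint{u}} \le \abs{B}^{-1/2}\big(\int_0^1 \abs{\dot{\gamma}(t)}\,\d t\big)\,\norm{u}_{L^2(Y)} =: C\,\norm{u}_{L^2(Y)}$, with $C$ depending only on $\gamma$ and $U$. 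I would emphasise that the displayed formula makes sense for \emph{every} $u \in L_{\sharp}^2(Y;\C^3)$ and defines there a bounded linear functional; its restriction to $V$ is therefore already a linear, continuous candidate for $I_\gamma$ that coincides with $\lineint{\cdot}$ on $D$, settling existence.

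The remaining and genuinely technical obstacle is \emph{uniqueness}, which needs $D$ to be dense in $V$ in the $L^2(Y)$-topology; here the Lipschitz hypothesis on $\iota(U)$ enters. I would argue as follows: on the Lipschitz set $\iota(U)$ (finitely many components, finite-dimensional first de~Rham cohomology) every $L^2$ field with $\curl = 0$ decomposes as $\nabla p + h$ with $p \in H_{\sharp}^1(U)$ and $h$ a smooth representative of a cohomology class, and density of $C^\infty(\overline{\iota(U)})$ in $H^1$ on Lipschitz domains yields smooth curl-free approximants of $u|_U$ up to the boundary. These are then glued to a mollification of $u$ on $Y \setminus \overline{U}$ by a partition of unity; since the patching region lies in the complement of $U$, where no constraint is imposed, curl-freeness in $U$ is preserved and the global fields converge to $u$ in $L^2(Y)$. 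With density in hand, the bounded-linear-transformation theorem delivers a unique continuous linear $I_\gamma$ extending the line integral, completing the proof. The translation-averaging estimate is the key new idea; the density step is where I expect the bookkeeping to be most delicate.
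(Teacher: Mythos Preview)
Your argument is correct and shares the overall skeleton with the paper's sketch (there labeled ``Idea of a proof''): continuity of $\lineint{\cdot}$ on the smooth subspace $D$, density of $D$ in $V$, then extension by the bounded-linear-transformation theorem. The execution differs in both steps.

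For continuity, the paper offers only the parenthetical ``because $u$ is curl free in $U$''. Your translation-averaging trick supplies an explicit proof and yields the concrete representation $I_\gamma(u)=\abs{B}^{-1}\int_0^1\scalarproduct{\int_{B_\rho(\gamma(t))}u}{\dot\gamma(t)}\,\d t$, which you rightly note is already a bounded linear functional on all of $L^2_\sharp(Y;\C^3)$ agreeing with the classical line integral on $D$---so existence is settled without invoking density at all. This is more informative than the paper's sketch: it gives a formula for $I_\gamma$ and an explicit constant $C=\abs{B}^{-1/2}L(\gamma)$.

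For density (needed only for uniqueness in your setup), the paper suggests plain mollification $u_\eta=u*\rho_\eta$. As written this is too optimistic: $\curl u_\eta=(\curl u)*\rho_\eta$ vanishes only on the $\eta$-shrunk subset of $U$, not on all of $U$, so $u_\eta\notin D$ in general. Your route via a Hodge-type splitting $u|_U=\nabla p+h$ on the Lipschitz set $\iota(U)$, followed by $H^1$-approximation of $p$ up to the boundary and gluing by a cutoff that is identically $1$ on $U$, is the correct repair; the observation that the transition region lies entirely in $Y\setminus U$, where no constraint is imposed, is exactly what keeps curl-freeness in $U$ intact.

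In short: same strategy, but you fill in both gaps the paper leaves open, and the averaging formula is a genuine addition rather than just a detail.
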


\begin{proof}[Idea of a proof]
  The space $V \coloneqq \set{u \in C^1_{\sharp}(Y ; \C^3) \given \curl u = 0 \textrm{ in } U}$ is dense in $X \coloneqq \set{u \in L_{\sharp}^2(Y ; \C^3) \given \curl u = 0 \textrm{ in } U}$, which can be shown with a sequence of mollifiers $(\rho_{\eta})_{\eta}$. Defining $u_{\eta}$ by $u_{\eta} \coloneqq u * \rho_{\eta}$ provides us with a family $(u_{\eta})_{\eta}$ in $V$ with $u_{\eta} \to u$ in $L_{\sharp}^2(Y; \C^3)$. The map $\tilde{I}_{\gamma} \colon V \to \C$ defined by $\tilde{I}_{\gamma}(u) \coloneqq \lineint{u}$ is linear and continuous (because $u$ is curl free in $U$); using density of $V$ in $X$, the claim follows.
\end{proof}

We note the following: If $\gamma$ is a $k$-loop in $U$ and
$\tilde{\gamma}$ is one of its lifts, then
$\lineint{(u \circ \iota)} = \int_{\tilde{\gamma}} u$ for all fields
$u \in C_{\sharp}^1(U; \C^3)$. Indeed,
$\dot{\tilde{\gamma}} = \dot{\gamma}$, and
$u \circ \iota^{-1} \circ \gamma = u \circ \tilde{\gamma}$ for a
periodic function $u$. Note that the line integral along $\gamma$ and
the line integral along $\tilde{\gamma}$ coincide.

\subsection{The geometric average}
The notion of a geometric average was first introduced by Bouchitté, Bourel, and Felbacq in~\cite {BouchitteBourel2009}. The notion turned out to be very useful, it was extended in~\cite {Lipton-Schweizer} to more general geometries. Although we focus on simple Helmholtz domains in the main part of this work, we define the geometric average for general geometries.

In the subsequent definition of a geometric average, we need three
special curves $\gamma_1, \gamma_2, \gamma_3 \colon  [0, 1] \to Y$,
which represent paths along the edges---that is, $\gamma_1(t)
\coloneqq (t - 1/2, -1/2, -1/2)$, $\gamma_{2}(t) \coloneqq (-1/2,
t-1/2, -1/2)$, and $\gamma_3(t) \coloneqq (-1/2, -1/2, t-1/2)$. We
furthermore use the index set $\calL_U$ defined in~\eqref{eq:index sets for H 1}.
\begin{figure}%
  \centering
  \begin{subfigure}[b]{0.49\linewidth}
    \hspace{1cm}
  \begin{tikzpicture}[scale=3]
    \draw[->] (-.6, -.6) -- (-.4, -.6);
  \draw[->] (-.6, -.6) -- (-.6, -.4);

  \coordinate (A) at (-0.5, -0.5);
  \coordinate (B) at (0.5,-0.5);
  \coordinate (C) at (0.5,0.5);
  \coordinate (D) at (-0.5, 0.5);
  
 \coordinate (E) at (-.5, -.1);
 \coordinate (F) at (.1, .5);
 \coordinate (G) at (-.5, .1);
 \coordinate (H) at (-.1, .5);
 \coordinate (I) at (0.1, -.5);
 \coordinate (J) at (.5, -.1);
 \coordinate (K) at (.5, .1);
 \coordinate (L) at (-.1,-.5);

  \draw[black]  (A) -- (B) -- (C) -- (D) -- cycle;
  \fill[gray!20!white] (A) -- (B) -- (C) -- (D) -- cycle;
  
  \draw[black] (E) to [out=0, in=270] (F) to (H) to [out=270, in=0] (G);
  \fill[gray!75!white](E) to [out=0, in=270] (F) to (H) to [out=270, in=0] (G);
  \draw[black] (I) to [out=90, in=180] (J) to (K) to [out=180, in=90] (L);
  \fill[gray!75!white](I) to [out=90, in=180] (J) to (K) to [out=180, in=90] (L);

  \node[] at (-.3, -.6) {$e_k$};
  \node[] at (-.6, -.3) {$e_m$};
        \node[] at (-.7, 0) {$\Gamma_k^{(l)}$};
        \node[] at (.7, 0) {$\Gamma_k^{(r)}$};
        \node[] at (-.09, .2) {$U_1$};
        \node[] at (.07, -.25) {$U_2$};
  \end{tikzpicture}
   \caption{ }
   \label{fig:k-loop althoug Sigma not connected}
  \end{subfigure}
  \begin{subfigure}[b]{0.49\linewidth}
    \hspace{1.3cm}
    \begin{tikzpicture}[scale = 3]
  \draw[->] (-.6, -.6) -- (-.4, -.6);
  \draw[->] (-.6, -.6) -- (-.6, -.4);

  \coordinate (A) at (-0.5, -0.5);
  \coordinate (B) at (0.5,-0.5);
  \coordinate (C) at (0.5,0.5);
  \coordinate (D) at (-0.5, 0.5);
  
 \coordinate (E) at (-.5,-.4);
 \coordinate (F) at (.5,.1);
 \coordinate (G) at (.5, .4);
 \coordinate (H) at (-.5, -.1);
 \coordinate (I) at (-.5, .1);
 \coordinate (J) at (-.5, .1);
 \coordinate (K) at (.25, .5);
 \coordinate (L) at (-.25, .5);
 \coordinate (M) at (-.5, .4);
 \coordinate (N) at (-.25, -.5);
 \coordinate (O) at (.25, -.5);
 \coordinate (P) at (.5, -.4);          
 \coordinate (P1) at (.3, -.35);
 \coordinate (Q) at (.5, -.1);

  \draw[black]  (A) -- (B) -- (C) -- (D) -- cycle;
  \fill[gray!20!white] (A) -- (B) -- (C) -- (D) -- cycle;
  
 \draw[black] (E)--(F) -- (G) -- (H) -- cycle;
 \fill[gray!75!white] (E)--(F) -- (G) -- (H) -- cycle;
 \draw[black] (J) --(K) -- (L) -- (M) -- cycle;
 \fill[gray!75!white] (J) --(K) -- (L) -- (M) -- cycle;
  \draw[black] (N) to (Q);
 \fill[gray!75!white] (N) to (O);
  \draw[black] (P) -- (O) -- (N) -- (Q) -- cycle;
  \fill[gray!75!white] (P) -- (O) -- (N) -- (Q) -- cycle;

  \node[] at (-.3, -.6) {$e_k$};
  \node[] at (-.6, -.3) {$e_m$};
  \node[] at (-.25, .35) {$U_2$};
  \node[] at (-.09, -.05) {$U_1$};
  \node[] at (.2, -.4) {$U_3$};
\end{tikzpicture}
\caption{ }
\label{fig:there is a discontinuous k-loop}
  \end{subfigure}
\caption{ (a) There is a $k$-loop in $U =U_1
    \cup U_2$ connecting $\Gamma_k^{(l)}$ and $\Gamma_k^{(r)}$ although $U = U_1 \cup U_2$ is not
    connected. (b) There is $k$-loop $\gamma$ in $U$. Note that
    $\lineint{u} = 3 (\oint u)_k$ for all fields $u \in L_{\sharp}^2(Y
    ; \C^3)$
  that are curl-free in $U$.}
\end{figure}
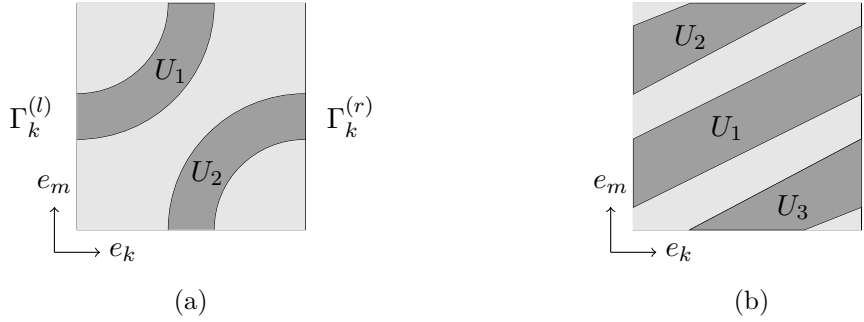

\begin{definition}[Geometric average]\label{definition:geometric average}
 Let $U \subset Y$ be non-empty and such that $\iota(U) \subset \T^3$
 is open. Assume $u \colon U \to \C^3$ is an $L_{\sharp}^2(U;
 \C^3)$-vector field that is curl free in $\iota(U)$. We define its \emph{geometic average} $\oint u \in \C^3$ as follows:
 \begin{enumerate}
 \item[1)] If $U$ is a simple Helmholtz domain, then the vector field
   $u$ can be written as $u = \nabla \Theta + c_0$, where $\Theta \in
   H_{\sharp}^1(U)$ and $c_0 \in \C^3$. In this case, for $k \in
   \set{1,2,3}$, we set 
   \begin{equation*}
     \bigg( \oint u \bigg)_k \coloneqq
     \begin{cases}
       \scalarproduct{c_0}{\e_k} & \textrm{ for } k \in \mathcal{L}_U \, , \\
       0 & \textrm{ otherwise}\, .
     \end{cases}
     \end{equation*}
\item[2)] If for all $k \in \set{1,2,3}$ the path $\gamma_k$ along the
  edge is a $k$-loop in $U$, then, for $k \in \set{1,2,3}$, we set
  \begin{equation*}
    \bigg( \oint u \bigg)_k \coloneqq \int_{\gamma_k} u \, .
  \end{equation*}
 \end{enumerate}
\end{definition}

In later sections, we consider fields $u \colon Y \to \C^3$ that are curl free in $\iota(U)$ and vanish in $Y \setminus \overline{U}$, where $U \subset Y$ is non empty and proper. To define the geometric average of those fields, we restrict $u$ to the subset $U$ and apply the above definition.

\begin{remark}[The geometric average is well defined]
\textit{a)} Let $U$ be a simple Helmholtz domain. Fix $k \in \mathcal{L}_U$, and let $\gamma \colon [0,1]
\to \T^3$ be a $k$-loop in $\iota(U)$. Using that $u = \nabla \Theta + c_0$ in
$U$ as well as the periodicity of $\Theta$, we find that
\begin{equation*}
  \lineint{u} = \lineint{c_0} = \scalarproduct{c_0}{\e_k}
  \scalarproduct{\tilde{\gamma}(1) - \tilde{\gamma}(0)}{\e_k}\, ,
\end{equation*}
and hence the number $\scalarproduct{c_0}{\e_k}$ in \textit{1)} is well defined (despite our observation in Remark~\ref{remark:constant in Helmholtz domain is not unique}).

\textit{b)} The two definitions \textit{1)} and \textit{2)} coincide
when both can be applied. To see this, fix an index $k \in
\set{1,2,3}$. The domain $U$ is a simple Helmholtz domain, and hence
we find a potential $\Theta \in H_{\sharp}^1(U)$ and a constant $c_0
\in \C^3$ such that $u = \nabla \Theta + c_0$ in $U$. We may assume
that $\Theta \in C_{\sharp}^1(U)$; otherwise we approximate by smooth
functions. We then find, for the path $\gamma_k$ along the edge, that
\begin{equation*}
  \int_{\gamma_k}u 
  = \int_{\gamma_k} \nabla \Theta +
  \scalarproduct{c_0}{\e_k} 
  = \Theta(\gamma_k(1)) - \Theta(\gamma_k(0)) + \scalarproduct{c_0}{\e_k}
  = \scalarproduct{c_0}{\e_k}\, .
\end{equation*}
\end{remark}

\begin{remark}
  \textit{a)} Let $U$ be a simple Helmholtz domain, and let $\gamma$
  be a $k$-loop in $U$. We remark that $(\oint u)_k \neq \lineint{u}$
  in general. To give an example, let $\tilde{\gamma}$ be a lift of
  the $k$-loop $\gamma$ that travels the distance $2$ in the $k$th
  direction, that is,
  $\scalarproduct{\tilde{\gamma}(1) - \tilde{\gamma}(0)}{\e_k} = 2$
  (as in Figure \ref{fig:there is a discontinuous k-loop}). In this case,
   $\lineint{u} = 2 (\oint u)_k$. Nevertheless, there always holds
  $\lineint{u} = \lambda_k (\oint u)_k$ for
  $\lambda_k \in \Z \setminus \set{0}$.

  \textit{b)} There are domains for which definition \textit{1)} can be applied, but definition \textit{2)} cannot be used. Indeed, $U \coloneqq B_r(0)$ is a simple Helmholtz domain for $r \in (0, 1/2)$. However, $\gamma_k(t) \notin U$ for all $t \in [0, 1]$ and all $k \in \set{1,2,3}$.

\textit{c)} On the other hand, choosing $Y \setminus U$ to be a $3$-dimensional full torus $\mathbb{S}^1 \times \mathbb{D}^2 \subset \subset Y$, we find that $\gamma_1$, $\gamma_2$, and $\gamma_3$ are $1$-, $2$-, and $3$-loops in $U$, respectively. The domain $U$, however, is not simple Helmholtz.
\end{remark}
\textbf{Properties of the geometric average.} We introduce the function space 
\begin{equation}\label{eq:definition of X}
  X \coloneqq \set{ \fhi \in L_{\sharp}^2(Y ; \C^3) \given \fhi = 0
    \textrm{ in } Y \setminus \overline{U}  \textrm{ and } \div \fhi = 0 \textrm{ in } Y }\, .
\end{equation}
For a simple Helmholtz domain $U$ and under slightly stronger assumptions on the vector field $u \colon Y \to \C^3$, Bouchitt\'e, Bourel, and Felbacq, \cite {BouchitteBourel2009}, define the geometric average $\oint u$ by the identity~\eqref{eq:identity for geometric average} below. 
We show that our definition and theirs agree when both can be applied.

\begin{lemma}\label{lemma:both notions of geometric average coincide}
  Let $U \subset Y$ be a simple Helmholtz domain. If $u \colon Y \to \C^3$ is a vector field of class $L_{\sharp}^2(Y; \C^3)$ that is curl free in $U$, then the identity
  \begin{equation}\label{eq:identity for geometric average}
    \int_Y \scalarproduct{u}{\fhi} = \scalarproduct[\bigg]{\oint u}{\int_Y \fhi} 
  \end{equation}
  holds for all $\fhi \in X$, where the function space $X$ is defined in~\eqref{eq:definition of X}.
\end{lemma}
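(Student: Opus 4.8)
The plan is to reduce everything to the Helmholtz representation of $u$ on $U$ and to exploit that $\fhi$ is divergence free on the whole torus. First I would combine the hypothesis that $U$ is a simple Helmholtz domain with Corollary~\ref{corollary:helmholtz decomposition with vanishing components of c_0}: since $\curl u = 0$ in $U$, there are a potential $\Theta \in H_{\sharp}^1(U)$ and a constant $c_0 \in \C^3$ with $u = \nabla \Theta + c_0$ in $U$ and $\scalarproduct{c_0}{\e_k} = 0$ for every $k \in \mathcal{N}_U$. With this particular normalisation, the defining formula of Definition~\ref{definition:geometric average}, case~1), reads $(\oint u)_k = \scalarproduct{c_0}{\e_k}$ for $k \in \mathcal{L}_U$ and $(\oint u)_k = 0 = \scalarproduct{c_0}{\e_k}$ for $k \in \mathcal{N}_U$; hence $\oint u = c_0$ as vectors in $\C^3$. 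This identification is exactly the place where I need the corollary rather than an arbitrary Helmholtz representation.

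Next I would fix $\fhi \in X$. Because $\fhi = 0$ in $Y \setminus \overline{U}$ and $\partial U$ is a null set, we have $\int_Y \scalarproduct{u}{\fhi} = \int_U \scalarproduct{u}{\fhi}$, and inserting the decomposition splits this into a gradient part $\int_U \scalarproduct{\nabla \Theta}{\fhi}$ and a constant part $\int_U \scalarproduct{c_0}{\fhi}$. The constant part is immediate: pulling the constant $c_0$ out of the integral and using $\fhi = 0$ outside $\overline{U}$ once more yields $\int_U \scalarproduct{c_0}{\fhi} = \scalarproduct{c_0}{\int_U \fhi} = \scalarproduct{\oint u}{\int_Y \fhi}$, which is precisely the right-hand side of the claimed identity. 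It therefore remains to show that the gradient part vanishes.

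The vanishing of $\int_U \scalarproduct{\nabla \Theta}{\fhi}$ is the crux of the argument. I would use the Lipschitz regularity of $\partial U$ to extend $\Theta$ to some $\tilde{\Theta} \in H_{\sharp}^1(Y)$; then $\nabla \tilde{\Theta} = \nabla \Theta$ almost everywhere on $U$, and since $\fhi$ is supported in $\overline{U}$ this gives $\int_U \scalarproduct{\nabla \Theta}{\fhi} = \int_Y \scalarproduct{\nabla \tilde{\Theta}}{\fhi}$. The decisive hypothesis is now that $\div \fhi = 0$ holds in all of $Y$ (equivalently on $\T^3$), not merely in $U$: by the weak formulation of the divergence on the torus this means $\int_Y \scalarproduct{\nabla \psi}{\fhi} = 0$ for every $\psi \in H_{\sharp}^1(Y)$, so choosing $\psi = \tilde{\Theta}$ forces $\int_Y \scalarproduct{\nabla \tilde{\Theta}}{\fhi} = 0$. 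The main obstacle is exactly this integration by parts: if the divergence condition were imposed only inside $U$, there would remain a boundary contribution of the form $\int_{\partial U} \Theta\, \scalarproduct{\fhi}{n}$ (and $\div \fhi$ could moreover carry a surface measure along $\partial U$), so it is essential that the global condition $\div \fhi = 0$ in $Y$ encodes the vanishing of the normal trace of $\fhi$ across $\partial U$ and thereby removes every boundary term. Combining the two computations yields the asserted identity, and the conclusion is manifestly independent of the chosen extension because the gradient term vanishes outright.
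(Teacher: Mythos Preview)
Your proof is correct and follows essentially the same route as the paper: invoke Corollary~\ref{corollary:helmholtz decomposition with vanishing components of c_0} to write $u=\nabla\Theta+c_0$ on $U$ with $c_0=\oint u$, split $\int_Y\scalarproduct{u}{\fhi}$ into a gradient part and a constant part, and use that $\div\fhi=0$ holds on all of $Y$ to kill the gradient part. The only cosmetic difference is that the paper phrases the last step as an integration by parts on $U$ (using $X\subset H_\sharp(\div,Y)$ and the vanishing normal trace of $\fhi$ on $\partial U$), whereas you extend $\Theta$ to $\tilde\Theta\in H^1_\sharp(Y)$ and test the distributional identity $\div\fhi=0$ against $\tilde\Theta$; both manoeuvres rely on the same Lipschitz regularity of $\partial U$ that is implicit in the paper's standing assumptions on $\Sigma$.
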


\begin{proof}
  As $U$ is a simple Helmholtz domain,  by Corollary~\ref{corollary:helmholtz decomposition with vanishing components of c_0}, we find a potential $\Theta \in H_{\sharp}^1(U)$ and a constant $c_0 \in \C^3$ such that $u = \nabla \Theta + c_0$ in $U$ and $\scalarproduct{c_0}{\e_k} = 0$ for all $k \in \mathcal{N}_U$. Substituting this decomposition into the left-hand side of~\eqref{eq:identity for geometric average}, we find that
  \begin{equation*}
    \int_Y \scalarproduct{u}{\fhi} = \int_U \scalarproduct{\nabla \Theta}{\fhi} + \int_U \scalarproduct{c_0}{\fhi} = \int_U \scalarproduct{\nabla \Theta}{\fhi} + \sum_{k \in \mathcal{L}_U} \scalarproduct{c_0}{\e_k}  \scalarproduct[\bigg]{\e_k}{\int_Y \fhi}\, .
  \end{equation*}
 Note that the function space $X$ is a subset of $H_{\sharp}(\div, Y)$. We are thus allowed to use integration by parts in the first integral on the right-hand side. Using that $\fhi$ is divergence free in $Y$ and vanishes in $Y \setminus \overline{U}$, we find that
 \begin{equation*}
   \int_Y \scalarproduct{u}{\fhi} = \sum_{k \in \mathcal{L}_U}
   \scalarproduct{c_0}{\e_k} \scalarproduct[\bigg]{\e_k}{\int_Y \fhi}
   = \scalarproduct[\bigg]{\oint u}{ \int_Y \fhi}\, ,
 \end{equation*}
and hence the claimed equality holds.
\end{proof}

When we derive effective equations, we need the following property of
the geometric average, which is a consequence of the
Lemma~\ref{lemma:both notions of geometric average coincide}.

\begin{corollary}\label{corollary:property of geometric average}
  Assume that $U \subset Y$ is a simple Helmholtz domain, and let $u \colon Y \to \C^3$ be a vector field of class $L_{\sharp}^2(Y ; \C^3)$ that is curl free in $U$. If $E \colon Y \to \C^3$ is another field of class $L_{\sharp}^2(Y; \C^3)$ that is curl free and that vanishes in $Y \setminus \overline{U}$, then 
  \begin{equation}\label{eq:property of the geometric average}
    \int_{U} (u \wedge E) = \bigg( \oint u \bigg) \wedge \bigg( \int_{Y} E\bigg) \, .
  \end{equation}
\end{corollary}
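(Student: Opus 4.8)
The plan is to reduce the vector identity~\eqref{eq:property of the geometric average} to the scalar identity already supplied by Lemma~\ref{lemma:both notions of geometric average coincide}, one component at a time. The algebraic observation that drives everything is that, for the (bilinear) cross product and any fixed $i \in \set{1,2,3}$, the $i$-th component of a cross product can be written as a scalar product: a direct computation gives $(a \wedge b)_i = \scalarproduct{a}{b \wedge \e_i}$ for all $a, b \in \C^3$. Applying this with $a = u$, $b = E$ rewrites the $i$-th component of the left-hand side as $\int_U \scalarproduct{u}{E \wedge \e_i}$, and applying it with $a = \oint u$, $b = \int_Y E$ rewrites the $i$-th component of the right-hand side as $\scalarproduct{\oint u}{(\int_Y E) \wedge \e_i}$. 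It therefore suffices to prove, for each $i$, the scalar equality $\int_U \scalarproduct{u}{E \wedge \e_i} = \scalarproduct{\oint u}{(\int_Y E) \wedge \e_i}$.

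To obtain this from the Lemma, first I would set $\fhi_i \coloneqq E \wedge \e_i$ and verify that $\fhi_i \in X$, with $X$ as in~\eqref{eq:definition of X}. Since $E$ vanishes in $Y \setminus \overline{U}$, so does $\fhi_i$; this is the first defining property of $X$, and it also allows me to replace $\int_U$ by $\int_Y$ in the integrand $\scalarproduct{u}{\fhi_i} = (u \wedge E)_i$. For the divergence-free property I use the product rule $\div(E \wedge \e_i) = \scalarproduct{\curl E}{\e_i} - \scalarproduct{E}{\curl \e_i}$; since $\e_i$ is constant and $E$ is curl free in $Y$, both terms vanish and $\div \fhi_i = 0$ in $Y$. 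Hence $\fhi_i \in X$.

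With this membership in hand, Lemma~\ref{lemma:both notions of geometric average coincide} applied to $\fhi_i$ gives $\int_Y \scalarproduct{u}{\fhi_i} = \scalarproduct{\oint u}{\int_Y \fhi_i}$. Because the constant $\e_i$ passes through the integral, $\int_Y \fhi_i = (\int_Y E) \wedge \e_i$, so this is precisely the scalar equality needed above; assembling the three components reconstructs~\eqref{eq:property of the geometric average}. The only genuinely delicate point is the verification $\fhi_i \in X$: the vanishing outside $\overline{U}$ is immediate, but the divergence-free condition has to be read as a distributional identity on all of $Y$, so I would justify the product rule for $L_{\sharp}^2(Y;\C^3)$-fields with vanishing distributional curl, for instance by approximating $E$ by smooth curl-free fields or by testing $\div(E \wedge \e_i)$ against scalars in $C_{\sharp}^{\infty}(Y)$ and integrating by parts. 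The remaining steps are the purely algebraic cross-product identity and the direct invocation of the Lemma.
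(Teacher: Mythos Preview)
Your proof is correct and follows essentially the same route as the paper's: fix a component $k$, set $\fhi \coloneqq E \wedge \e_k$, verify $\fhi \in X$ (with the divergence-free condition established distributionally by testing against $\phi \in C_c^{\infty}(Y)$ and using that $\curl E = 0$ in $Y$), apply Lemma~\ref{lemma:both notions of geometric average coincide}, and pull the constant $\e_k$ through the integral. The paper carries out the distributional verification of $\div \fhi = 0$ exactly via the testing argument you mention in your last paragraph.
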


\begin{proof}
 Fix $k \in \set{1,2,3}$. Defining the field $ \fhi \colon Y \to \C^3$ by $\fhi \coloneqq E \wedge \e_k$ provides us with an element of $X$; indeed, $\fhi$ is of class $L_{\sharp}^2(Y ; \C^3)$ and vanishes in $Y \setminus \overline{U}$. Moreover, for every $\phi \in C_c^{\infty}(Y)$, we calculate that
\begin{equation*}
  \int_Y \scalarproduct{\fhi}{\nabla \phi} 
  = - \int_Y \scalarproduct{E}{\nabla \phi \wedge \e_k} 
  =  - \int_Y \scalarproduct{E}{ \curl (\phi \, \e_k)} = 0.
\end{equation*}
That is, $\fhi$ is divergence free in $Y$. We can thus apply Lemma~\ref{lemma:both notions of geometric average coincide} and obtain that
\begin{equation*}
  \int_{U} \scalarproduct{u \wedge E}{\e_k} 
  =  \int_{Y} \scalarproduct{u}{\fhi} 
  =  \scalarproduct[\bigg]{\oint u}{ \int_Y( E \wedge \e_k)}
  =  \scalarproduct[\bigg]{\oint u}{ \bigg(\int_Y E\bigg) \wedge \e_k}\, .
\end{equation*}
As $k \in \set{1,2,3}$ was chosen arbitrarily, this yields~\eqref{eq:property of the geometric average}.
\end{proof}

\begin{remark}
  The statement of the corollary remains true when we replace the assumption on $U$ to be a simple Helmholtz domain by the assumption that $\overline{U} \cap \del Y = \emptyset$. The geometric average $\oint u$ is then given by the second part of Definition~\ref{definition:geometric average}. A proof of the corollary in this situation is given in~\cite {Lipton-Schweizer}.
\end{remark}

\section{Cell problems and their analysis}\label{sec:cell problems}

In this section, we study sequences $(\Eeta)_{\eta}$ and
$(\Heta)_{\eta}$ of solutions to~\eqref{eq:MaxSysSeq-intro} and their two-scale limits $E_0$
and $H_0$. 
\subsection{Cell problem for $E_0$}
\begin{lemma}[Cell problem for $E_0$]\label{lemma:cell problem E_0}
  Let  $R \subset \subset \Omega \subset \R^3$ and $\Sigma \subset
  Y$ be as in Section~\ref{section:geometry and assumptions}, and let
  $(\Eeta, \Heta)_{\eta}$ be a sequence of solutions
  to~\eqref{eq:MaxSysSeq-intro} that satisfies the
  energy-bound~\eqref{eq:energy-bound}. The
  two-scale limit  $E_0 \in L^2(\Omega \times Y; \C^3)$ satisfies the following:
\begin{enumerate}
  \item[i)]
 For almost all $x \in R$ the field $E_0 = E_0(x, \cdot)$ is an element of $ H_{\sharp}(\curl, Y)$ and a distributional solution to
\begin{subequations}\label{eq:cell problem E}
  \begin{align}
    \curly E_0 &= 0 \quad \textrm{ in } Y\, , \label{eq:cell problem E 1}\\
    \divy E_0  &= 0 \quad \textrm{ in } Y\setminus \overline{\Sigma}\, , \label{eq:cell problem E 2}\\
           E_0 &= 0 \quad \textrm{ in } \Sigma\, . \label{eq:cell problem E 3}
  \end{align}
\end{subequations}
Outside the meta-material, the two-scale limit $E_0$ is
$y$-independent; that is,  $E_0(x, y) = E_0(x) = E(x)$ for a.e. $x
\in \Omega \setminus R$ and a.e. $y \in Y$.
\item[ii)]
Given $c \in \C^3$, there exists at most one solution $u
\in L_{\sharp}^2(Y; \C^3)$ to~\eqref{eq:cell problem E} with
cell-average $\meanint_{Y}u(y) \d y = c$.
\end{enumerate}
\end{lemma}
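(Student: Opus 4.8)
The plan is to pass to the two-scale limit in the three lines of~\eqref{eq:MaxSysSeq-intro} one at a time, and then to read off uniqueness from a Hodge argument on the flat torus. A preliminary observation drives everything: although~\eqref{eq:MaxSeq2-intro} holds only in the void, the assumption $\Eeta\in H^1(\Omega;\C^3)$ together with $\Eeta=0$ on $\Sigma_\eta$ forces the trace of $\Eeta$ on $\del\Sigma_\eta$ to vanish, so no surface charges can occur. Concretely, taking the divergence of~\eqref{eq:MaxSeq2-intro} gives $\div\Eeta=0$ in $\Omega\setminus\overline{\Sigma}_\eta$; for any $\phi\in C_c^\infty(\Omega)$ one then has $-\int_\Omega\scalarproduct{\Eeta}{\nabla\phi}=-\int_{\Omega\setminus\overline{\Sigma}_\eta}\scalarproduct{\Eeta}{\nabla\phi}$, and an integration by parts in the void—in which the bulk term drops by $\div\Eeta=0$ and the boundary term on $\del\Sigma_\eta$ drops because the normal trace of $\Eeta$ vanishes—upgrades this to $\div\Eeta=0$ in all of $\Omega$.

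Next I would insert oscillating test functions. For~\eqref{eq:MaxSeq1-intro}, pairing $\curl\Eeta=\i\omega\mu_0\Heta$ (bounded in $L^2$ by~\eqref{eq:energy-bound}) with $\eta\,\psi(x,x/\eta)$, $\psi\in C_c^\infty(\Omega;C_\sharp^\infty(Y;\C^3))$, and integrating by parts, one uses that $\curl$ applied to $\eta\,\psi(x,x/\eta)$ produces $(\curly\psi)(x,x/\eta)$ plus a remainder of order $\eta$; every term carrying a factor $\eta$ vanishes as $\eta\to0$, while two-scale convergence~\eqref{eq:(Eeta, Heta) converges in two scale to (E 0, H 0)} leaves $\int_\Omega\int_Y\scalarproduct{E_0}{\curly\psi}=0$. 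Hence $\curly E_0=0$ in $Y$ and $E_0(x,\cdot)\in H_\sharp(\curl,Y)$ for a.e.\ $x$. The same computation with the scalar field $\eta\,\phi(x,x/\eta)$, now starting from $\div\Eeta=0$ in all of $\Omega$, yields $\int_\Omega\int_Y\scalarproduct{E_0}{\nabla_y\phi}=0$ for every $\phi$, i.e.\ $\divy E_0=0$ in $Y$, and in particular in $Y\setminus\overline{\Sigma}$ as claimed. Finally, choosing $\psi$ supported in $R\times\Sigma$ makes $\psi(x,x/\eta)$ supported, for small $\eta$, inside $\Sigma_\eta$ where $\Eeta=0$; passing to the limit gives $\int_\Omega\int_Y\scalarproduct{E_0}{\psi}=0$, that is $E_0=0$ on $R\times\Sigma$.

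For the region outside the meta-material I would note that the three arguments above are purely local in $x$ and that no inclusions are present for $x\in\Omega\setminus R$; thus $\curly E_0=0$ and $\divy E_0=0$ hold in all of $Y$ for a.e.\ such $x$. A periodic $L^2$-field on $\T^3$ that is simultaneously curl- and divergence-free carries only its zero Fourier mode, hence is constant in $y$; therefore $E_0(x,\cdot)\equiv\meanint_Y E_0(x,y)\d y$. Since the $y$-average of a two-scale limit coincides with the weak $L^2$-limit, this constant equals $E(x)$, which is exactly the assertion $E_0(x,y)=E(x)$ on $(\Omega\setminus R)\times Y$.

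For part ii) let $u_1,u_2$ solve~\eqref{eq:cell problem E} with common cell-average $c$ and put $w\coloneqq u_1-u_2$, so that $w$ is curl-free and (by the strengthening in the first paragraph) divergence-free on the whole torus with $\meanint_Y w=0$. Since $\curly w=0$ and the full torus is a simple Helmholtz domain with $\mathcal{L}_Y=\set{1,2,3}$, one may write $w=\nabla_y\Theta+c_0$ with $\Theta\in H_\sharp^1(Y)$ and $c_0\in\C^3$; as $\int_Y\nabla_y\Theta=0$, the constant satisfies $c_0=\meanint_Y w=0$, whence $w=\nabla_y\Theta$. Then $\divy w=\Delta_y\Theta=0$ on $\T^3$ forces $\Theta$ to be constant, so $w=0$ and $u_1=u_2$. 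The step I expect to be the real obstacle is precisely the divergence-across-$\del\Sigma$ strengthening: uniqueness genuinely fails if one knows $\divy E_0=0$ only in the open void (two disjoint balls $\Sigma$ already support a nonzero, zero-average, curl-free field that vanishes on $\Sigma$ and is divergence-free off $\overline{\Sigma}$), and it is the $H^1$-regularity of $\Eeta$ together with its vanishing on the inclusions—forcing the normal component of $E_0$ across $\del\Sigma$ to vanish—that makes the torus Hodge argument conclusive.
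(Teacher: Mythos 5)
Your part i) is sound and is essentially the paper's own argument: oscillating test functions $\eta\,\fhi(x,x/\eta)$ for \eqref{eq:cell problem E 1} and \eqref{eq:cell problem E 2}, localisation of test functions in $R\times\Sigma$ for \eqref{eq:cell problem E 3}, and constancy of simultaneously curl- and divergence-free periodic fields for $x\in\Omega\setminus R$. Your preliminary upgrade $\div \Eeta = 0$ in all of $\Omega$ (via the vanishing $H^1$-trace of $\Eeta$ on $\del\Sigma_\eta$) is also correct under the paper's standing assumption $\Eeta\in H^1(\Omega;\C^3)$, and it yields $\divy E_0 = 0$ across $\del\Sigma$ --- more than \eqref{eq:cell problem E 2} asserts. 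The genuine gap is in part ii). The statement quantifies over \emph{arbitrary} $u\in L_{\sharp}^2(Y;\C^3)$ solving \eqref{eq:cell problem E}, and such a $u$ is divergence-free only in $Y\setminus\overline{\Sigma}$. Your parenthetical step \enquote{by the strengthening in the first paragraph, $w$ is divergence-free on the whole torus} transfers a property of the two-scale limit $E_0$ to the arbitrary solutions $u_1,u_2$; that transfer is unjustified, so your Hodge argument proves uniqueness only in a strictly smaller class than the one in the statement. This weaker uniqueness would moreover not suffice where the lemma is used: in Proposition~\ref{proposition: dimension of solution space to the cell problem of E}, the field $v = u - \sum_{k\in\NE}\alpha_k E^k$ is known only to solve \eqref{eq:cell problem E}, not to be divergence-free across $\del\Sigma$.

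Your diagnosis that uniqueness \enquote{genuinely fails} without a normal-trace condition is also incorrect when $\iota(\Sigma)$ is connected, and the paper's proof shows why: since \eqref{eq:cell problem E 1} holds in \emph{all} of $Y$, including inside the metal, the difference $w$ admits a single global potential, $w=\nabla\Theta$ with $\Theta\in H_{\sharp}^1(Y)$ once the constant is removed using $\meanint_Y w = 0$. The condition $w=0$ in a connected $\Sigma$ pins $\Theta$ to one constant $d$ on $\overline{\Sigma}$; because $\Theta\in H_{\sharp}^1(Y)$ does not jump across $\del\Sigma$, it solves the Dirichlet problem $-\Delta\Theta = 0$ in $Y\setminus\overline{\Sigma}$ with $\Theta = d$ on $\del\Sigma$, whence $\Theta\equiv d$ and $w=0$ --- no divergence information across $\del\Sigma$ enters. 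What you did correctly spot is that this argument tacitly uses connectedness of $\iota(\Sigma)$ when it writes $\Theta = d$ in $\Sigma$ with a single constant: your two-ball example, $\nabla\Theta$ with $\Theta$ harmonic in the void and equal to distinct constants on the two balls, is indeed a nonzero zero-average solution of \eqref{eq:cell problem E} for that disconnected $\Sigma$, so the lemma as literally stated needs this connectivity hypothesis (which every example in Section~\ref{section:examples} satisfies). The appropriate repair is to record that hypothesis, not to strengthen the divergence condition in the solution class, since the latter changes the statement that the rest of the paper relies on.
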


\begin{proof}
\textit{i)} The derivation of the cell problem is by now standard and can, for instance, be found in~\cite{Lamacz-Schweizer-2016}. To give some ideas, fix $x \in \Omega$ and $\eta > 0$, and set $\fhi_{\eta}(x) \coloneqq \fhi(x, x/ \eta)$, where $\fhi \in C_c^{\infty}(\Omega; C_{\sharp}^{\infty}(Y; \C^3))$. Using integration by parts and the two-scale convergence of $(\Eeta)_{\eta}$ we obtain  that 
\begin{equation*}
  \lim_{\eta \to 0} \int_{\Omega} \scalarproduct[\big]{\eta \curl \Eeta(x)}{\fhi_{\eta}(x)} \d x = \int_{\Omega} \int_{Y} \scalarproduct[\big]{E_0(x, y)}{\curly \fhi(x, y)} \d y \d x \, .
\end{equation*}
From this and~\eqref{eq:MaxSeq1-intro}, we deduce that $E_0$ is a distributional solution to~\eqref{eq:cell problem E 1}.
Equation~\eqref{eq:cell problem E 2} is a consequence of~\eqref{eq:MaxSeq2-intro}, and~\eqref{eq:cell problem E 3} follows from~\eqref{eq:MaxSeq3-intro}.
Due to~\eqref{eq:cell problem E 1}, there holds $E_0 \in H_{\sharp}(\curl, Y)$.

In $\Omega \setminus \overline{\Sigma}$, the cell problem for $E_0 = E_0(x, \cdot)$ coincides with~\eqref{eq:cell problem E} if we replace $\Sigma$ by the empty set $\emptyset$. This problem, however, has only the constant solution.

\textit{ii)}
Let $u \in L_{\sharp}^2(Y; \C^3)$ be a distributional solution to~\eqref{eq:cell problem E} with $\meanint_{Y} u = 0$.
We claim that the field $u$ vanishes identically in $Y$.
Indeed,~\eqref{eq:cell problem E 1} implies the existence of a potential $\Theta \in H_{\sharp}^1(Y)$ and of a constant $c_0 \in \C^3$ such that $u = \nabla \Theta + c_0$ in $Y$. We may assume that $\meanint_Y \Theta = 0$. As $u$ has vanishing average, we conclude that $c_0 = 0$. On account of~\eqref{eq:cell problem E 2} and~\eqref{eq:cell problem E 3}, the potential $\Theta$ is a distributional solution to
\begin{align*}
  - \Delta \Theta &= 0 \quad \textrm{ in } Y \setminus \overline{\Sigma}\, ,\\
           \Theta &= d \quad \textrm{ in } \Sigma\, ,
\end{align*}
for some constant $d \in \C$. As $ \Theta \in H_{\sharp}^1(Y)$, the potential $\Theta$ does not jump across the boundary $\del \Sigma$. Consequently,  $\Theta = d$ in $Y$ and thus $u$ vanishes in $Y$.
\end{proof}
While we obtained the uniqueness result immediately, the existence
statement is more involved.  To investigate the solution space to the
cell problem~\eqref{eq:cell problem E}, we use the two index sets
from~\eqref{eq:index sets for H}:
$\mathcal{L}_{\Sigma} = \set[\big]{k \in \set{1,2,3} \given
  \textrm{there is a $k$-loop in } \Sigma}$ and
$\mathcal{N}_{\Sigma} = \set[\big]{k \in \set{1,2,3} \given
  \textrm{there is no $k$-loop in } \Sigma} = \set{1,2,3}
\setminus \mathcal{L}_{\Sigma}$ .  We claim that $\abs{\NE}$ coincides
with the dimension of the solution space of~\eqref{eq:cell problem E}.

\begin{lemma}[Connection between the $E_0$-problem~\eqref{eq:cell
    problem E} and $\NE$]\label{lemma:characterisation of index set N
    E}
  For $k \in \NE$ there exists a unique solution $E^k$
  to~\eqref{eq:cell problem E} with volume average $\e_k$. On the
  other hand, if $c \in \C^3$ is such that $c \notin \spann \set{ \e_k
    \given k \in \NE}$, then there is no solution $E$ to
  \eqref{eq:cell problem E} with volume average $c$.
\end{lemma}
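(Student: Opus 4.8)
The plan is to exploit the uniqueness already established in part~ii) of Lemma~\ref{lemma:cell problem E_0}: since a solution to~\eqref{eq:cell problem E} is determined by its volume average, the map sending a solution to its average is injective, and by linearity of~\eqref{eq:cell problem E} the set of \emph{achievable} averages is a linear subspace of $\C^3$. It therefore suffices to show that this subspace is exactly $\spann\set{\e_k \given k \in \NE}$. I would do this in two steps: first construct, for each $k \in \NE$, a solution with average $\e_k$ (this simultaneously yields the independence and the dimension count $\abs{\NE}$); then show that the average of \emph{any} solution has vanishing $\e_j$-component for every $j \in \LE$.

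For the existence step, fix $k \in \NE$ and look for a solution of the form $E^k = \e_k + \nabla \Theta$ with $\Theta \in H_{\sharp}^1(Y)$; then $\curly E^k = 0$ in $Y$ and $\meanint_Y E^k = \e_k$ hold automatically, so~\eqref{eq:cell problem E} reduces to finding $\Theta$ with $\nabla \Theta = -\e_k$ in $\Sigma$ (giving $E^k = 0$ in $\Sigma$) and $\Delta \Theta = 0$ in $Y \setminus \overline{\Sigma}$ (giving $\divy E^k = 0$ there). Since $k \in \NE$, there is no $k$-loop in $\Sigma$, and Lemma~\ref{lemma:existence of potentials in case of no k-loop} provides $\Theta_k \in H_{\sharp}^1(\Sigma)$ with $\nabla \Theta_k = \e_k$; extending $-\Theta_k$ to an element of $H_{\sharp}^1(Y)$ (possible because $\iota(\Sigma)$ has Lipschitz boundary) shows that the affine set $\set{\Theta \in H_{\sharp}^1(Y) \given \nabla \Theta = -\e_k \textrm{ in } \Sigma}$ is non-empty. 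On this set I would minimise the Dirichlet functional $\Theta \mapsto \int_Y \abs{\nabla \Theta + \e_k}^2$; it is convex, continuous, and coercive modulo constants, while the constraint set is closed and convex, so the direct method yields a minimiser whose gradient is unique. The associated Euler--Lagrange identity, tested against $\chi \in H_{\sharp}^1(Y)$ with $\nabla \chi = 0$ in $\Sigma$ (in particular against $\chi \in C_c^{\infty}(\iota(Y \setminus \overline{\Sigma}))$ extended by zero), is precisely the weak form of $\divy E^k = 0$ in $Y \setminus \overline{\Sigma}$.

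For the non-existence step, let $u$ be any solution of~\eqref{eq:cell problem E} with average $c$. As in the proof of part~ii), $\curly u = 0$ in $Y$ lets me write $u = c + \nabla \Phi$ with $\Phi \in H_{\sharp}^1(Y)$ and $\meanint_Y u = c$. Restricting to the simple Helmholtz domain $\Sigma$, I would compute the geometric average $\oint u$ of $u|_{\Sigma}$ in two ways, using Definition~\ref{definition:geometric average} and the well-definedness of $(\oint u)_j$ for $j \in \LE$: since $u = 0$ in $\Sigma$, the representation with potential $0$ and constant $0$ gives $(\oint u)_j = 0$; on the other hand the representation $u|_{\Sigma} = \nabla(\Phi|_{\Sigma}) + c$ gives $(\oint u)_j = \scalarproduct{c}{\e_j}$ for every $j \in \LE$. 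Comparing the two forces $\scalarproduct{c}{\e_j} = 0$ for all $j \in \LE$, i.e. $c \in \spann\set{\e_k \given k \in \NE}$; in particular no solution exists when $c \notin \spann\set{\e_k \given k \in \NE}$.

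The main obstacle is the existence step: one must turn the purely algebraic datum $\nabla \Theta = -\e_k$ on $\Sigma$ into a genuine $H_{\sharp}^1(Y)$-potential that is harmonic in the complement, which is where the Lipschitz extension and the variational argument (direct method plus Euler--Lagrange) enter. The non-existence step, by contrast, is immediate once the well-definedness of the geometric average on a simple Helmholtz domain is available: this is exactly the coordinate-alignment encoded in that notion, and it is what rules out averages with a non-zero component in an $\LE$-direction.
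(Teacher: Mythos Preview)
Your proof is essentially the paper's. For existence you invoke Lemma~\ref{lemma:existence of potentials in case of no k-loop} and extend the resulting potential on $\Sigma$ harmonically to $Y$; the paper solves the Dirichlet problem for $\Theta_k$ directly, while you minimise the equivalent Dirichlet energy, but the outcome is the same solution $E^k=\nabla\Theta_k+\e_k$. For non-existence you write a solution as $c+\nabla\Phi$ on $Y$ and compare two representations on $\Sigma$; this is precisely the line-integral computation~\eqref{eq:proof characterisation of solution space for E - 2} in the paper, repackaged in the geometric-average language (the paper first subtracts the $\NE$-part, but that step is cosmetic).

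One formal caveat: you refer to ``the simple Helmholtz domain $\Sigma$'', but no such hypothesis is placed on $\Sigma$ anywhere in the paper (only $Y\setminus\overline{\Sigma}$ is assumed simple Helmholtz, and only in later sections), so Definition~\ref{definition:geometric average} cannot be invoked as stated. This does not damage the substance of your argument: you already \emph{possess} the two decompositions $u|_{\Sigma}=\nabla 0+0=\nabla(\Phi|_{\Sigma})+c$, and the well-definedness of $\scalarproduct{c_0}{\e_j}$ for $j\in\LE$ is established by the line-integral computation in the remark after Definition~\ref{definition:geometric average}, which only uses the existence of a $j$-loop and the periodicity of the potential---not the simple-Helmholtz property itself. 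Either drop the phrase and carry out the line integral explicitly (as the paper does), or note that the uniqueness half of the geometric-average definition applies whenever two representations are given, regardless of whether $\Sigma$ is simple Helmholtz.
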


\begin{proof}
\textit{Part 1.} Let $k$ be an element of $\NE$. Our aim is to show
that a solution $E^k$ exist. Due to Lemma~\ref{lemma:existence of
  potentials in case of no k-loop}, there exists a potential
$\tilde{\Theta}_k \in H_{\sharp}^1(\Sigma)$ such that $\nabla
\tilde{\Theta}_k = - \e_k$. 

We extend $\tilde{\Theta}_k$ to all of $Y$ as follows: Let $\Theta_k \in H_{\sharp}^1(Y)$ be the weak solution to
  \begin{align*}
    - \Delta \Theta_k &= 0 \quad \: \: \: \, \textrm{ in } Y \setminus \overline{\Sigma} \, , \\
    \Theta_k &=\tilde{\Theta}_k  \quad \textrm{ on } \overline{\Sigma}\, .
  \end{align*}
By setting $E^k \coloneqq \nabla \Theta_k + \e_k$, we obtain a
solution to~\eqref{eq:cell problem E} whose cell average is $\e_k$.

  \textit{Part 2.}
 Let $c \in \C^3$ be such that $c \notin \spann \set{ \e_k
    \given k \in \NE}$. Assume that there is a solution $E$ to
  \eqref{eq:cell problem E} with $\meanint_Y E = c$ . 
  By Part 1, for every $k \in \NE$
  there is a solution $E^k$ to~\eqref{eq:cell problem E} with
  $\meanint_Y E^k = \e_k$. 
  Consider the field 
  \begin{equation*}
    v \coloneqq E - \sum_{k \in \NE} \scalarproduct{c}{\e_k}E^k \, .
  \end{equation*}
  This field is a solution to \eqref{eq:cell problem E} with
  $\meanint_Y v = \sum_{k \in \LE} \scalarproduct{c}{\e_k} \e_k$. As
  $c \notin \spann \set{ \e_k \given k \in \NH}$, there holds
  $\meanint_Y v \neq 0$. We find a potential
  $\Phi \in H_{\sharp}^1(Y)$ such that
  $v = \nabla \Phi + \sum_{l \in \LE} \scalarproduct{c}{\e_l} \e_l$ in
  $Y$ since $\curl v = 0$ in $Y$.  Fix an index $k \in \LE$, and let $\gamma$ be a $k$-loop in
  $\Sigma$. By~\eqref{eq:cell problem E 3}, we have that
  $\nabla \Phi \in C_{\sharp}^0(\Sigma; \C^3)$. As $v = 0$ in
  $\Sigma$, we calculate, by exploiting the periodicity of $\Phi$ in
  $Y$,
\begin{equation}\label{eq:proof characterisation of solution space for E - 2}
  0 = \lineint{v} = \lineint{\nabla \Theta} + \sum_{l \in \LE} \scalarproduct{c}{\e_l}
  \lineint{\e_l} = \scalarproduct{c}{\e_k} \scalarproduct{\tilde{\gamma}(1) -
    \tilde{\gamma}(0)}{\e_k} \, .
\end{equation}
Note that $ \scalarproduct{\tilde{\gamma}(1) -
    \tilde{\gamma}(0)}{\e_k} \neq 0$ since $\gamma$ is a $k$-loop. Thus $\scalarproduct{c}{\e_k} = 0$ for all $k \in \LE$.
This, however, contradicts $\meanint_Y v \neq 0$.
\end{proof}
Thanks to this lemma, we have the following result.

\begin{proposition}[Characterisation of the solution space of the $E_0$-problem]\label{proposition: dimension of solution space to the cell problem of E}
  For every index $k \in \NE$, let $E^k = E^k(y)$ be the solution
  to~\eqref{eq:cell problem E} from Lemma~\ref{lemma:characterisation
    of index set N E}.  Then there holds: Every solution $u$ to~\eqref{eq:cell
    problem E} can be written as 
  \begin{equation}\label{eq:characterisation of the solution space to E problem}
     u = \sum_{k \in \NE} \alpha_k E^k\,
  \end{equation}
  with constants $\alpha_k \in \C$ for $k \in \NE$.
 In particular, the dimension of the solution space coincides with $\abs{\NE}$.
\end{proposition}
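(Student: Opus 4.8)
The plan is to recognise the statement as a piece of linear algebra resting on the two preceding lemmas: I would show that the map sending a solution $u$ of~\eqref{eq:cell problem E} to its cell average $\meanint_Y u \in \C^3$ is a linear isomorphism from the solution space onto $\spann \set{\e_k \given k \in \NE}$, with the fields $E^k$ serving as the preimages of the basis vectors. First I would observe that the cell problem~\eqref{eq:cell problem E} is linear and homogeneous, so its solutions in $L_{\sharp}^2(Y; \C^3)$ form a complex vector space $\calS$, and $u \mapsto \meanint_Y u$ is a linear map $\calS \to \C^3$.

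The injectivity of this average map is exactly the content of Lemma~\ref{lemma:cell problem E_0}~ii): since the zero field solves the cell problem with vanishing average, any solution whose average is zero must coincide with it, so $\meanint_Y u = 0$ forces $u = 0$. For the image, I would take an arbitrary $u \in \calS$ and set $c \coloneqq \meanint_Y u$. The mere existence of the solution $u$ with average $c$, read against the contrapositive of the second assertion of Lemma~\ref{lemma:characterisation of index set N E}, forces $c \in \spann \set{\e_k \given k \in \NE}$, hence $c = \sum_{k \in \NE} \scalarproduct{c}{\e_k} \e_k$. Conversely, Part~1 of the same lemma supplies for each $k \in \NE$ a solution $E^k \in \calS$ with $\meanint_Y E^k = \e_k$, so the image is precisely $\spann \set{\e_k \given k \in \NE}$.

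With these two facts in hand the representation follows at once: for $u \in \calS$ put $\alpha_k \coloneqq \scalarproduct{c}{\e_k}$ and consider $v \coloneqq u - \sum_{k \in \NE} \alpha_k E^k \in \calS$. By construction $\meanint_Y v = c - \sum_{k \in \NE} \alpha_k \e_k = 0$, so injectivity gives $v = 0$, which is exactly~\eqref{eq:characterisation of the solution space to E problem}. For the dimension count, I would note that the $E^k$ are linearly independent because their averages $\set{\e_k \given k \in \NE}$ are, and they span $\calS$ by the representation just established; hence $\set{E^k \given k \in \NE}$ is a basis and $\dim \calS = \abs{\NE}$.

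I do not expect a genuine obstacle here, since once Lemmas~\ref{lemma:cell problem E_0} and~\ref{lemma:characterisation of index set N E} are available the argument is purely formal. The one point that needs slight care is invoking the uniqueness statement in the correct direction: it yields injectivity of the average map rather than surjectivity, while the second part of Lemma~\ref{lemma:characterisation of index set N E} must be read as the constraint that pins down the image. Keeping these two roles separate is the only subtlety.
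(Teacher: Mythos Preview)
Your proposal is correct and follows essentially the same route as the paper: define $v \coloneqq u - \sum_{k \in \NE} \alpha_k E^k$, use the second part of Lemma~\ref{lemma:characterisation of index set N E} to force $\meanint_Y v = 0$, and then invoke the uniqueness in Lemma~\ref{lemma:cell problem E_0}~ii) to conclude $v = 0$. The only cosmetic difference is that the paper first writes $u = \nabla \Theta + c_0$ and reads off the coefficients as $\alpha_k = \scalarproduct{c_0}{\e_k}$, whereas you work directly with the cell average $c = \meanint_Y u$; since $\meanint_Y \nabla \Theta = 0$ these are the same numbers, and your linear-isomorphism packaging is, if anything, a slight streamlining of the same argument.
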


\begin{proof}
 We need to prove that
  every solution $u$ to~\eqref{eq:cell problem E} can be written as in~\eqref{eq:characterisation of the solution space to E problem}.

  Let $u \in H_{\sharp}(\curl, Y)$ be an arbitrary solution
  to~\eqref{eq:cell problem E}. On account of~\eqref{eq:cell problem E
    1}, we find a potential $\Theta \in H_{\sharp}^1(Y)$ and $c_0 \in
  \C^3$ such that  $u = \nabla \Theta + c_0$ in $Y$. For each $k \in
  \set{1,2,3}$, we set $\alpha_k \coloneqq \scalarproduct{c_0}{\e_k}$.
  Consider the field
  \begin{align}
    v \coloneqq u - \sum_{k \in \NE} c_k E^k\, . \nonumber
  \shortintertext{This field is a solution to~\eqref{eq:cell problem E} with}
  \label{eq:proof characterisation of solution space for E - 1}
    \meanint_Y v =\sum_{l \in \LE} \alpha_l \e_l \, .
  \end{align}
  By the second statement of Lemma~\ref{lemma:characterisation of
    index set N E}, the coefficient $\alpha_l = 0$ for all $l \in \LE$. Hence  $v = 0$
in $Y$ by the uniqueness result from Lemma \ref{lemma:cell problem E_0}. This provides
\eqref{eq:characterisation of the solution space to E problem}.
\end{proof}

\begin{remark}\label{remark:the solution space of the cell problem of E is at most three dimensional}
  The previous proposition implies, in particular, that the solution space to the cell
  problem of $E_0$ is at most three dimensional. Note that no
  assumption (such as simple connectedness) on the domain $\Sigma$ was imposed here.
\end{remark}

\subsection{Cell problem for $H_0$}

\begin{lemma}[Cell problem for $H_0$]            
  \label{lemma:cell problem for H_0}
   Let  $R \subset \subset \Omega \subset \R^3$ and $\Sigma \subset
  Y$ be as in Section~\ref{section:geometry and assumptions}, and let
  $(\Eeta, \Heta)_{\eta}$ be a sequence of solutions
  to~\eqref{eq:MaxSysSeq-intro} that satisfies the
  energy-bound~\eqref{eq:energy-bound}. The
  two-scale limit  $H_0 \in L^2(\Omega \times Y; \C^3)$ satisfies the following:
\begin{enumerate}
\item[i)]
For almost all $x \in R$ the field $H_0 = H_0(x , \cdot)$ is an element of $H_{\sharp}(\div, Y)$ and a distributional solution to
   \begin{subequations}\label{eq:cell problem H_0}
     \begin{align}
       \label{eq:cell problem H_0 1}
       \curly H_0   &= 0 \quad \textrm{ in } Y \setminus \overline{\Sigma}\, , \\
       \label{eq:cell problem H_0 2}
                               \divy H_0 &= 0  \quad \textrm{ in } Y\, ,\\
       \label{eq:cell problem H_0 3}
       H_0 &= 0 \quad \textrm{ in } \Sigma\, .
  \end{align}
\end{subequations}
Outside the meta-material, the two-scale limit $H_0$ is
$y$-independent; that is,  $H_0(x, y) = H_0(x) = H(x)$ for a.e. $x
\in \Omega \setminus R$ and a.e. $y \in Y$.
\item[ii)]
If $Y \setminus \overline{\Sigma}$ is a simple Helmholtz domain,
then for every $c \in \C^3$ there is at most one solution $u \in
H_{\sharp}(\div, Y)$ to~\eqref{eq:cell problem H_0} with geometric
average $\oint u = c$.
\end{enumerate}
\end{lemma}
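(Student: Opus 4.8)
The plan is to establish uniqueness by a linearity argument: if $u_1, u_2 \in H_{\sharp}(\div, Y)$ both solve~\eqref{eq:cell problem H_0} and satisfy $\oint u_1 = \oint u_2 = c$, then their difference $w \coloneqq u_1 - u_2$ again solves~\eqref{eq:cell problem H_0}, and, since the geometric average is linear on curl-free fields, $\oint w = 0$. Thus it suffices to prove that $w = 0$ in $Y$. The first step is to represent $w$ as a gradient. As $U \coloneqq Y \setminus \overline{\Sigma}$ is a simple Helmholtz domain and $\curl w = 0$ in $U$ by~\eqref{eq:cell problem H_0 1}, Corollary~\ref{corollary:helmholtz decomposition with vanishing components of c_0} yields a potential $\Theta \in H_{\sharp}^1(U)$ and a constant $c_0 \in \C^3$ with $w = \nabla \Theta + c_0$ in $U$ and $\scalarproduct{c_0}{\e_k} = 0$ for every $k \in \NH$. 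By the first part of Definition~\ref{definition:geometric average}, the vanishing of the geometric average gives $\scalarproduct{c_0}{\e_k} = (\oint w)_k = 0$ for every $k \in \LH$. Since $\NH$ and $\LH$ partition $\set{1,2,3}$, we conclude $c_0 = 0$, so that $w = \nabla \Theta$ in $U$.

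The second step exploits the divergence condition together with the Dirichlet condition inside $\Sigma$. Equation~\eqref{eq:cell problem H_0 2} gives $\div w = 0$ in $Y$, and hence in particular $\Delta \Theta = \div(\nabla \Theta) = 0$ in $U$. Moreover, since $w \in H_{\sharp}(\div, Y)$ possesses a continuous normal trace across the interface $\del \Sigma = \del U$, while $w = 0$ in $\Sigma$ by~\eqref{eq:cell problem H_0 3}, the normal trace $w \cdot n$ vanishes on $\del U$ when computed from the $U$-side. Feeding both facts into the generalized Gauss--Green identity for the pairing $H_{\sharp}(\div, U) \times H_{\sharp}^1(U)$ on $\iota(U) \subset \T^3$ and testing with $\overline{\Theta}$, I obtain
\begin{equation*}
  \int_U \abs{\nabla \Theta}^2 = \int_U \scalarproduct{w}{\nabla \overline{\Theta}} = \scalarproduct{w \cdot n}{\overline{\Theta}}_{\del U} - \int_U \overline{\Theta}\, \div w = 0\, .
\end{equation*}
Consequently $\nabla \Theta = 0$ and therefore $w = 0$ in $U$; together with $w = 0$ in $\Sigma$ this shows $w = 0$ in $Y$, which is the claim.

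The step I expect to be the main obstacle is the rigorous handling of the boundary term in the Gauss--Green identity: one must give a meaning to the normal trace of the merely $H(\div)$-regular field $w$ on the interface $\del U$ --- this is exactly where the hypothesis that $\Sigma$ has Lipschitz boundary enters --- and then argue that continuity of the normal trace across $\del \Sigma$ combined with $w \equiv 0$ in $\Sigma$ forces $w \cdot n = 0$ from the $U$-side. Once the Gauss--Green formula is available on $\iota(U)$, the energy identity above closes the argument at once. I emphasise that the simple Helmholtz assumption is indispensable here: it is precisely what furnishes the decomposition $w = \nabla \Theta + c_0$ and excludes nontrivial curl-free, divergence-free fields (harmonic one-forms) that would otherwise obstruct uniqueness.
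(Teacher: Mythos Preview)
Your argument for part~\textit{ii)} is correct and follows the same route as the paper: reduce to a solution with vanishing geometric average, represent it as a periodic gradient on $U=Y\setminus\overline{\Sigma}$ by eliminating the constant $c_0$ via the geometric average and the $\NH$-potentials, and then use $\div w=0$ in $Y$ together with $w=0$ in $\Sigma$ to conclude. The only difference is cosmetic: you invoke Corollary~\ref{corollary:helmholtz decomposition with vanishing components of c_0} to kill the $\NH$-components of $c_0$ in one stroke and then close with the energy identity $\int_U|\nabla\Theta|^2=0$, whereas the paper first writes $u=\nabla\Theta+c_0$, absorbs the $\NH$-components of $c_0$ into a modified potential $\tilde\Theta$ via Lemma~\ref{lemma:existence of potentials in case of no k-loop}, and then phrases the conclusion as uniqueness for the homogeneous Neumann problem $-\Delta\tilde\Theta=0$, $\partial_\nu\tilde\Theta=0$. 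Your energy identity is of course exactly the standard proof of that Neumann uniqueness, so the two arguments are equivalent; if anything, your version has the mild advantage that it does not tacitly rely on $Y\setminus\overline{\Sigma}$ being connected. Your worry about the normal trace is well placed but not a genuine obstacle: the Lipschitz regularity of $\partial\Sigma$ and $w\in H_\sharp(\div,Y)$ give a well-defined normal trace in $H^{-1/2}$ that is continuous across $\partial\Sigma$, and $w\equiv 0$ in $\Sigma$ forces it to vanish --- this is precisely what the paper uses when it writes $0=\scalarproduct{u}{\nu}=\partial_\nu\tilde\Theta$.

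Note, however, that the lemma has two parts and you only address~\textit{ii)}. Part~\textit{i)} --- the derivation of the cell problem~\eqref{eq:cell problem H_0} from two-scale convergence and Maxwell's equations --- is not covered by your proposal; the paper treats it by the standard oscillating test-function argument (as in the $E_0$-case), obtaining~\eqref{eq:cell problem H_0 1} from~\eqref{eq:MaxSeq2-intro}, \eqref{eq:cell problem H_0 2} from the divergence-freeness of $H^\eta$, and~\eqref{eq:cell problem H_0 3} from $H^\eta\indicator{\Sigma_\eta}=0$.
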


\begin{proof}
\textit{i) $H_0$ is a distributional solution to~\eqref{eq:cell problem H_0}.}
Exploiting  the two-scale convergence of $(\Heta)_{\eta}$ and
$(\Eeta)_{\eta}$, we deduce~\eqref{eq:cell problem H_0 1} by
Maxwell's
equation~\eqref{eq:MaxSeq2-intro}. By~\eqref{eq:MaxSeq1-intro}, each
$\Heta$ is a divergence-free field in $\Omega$, and hence $\divy H_0 = 0$ in $Y$.
On account of~\eqref{eq:MaxSeq3-intro},  the field $\Heta\,
\indicator{\Sigma_{\eta}} $ vanishes identically in $R$. Thus,  $0 =
\Heta \, \indicator{\Sigma_{\eta}} \twoscale H_0 \,
\indicator{\Sigma}$ implies that $H_0(x, y) = 0$ for almost all $(x,
y) \in R \times \Sigma$, and hence~\eqref{eq:cell problem H_0 3}.

Outside the meta material, the fields $H_0 = H_0(x, \cdot)$ and $H$ coincide since the corresponding cell problem admits only constant solutions.

\textit{ii) Uniqueness.} Let $Y \setminus \overline{\Sigma}$ be a
simple Helmholtz domain, and let $u \in L_{\sharp}^2(Y; \C^3)$ be a
solution to~\eqref{eq:cell problem H_0} with vanishing geometric
average, $\oint u = 0$. We claim that $u$ vanishes identically in
$Y$. As $u$ is curl free in the simple Helmholtz domain $Y \setminus
\overline{\Sigma}$, we  find a potential $\Theta  \in H_{\sharp}^1(Y
\setminus \overline{\Sigma})$ and a constant $c_0 \in \C^3$ such
that $u = \nabla \Theta + c_0$ in $Y \setminus \overline{\Sigma}$. For
an index $k \in \LH$, we can apply the first part of
Definition~\ref{definition:geometric average} and find that
$(\oint u)_k =\scalarproduct{c_0}{\e_k} $. By assumption, $\oint u =
0$ and hence $\scalarproduct{c_0}{\e_k} = 0$ for all $k \in \LH$. Due
to Lemma~\ref{lemma:existence of potentials in case of no k-loop}, for
every $k \in \NH$, we find a potential $\Theta_k \in H_{\sharp}^1(Y
\setminus \overline{\Sigma})$
such that $\nabla \Theta_k = \e_k$. The function $\tilde{\Theta} \coloneqq
\Theta + \sum_{k \in \NH} \scalarproduct{c_0}{\e_k} \Theta_k$ is an
element of $H_{\sharp}^1(Y \setminus \overline{\Sigma})$. Moreover, $u
= \nabla \Theta + \sum_{k \in \NH} \scalarproduct{c_0}{\e_k} =
\nabla \tilde{\Theta}$ in $Y \setminus \overline{\Sigma}$.
Equations~\eqref{eq:cell problem H_0 2} and~\eqref{eq:cell problem H_0
  3} imply $0 = \scalarproduct{u}{\nu} = \del_{\nu} \tilde{\Theta}$ on
$\del \Sigma$, where $\nu$ is the outward unit normal vector. We conclude that $\tilde{\Theta}$ is a weak solution to 
\begin{align*}
  - \Delta \tilde{\Theta} &= 0 \quad \textrm{ in } Y \setminus \overline{\Sigma}\, ,\\
  \del_{\nu} \tilde{\Theta} &= 0 \quad \textrm{ on } \del \Sigma\,.
\end{align*}
Solutions to this Neumann boundary problem are constant since $Y
\setminus \overline{\Sigma}$ is a domain. Hence $u = 0$ in $Y$.
\end{proof}

\begin{remark}
  Note that, in contrast to the $E_0$-problem (see
  Lemma~\ref{lemma:cell problem E_0}), the uniqueness statement of
  \textit{ii)} is false if we do not assume that
  $Y \setminus \overline{\Sigma}$ is a simple Helmholtz
  domain. Indeed, in~\cite {Lipton-Schweizer}, a $3$-dimensional full
  torus $\Sigma$ is studied and a non-trivial solution with vanishing
  geometric average is found.
\end{remark}

\begin{lemma}[Connection between the $H_0$-problem \eqref{eq:cell problem H_0} and $\LH$]\label{lemma:for each k in L H there is a solution with geometric average equal to e k}
  Let $Y \setminus \overline{\Sigma}$ be a simple Helmholtz domain. If
  the index $k \in \set{1,2,3}$ is an element of $\LH$, then there
  exists a unique solution $H^k$ to~\eqref{eq:cell problem H_0} with
  geometric average $\e_k$.
\end{lemma}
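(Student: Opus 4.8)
The plan is to dispose of uniqueness immediately and then spend all the effort on the construction of one solution. Uniqueness is already contained in part \textit{ii)} of Lemma~\ref{lemma:cell problem for H_0}: since $Y \setminus \overline{\Sigma}$ is assumed to be a simple Helmholtz domain, there is at most one solution to~\eqref{eq:cell problem H_0} with a prescribed geometric average, so two solutions with average $\e_k$ must coincide. It therefore remains to exhibit a single $H^k$ solving~\eqref{eq:cell problem H_0} with $\oint H^k = \e_k$. Because $k \in \LH$, the domain $Y \setminus \overline{\Sigma}$ carries a $k$-loop, so I cannot expect a single-valued potential $\Theta$ with $\nabla \Theta = \e_k$; instead I construct $H^k$ as $\nabla\Theta + \e_k$ extended by zero into $\Sigma$, where $\Theta$ solves a periodic Neumann problem.

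Concretely, reducing to the connected case exactly as in the uniqueness proof of Lemma~\ref{lemma:cell problem for H_0}, I would define $\Theta \in H_{\sharp}^1(Y \setminus \overline{\Sigma})$ as the weak solution of
\[
  \int_{Y \setminus \overline{\Sigma}} \scalarproduct{\nabla \Theta}{\nabla \phi} = - \int_{Y \setminus \overline{\Sigma}} \scalarproduct{\e_k}{\nabla \phi} \qquad \textrm{for all } \phi \in H_{\sharp}^1(Y \setminus \overline{\Sigma}) \, ,
\]
which corresponds to the Neumann problem $-\Delta \Theta = 0$ in $Y \setminus \overline{\Sigma}$ with $\del_{\nu} \Theta = -\scalarproduct{\e_k}{\nu}$ on $\del \Sigma$. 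The right-hand side is a bounded linear functional that vanishes on constants; equivalently, the classical compatibility condition $\int_{\del \Sigma} \scalarproduct{\e_k}{\nu} = \int_{\Sigma} \div \e_k = 0$ holds by the divergence theorem for the constant field $\e_k$. Coercivity of the Dirichlet form on $H_{\sharp}^1(Y \setminus \overline{\Sigma}) / \C$ via Poincaré--Wirtinger together with Lax--Milgram then produces $\Theta$, unique up to an additive constant.

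Setting $H^k \coloneqq \nabla \Theta + \e_k$ in $Y \setminus \overline{\Sigma}$ and $H^k \coloneqq 0$ in $\Sigma$, I would check the three cell-problem equations. Equation~\eqref{eq:cell problem H_0 1} is immediate, since $H^k$ is a gradient plus a constant on $Y \setminus \overline{\Sigma}$; equation~\eqref{eq:cell problem H_0 3} holds by construction. The key point is~\eqref{eq:cell problem H_0 2}: for any periodic test function $\phi$, the integral $\int_Y \scalarproduct{H^k}{\nabla \phi}$ reduces, as $H^k$ vanishes on $\Sigma$, to $\int_{Y \setminus \overline{\Sigma}} \scalarproduct{\nabla \Theta + \e_k}{\nabla \phi}$, which is zero by the weak formulation defining $\Theta$. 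Thus the variational choice simultaneously encodes harmonicity in $Y \setminus \overline{\Sigma}$ and the vanishing-normal-trace transmission condition across $\del \Sigma$, so that $\divy H^k = 0$ in all of $Y$ and $H^k \in H_{\sharp}(\div, Y)$.

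Finally, to compute the geometric average I would note that $H^k = \nabla \Theta + c_0$ on the simple Helmholtz domain $Y \setminus \overline{\Sigma}$ with the constant $c_0 = \e_k$. Since $k \in \LH$, this constant already has vanishing components along $\NH$ (in accordance with Corollary~\ref{corollary:helmholtz decomposition with vanishing components of c_0}), so the first part of Definition~\ref{definition:geometric average} applies directly and yields $(\oint H^k)_j = \scalarproduct{c_0}{\e_j} = \scalarproduct{\e_k}{\e_j}$ for $j \in \LH$ and $(\oint H^k)_j = 0$ for $j \in \NH$; because $k \in \LH$ this evaluates to $\oint H^k = \e_k$, as desired. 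I expect the only genuinely delicate points to be the verification of the compatibility condition and the observation that the weak Neumann formulation is precisely what makes the zero-extension globally divergence free; the geometric-average computation is then a direct application of the definition.
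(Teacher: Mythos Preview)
Your proposal is correct and follows exactly the paper's approach: solve the periodic Neumann problem $-\Delta\Theta_k=0$ in $Y\setminus\overline{\Sigma}$ with $\del_{\nu}\Theta_k=-\scalarproduct{\e_k}{\nu}$ on $\del\Sigma$, set $H^k\coloneqq\nabla\Theta_k+\e_k$ in $Y\setminus\overline{\Sigma}$ and $0$ in $\Sigma$, and read off $\oint H^k=\e_k$ from Definition~\ref{definition:geometric average}. The paper's proof is terser, simply asserting existence of $\Theta_k$ and that $H^k$ solves~\eqref{eq:cell problem H_0}; your version spells out the variational formulation, the compatibility condition, and the reason the zero-extension is globally divergence free, which is helpful but not a different argument.
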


\begin{proof}
  Fix $k \in \LH$ and let $\Theta_k \in H_{\sharp}^1(Y \setminus \overline{\Sigma})$ be a distributional solution to
  \begin{align*}
    - \Delta \Theta_k &= 0 \qquad \qquad \textrm{ in } Y \setminus \overline{\Sigma}\, , \\
    \del_{\nu} \Theta_k &= - \scalarproduct{\e_k}{\nu} \quad \textrm{ on } \del \Sigma\, .
  \end{align*}
We define $H^k \colon Y \to \C^3$ by
\begin{equation*}
  H^k \coloneqq 
  \begin{cases}
    \nabla \Theta_k + \e_k & \textrm{ in } Y \setminus \overline{\Sigma} \, ,\\
    0                   & \textrm{ in } \Sigma\, .
   \end{cases}
\end{equation*}
 In this way, we obtain an $L_{\sharp}^2(Y ; \C^3)$-vector field $H^k$
 that is a  distributional solution to~\eqref{eq:cell problem
   H_0}. As $k \in \LH$, we obtain, using the definition of the geometric average, $\oint  H^k = \e_k$. 
\end{proof}

\begin{proposition}[Characterisation of the solution space to the $H_0$-problem]\label{proposition:for every element of the set L E there is a solution to the cell problem}
 Let $Y \setminus \overline{\Sigma}$ be a simple Helmholtz domain. For
 every index $k \in \LH$, let $H^k = H^k(y)$ be the solution
 to~\eqref{eq:cell problem H_0} from Lemma~\ref{lemma:for each k in L
   H there is a solution with geometric average equal to e k}. Then
 there holds: Every solution $u$ to~\eqref{eq:cell problem H_0} can be
 written as
  \begin{equation}\label{eq:solution to H-problem can be written as
      linear combination of special fields}
    u = \sum_{k \in \LH} \alpha_k H^k\, ,
  \end{equation}
with  constants $\alpha_k \coloneqq (\oint u)_k\in \C$ for $k \in \LH$.
In particular, the dimension of the solution space coincides with $\abs{\LH}$.
\end{proposition}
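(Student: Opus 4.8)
The plan is to follow the blueprint of Proposition~\ref{proposition: dimension of solution space to the cell problem of E}, with the geometric average $\oint$ playing the role that the cell-average $\meanint_Y$ played for the $E_0$-problem. Given an arbitrary solution $u$ to~\eqref{eq:cell problem H_0}, I set $\alpha_k \coloneqq (\oint u)_k$ for $k \in \LH$ and consider the field
\begin{equation*}
  v \coloneqq u - \sum_{k \in \LH} \alpha_k H^k\, .
\end{equation*}
Since the equations~\eqref{eq:cell problem H_0}, the divergence condition in $Y$, and the condition of vanishing in $\Sigma$ are all linear, $v$ is again a solution to~\eqref{eq:cell problem H_0}. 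The strategy is then to show that $\oint v = 0$ and to invoke the uniqueness statement \emph{ii)} of Lemma~\ref{lemma:cell problem for H_0}, which applies because $Y \setminus \overline{\Sigma}$ is assumed to be a simple Helmholtz domain, in order to conclude $v = 0$.

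To compute $\oint v$, I would first record that $\oint$ is linear on the space of fields that are curl free in $Y\setminus\overline{\Sigma}$. Indeed, writing $u = \nabla\Theta + c_0$ and $H^k = \nabla\Theta^{(k)} + c_0^{(k)}$ with potentials in $H_{\sharp}^1(Y\setminus\overline{\Sigma})$ and constants $c_0, c_0^{(k)} \in \C^3$ (such decompositions exist by the definition of a simple Helmholtz domain), the field $v$ admits the decomposition $v = \nabla\big(\Theta - \sum_{k} \alpha_k \Theta^{(k)}\big) + \big(c_0 - \sum_{k} \alpha_k c_0^{(k)}\big)$ in $Y\setminus\overline{\Sigma}$. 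By the first part of Definition~\ref{definition:geometric average}, the $\LH$-components of $\oint v$ read off this constant, so that $(\oint v)_l = (\oint u)_l - \sum_{k\in\LH}\alpha_k (\oint H^k)_l$ for $l \in \LH$, while $(\oint v)_l = 0$ for $l \in \NH$ by definition. Using $\oint H^k = \e_k$ from Lemma~\ref{lemma:for each k in L H there is a solution with geometric average equal to e k}, this gives $(\oint v)_l = \alpha_l - \alpha_l = 0$ for every $l \in \LH$. Hence $\oint v = 0$, and therefore $v = 0$, which yields the representation~\eqref{eq:solution to H-problem can be written as linear combination of special fields}.

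It then remains to read off the dimension. The representation shows that $\set{H^k \given k \in \LH}$ spans the solution space, and these fields are linearly independent: applying $\oint$ to a vanishing combination $\sum_{k\in\LH}\beta_k H^k = 0$ and using $\oint H^k = \e_k$ forces all $\beta_k = 0$. Consequently the solution space has dimension $\abs{\LH}$.

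The argument is essentially routine once the supporting lemmas are in place; the single point requiring genuine care is the linearity and well-definedness of $\oint$ on the $\LH$-components. This is where the subtlety noted earlier enters: the constant $c_0$ in the Helmholtz decomposition is not unique (one may add multiples of $\e_k$ for $k \in \NH$), but the pairing $\scalarproduct{c_0}{\e_l}$ for $l \in \LH$ is independent of this ambiguity, so that $(\oint u)_l$, $(\oint H^k)_l$, and the combination exhibited above are all unambiguous. Verifying that this well-definedness genuinely underpins the additivity of $\oint$, and hence the computation of $\oint v$, is the only step I would treat with more than cursory attention.
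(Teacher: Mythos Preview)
Your proposal is correct and follows essentially the same approach as the paper: define $v \coloneqq u - \sum_{k\in\LH}(\oint u)_k H^k$, compute $\oint v = 0$ (using linearity of the geometric average together with $\oint H^k = \e_k$ and the fact that the $\NH$-components of any geometric average vanish by definition), and then invoke the uniqueness statement of Lemma~\ref{lemma:cell problem for H_0}. Your treatment is in fact slightly more explicit than the paper's in spelling out why $\oint$ is linear and well defined on the $\LH$-components.
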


\begin{proof}
We use the solutions $H^k$ of Lemma~\ref{lemma:for each k in L H there is a solution with
  geometric average equal to e k}. The set $\set{H^k
  \given k \in \LH}$ is linearly independent since the geometric
averages of the $H^k$ are linear independent. We need to prove that every
solution $u$ to~\eqref{eq:cell problem H_0} can be written as in~\eqref{eq:solution to H-problem can be written as
      linear combination of special fields}.

Let $u \in H_{\sharp}(\div, Y)$ be a solution to~\eqref{eq:cell
  problem H_0}. We define 
\begin{align*}
  v \coloneqq u - \sum_{k \in \LH} \bigg(\oint u\bigg)_k H^k\, .
\shortintertext{The field $v$ is also a solution to~\eqref{eq:cell
  problem H_0} and has the geometric average}
  \oint v = \sum_{k \in \NH} \bigg( \oint u \bigg)_k \e_k \, .
\end{align*}
As those components of the geometric average for which there is no
loop in $Y \setminus \overline{\Sigma}$ vanish, by the first part of
the definition of the geometric average, the right-hand side
vanishes. Due to the uniqueness statement of Lemma~\ref{lemma:cell
  problem for H_0}, $v$ vanishes in $Y$. This provides~\eqref{eq:solution to H-problem can be written as
      linear combination of special fields}.
\end{proof}

\begin{remark}
  As a consequence of the previous proposition, we find that the solution space to the cell problem of $H_0$ is at most three dimensional if $Y \setminus\overline{\Sigma}$ is a simple Helmholtz domain.
\end{remark}

\begin{remark}
  Geometric intuition suggests that $\abs{\LH} \geq \abs{\NE}$; there
  is, however, no obvious proof of this fact. As a consequence of this
  inequality, we find that the dimensions $d_E$ and $d_H$ of the
  solution spaces to the $E_0$-problem~\eqref{eq:cell problem E} and
  to the $H_0$-problem~\eqref{eq:cell problem H_0} satisfy $d_H \geq d_E$.
\end{remark}

\section{Derivation of the effective equations}\label{sec:derivation of the effective equations}

Our aim in this section is to derive the effective Maxwell system. We
assume that $\Omega \subset \R^3$, the subdomain $R \subset \subset
\Omega$, and $\Sigma \subset Y$ are as in
Section~\ref{section:geometry and assumptions}. We work with the two
index sets $\NE$ and $\LH$ defined in~\eqref{eq:index sets for H 2}
and \eqref{eq:index sets for H 1}, respectively.

We define the matrices $\eps_{\textrm{eff}}$, $\mu_{\textrm{eff}} \in
\R^{3 \times 3}$ by setting, for $k \in \set{1,2,3}$, 
\begin{align}
  (\eps_{\textrm{eff}})_{kl} &\coloneqq \label{eq:definition eps eff}
  \begin{cases}
  \scalarproduct[\big]{E^k}{E^l}_{L^2(Y; \C^3)} & \textrm{ if  } k, l \in \NE\, ,\\
  0                                         & \:\, \textrm{otherwise} \, ,
  \end{cases}
\shortintertext{and}
  \mu_{\textrm{eff}} \, \e_k &\coloneqq \label{eq:definition mu eff}
  \begin{cases}
    \int_Y H^k \quad \textrm{ if } k \in \LH\, ,\\
    0 \qquad \quad \textrm{ otherwise}\, .
  \end{cases}
\end{align}
The  \emph{effective permittivity} $\hat{\eps} \colon \Omega \to \R^{3\times 3}$  and the \emph{effective permeability} $\hat{\mu} \colon \Omega \to \R^{3 \times 3}$ are defined by
\begin{equation}\label{eq:definition effective permittivity and permeability}
  \hat{\eps}(x) \coloneqq
  \begin{cases}
    \eps_{\textrm{eff}} \quad \textrm{if } x \in R\, ,\\
    \id_{3} \:\: \textrm{ otherwise}\, ,
  \end{cases}
  \qquad 
  \hat{\mu}(x) \coloneqq
  \begin{cases}
     \mu_{\textrm{eff}} \quad \textrm{if } x \in R\, ,\\
    \id_{3} \:\: \,\textrm{ otherwise}\, ,
  \end{cases}
\end{equation}
where $\id_3 \in \R^{3 \times 3}$ is the identity matrix. 

Let $(\Eeta, \Heta)_{\eta}$ be a sequence of solutions
to~\eqref{eq:MaxSysSeq-intro} that satisfies the
energy-bound~\eqref{eq:energy-bound}; the corresponding two-scale
limits are denoted by $E_0$, $H_0 \in L^2(\Omega \times Y;
\C^3)$. Assume that $Y \setminus \overline{\Sigma}$ is such that we
can define the geometric average.  We then define the \emph{limit
  fields} $\hat{E},\hat{H} \colon \Omega \to \C^3$ by
\begin{equation}\label{eq:limit fields}
  \hat{E}(x) \coloneqq \int_Y E_0(x, y) \d y \quad \textrm{ and } \quad \hat{H}(x) \coloneqq \oint H_0(x, \cdot)\, .
\end{equation}
We recall that $H_0(x, \cdot)$ solves~\eqref{eq:cell problem H_0}; it is
therefore curl free in $Y \setminus \overline{\Sigma}$ and vanishes in $\Sigma$.
The two fields $\hat{E}$ and $\hat{H}$ are of class $L^2(\Omega; \C^3)$.

\begin{theorem}[Macroscopic equations]\label{theorem:macroscopic equations}
  Let $(\Eeta, \Heta)_{\eta}$ be a sequence of solutions
  to~\eqref{eq:MaxSysSeq-intro} that satisfies the energy
  bound~\eqref{eq:energy-bound}. Assume that the geometry
  $\Sigma_{\eta} \subset R \subset \Omega$ is as in
  Section~\ref{section:geometry and assumptions}.  Let
  $Y \setminus \overline{\Sigma}$ be a simple Helmholtz domain, and
  let $\NE$ and $\LH$ be the corresponding index sets. Let the
  effective permittivity $\hat{\eps}$ and the effective permeability
  $\hat{\mu}$ be defined as in~\eqref{eq:definition effective
    permittivity and permeability}, and let the limit fields
  $(\hat{E}, \hat{H})$ be defined as in~\eqref{eq:limit fields}. Then
  $\hat{E}$ and $\hat{H}$ are distributional solutions to
\begin{subequations}\label{eq:effective equations - derivation of effective equations}
  \begin{align}
        \label{eq:effective equations 1 - derivation of effective equations}
        \curl \hat{E}   &=\phantom{-} \i \omega \mu_0 \hat{\mu} \hat{H}      \quad \:\:\: \:\textrm{ in } \Omega\, ,\\
         \label{eq:effective equations 3 - derivation of effective equations}
          \curl \hat{H} &= - \i \omega \eps_0 \hat{\eps} \hat{E} \quad
                          \: \: \: \: \: \,  \textrm{ in } \Omega \setminus R\, , \\
        \label{eq:effective equations 2 - derivation of effective equations}
    (\curl \hat{H})_k &= -\i \omega \eps_0 (\hat{\eps} \hat{E})_k
                        \quad \textrm{ in } \Omega\, ,  \textrm{ for
                        every } k \in \NE \, , \\
        \label{eq:effective equations 4 - derivation of effective equations}
        \hat{E}_k          &= 0 \qquad \qquad \quad \: \: \: \, \textrm{ in } R\, , \textrm{ for every }  k \in \set{1,2,3} \setminus \NE\, , \\
        \label{eq:effective equations 5 - derivation of effective equations}
        \hat{H}_k         &= 0 \qquad \qquad \quad \: \: \: \, \textrm{ in } R \, , \textrm{ for every } k \in \NH \, .
  \end{align}
\end{subequations}
\end{theorem}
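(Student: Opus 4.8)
The plan is to pass to the two-scale limit in suitably chosen (partly oscillating) test functions and to translate the cell-problem structure of $E_0$ and $H_0$ into the macroscopic equations. Two identifications drive everything. First, the weak $L^2$-limits satisfy $E=\int_Y E_0=\hat E$ and, by Proposition~\ref{proposition:for every element of the set L E there is a solution to the cell problem}, $H_0(x,\cdot)=\sum_{k\in\LH}\hat H_k(x)\,H^k$, so that $H=\int_Y H_0=\sum_{k\in\LH}\hat H_k\int_Y H^k=\hat\mu\hat H$ by the definition~\eqref{eq:definition mu eff} of $\mu_{\textrm{eff}}$ (using $\hat H_k=0$ for $k\in\NH$). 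Second, the vanishing relations~\eqref{eq:effective equations 4 - derivation of effective equations} and~\eqref{eq:effective equations 5 - derivation of effective equations} are immediate: by Lemma~\ref{lemma:characterisation of index set N E} the cell average $\hat E(x)=\int_Y E_0(x,\cdot)$ lies in $\spann\set{\e_k\given k\in\NE}$, whence $\hat E_k=0$ for $k\notin\NE$; and for $k\in\NH$ the first part of Definition~\ref{definition:geometric average} sets $(\oint H_0)_k=0$, whence $\hat H_k=0$.

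The two curl equations that do not involve the microstructure are obtained directly. For~\eqref{eq:effective equations 1 - derivation of effective equations} I would test~\eqref{eq:MaxSeq1-intro}, which holds in all of $\Omega$, against $\psi\in C_c^\infty(\Omega;\C^3)$; integration by parts gives $\int_\Omega\scalarproduct{\Eeta}{\curl\psi}=\i\omega\mu_0\int_\Omega\scalarproduct{\Heta}{\psi}$, and passing to the weak limit yields $\curl\hat E=\i\omega\mu_0 H=\i\omega\mu_0\hat\mu\hat H$ in $\Omega$. For~\eqref{eq:effective equations 3 - derivation of effective equations} I test~\eqref{eq:MaxSeq2-intro} against $\psi\in C_c^\infty(\Omega\setminus R;\C^3)$; since $\Sigma_\eta\subset R$, no metal is seen and the weak limit gives $\curl\hat H=-\i\omega\eps_0\hat E=-\i\omega\eps_0\hat\eps\hat E$ in $\Omega\setminus R$, using $\hat\eps=\id_3$ there and $\hat H=\hat E$ equal to the corresponding $y$-independent limits.

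The heart of the matter is~\eqref{eq:effective equations 2 - derivation of effective equations}. Fix $k\in\NE$ and $\phi\in C_c^\infty(\Omega)$, and use the oscillating test field $\psi_\eta(x)\coloneqq\phi(x)\,E^k(x/\eta)$. Because $\curly E^k=0$ in $Y$, one has $\curl\psi_\eta=\nabla\phi\wedge E^k(\cdot/\eta)$, and since $\curl\Heta=-\i\omega\eps_0\Eeta$ a.e.\ in $\Omega$ (both sides vanish on $\Sigma_\eta$), integration by parts gives
\begin{equation*}
  \int_\Omega\scalarproduct[\big]{\Heta}{\nabla\phi\wedge E^k(\cdot/\eta)}
  = -\i\omega\eps_0\int_\Omega\scalarproduct[\big]{\Eeta}{\phi\,E^k(\cdot/\eta)}\, .
\end{equation*}
Passing to the two-scale limit (the field $E^k\in L_{\sharp}^2$ is an admissible test function, after mollification if needed), the right-hand side converges to $-\i\omega\eps_0\int_\Omega\phi\int_Y\scalarproduct{E_0}{E^k}$, and since $E_0(x,\cdot)=\sum_{l\in\NE}\hat E_l(x)E^l$ this equals $-\i\omega\eps_0\int_\Omega\phi\,(\hat\eps\hat E)_k$ by~\eqref{eq:definition eps eff}. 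For the left-hand side I rewrite $\scalarproduct{H_0}{\nabla\phi\wedge E^k}=\scalarproduct{\nabla\phi}{E^k\wedge H_0}$ and invoke Corollary~\ref{corollary:property of geometric average} with $U=Y\setminus\overline\Sigma$, $u=H_0(x,\cdot)$ (curl free in $U$) and $E=E^k$ (curl free, vanishing in $\Sigma=Y\setminus\overline U$): this gives $\int_Y E^k\wedge H_0=\e_k\wedge\oint H_0=\e_k\wedge\hat H$. Hence the left-hand side tends to $\int_\Omega\scalarproduct{\nabla\phi}{\e_k\wedge\hat H}=\int_\Omega\scalarproduct{\hat H}{\curl(\phi\,\e_k)}=\int_\Omega\phi\,(\curl\hat H)_k$, the last step being the definition of the distributional curl. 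Equating the two limits for all $\phi$ yields $(\curl\hat H)_k=-\i\omega\eps_0(\hat\eps\hat E)_k$ in $\Omega$.

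The main obstacle is this last step: one must (i) choose the oscillating test function so that its curl collapses to $\nabla\phi\wedge E^k(\cdot/\eta)$, which is exactly what the curl-freeness of $E^k$ provides, and (ii) convert the cell average $\int_Y E^k\wedge H_0$ into the \emph{geometric} average of $H_0$ via Corollary~\ref{corollary:property of geometric average}---this is where the simple-Helmholtz hypothesis on $Y\setminus\overline\Sigma$ and the precise definition of $\oint$ enter, and it is the reason $\hat H=\oint H_0$ (rather than the cell average) is the correct macroscopic magnetic field. A minor technical point is the admissibility of the merely $L^2_{\sharp}$ fields $E^k$ as two-scale test functions, which is handled by smoothing; note also that the computation is uniform across $\del R$ (outside $R$ the limit $E_0$ is $y$-independent and $\hat\eps=\id_3$), so~\eqref{eq:effective equations 2 - derivation of effective equations} indeed holds in all of $\Omega$ and thereby encodes the interface condition along $\del R$.
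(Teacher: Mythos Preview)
Your proof is correct and follows essentially the same route as the paper: you identify $E=\hat E$ and $H=\hat\mu\hat H$ via the cell-problem decompositions, read off the vanishing relations from Lemma~\ref{lemma:characterisation of index set N E} and Definition~\ref{definition:geometric average}, pass to weak limits for \eqref{eq:effective equations 1 - derivation of effective equations} and \eqref{eq:effective equations 3 - derivation of effective equations}, and obtain \eqref{eq:effective equations 2 - derivation of effective equations} with the oscillating test field $\phi(x)E^k(x/\eta)$ together with Corollary~\ref{corollary:property of geometric average}. The only cosmetic differences are the order of the steps and that you invoke $\curl\Heta=-\i\omega\eps_0\Eeta$ a.e.\ in $\Omega$ directly (justified by the $H^1$-regularity assumed for $\Heta$), whereas the paper phrases the same passage via the distributional formulation of~\eqref{eq:MaxSeq2-intro}.
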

\begin{proof}
 Thanks to the preparations
  of the last section, we can essentially
  follow~\cite{Lipton-Schweizer} to derive~\eqref{eq:effective equations 1 - derivation of effective equations}--\eqref{eq:effective equations 2 - derivation of effective equations}. The remaining
  relations \eqref{eq:effective equations 4 - derivation of effective
    equations} and \eqref{eq:effective equations 5 - derivation of
    effective equations} follow from the characterisation of the
  solution spaces of the cell problems.

 \textit{Step 1: Derivation of~\eqref{eq:effective equations 1 -
      derivation of effective equations}.} 
  The distributional limit of~\eqref{eq:MaxSeq1-intro} reads
  \begin{equation}\label{eq:distributional limit - proof of effective equations}
    \curl E = \i \omega \mu_0 H \quad \textrm{ in } \Omega\, .
  \end{equation}
  We recall that $E$ and $H$ are the weak
  $L_{\sharp}^2(Y;\C^3)$-limits of $(\Eeta)_{\eta}$ and
  $(\Heta)_{\eta}$, respectively.  By the definition of the limit
  $\hat{E}$ in \eqref{eq:limit fields} and the volume-average property of the two-scale limit
  $E_0$, we find that
\begin{equation*}
  \hat{E}(x) = \int_Y E_0(x, y) \d y = E(x)\, ,
\end{equation*}
for a.e. $x \in \Omega$.
Thus, $\curl E = \curl \hat{E}$.

On the other hand, using that $Y \setminus \overline{\Sigma}$ is a
simple Helmholtz domain, for almost all $(x,y) \in R\times Y$, the
two-scale limit $H_0$ can be written with coefficients $H_k(x)$ as
\begin{equation}\label{eq:proof of effective equations - 2}
  H_0(x,y) = \sum_{k \in \LH} H_k(x) H^k(y) \, ,
\end{equation}
by Proposition~\ref{proposition:for every element of the set L E there
  is a solution to the cell problem}. The averaging property of the
two-scale limit, the identity~\eqref{eq:proof of effective equations - 2}, and the definition of $\mu_{\eff}$ imply that 
\begin{equation}\label{eq:proof effective equations - 1}
  H(x) = \int_Y H_0(x, y) \d y 
       = \sum_{k \in \LH}H_k(x) \int_Y H^k(y) \d y 
       = \mu_{\textrm{eff}} \sum_{k \in  \LH} H_k(x) \e_k\, .
\end{equation}
Using the definition of the limit field $\hat{H}$ in~\eqref{eq:limit fields} and
identity~\eqref{eq:proof of effective equations - 2}, we conclude
from~\eqref{eq:proof effective equations - 1} that 
\begin{equation}\label{eq:proof of effective equations - 6}
  H(x) = \mu_{\textrm{eff}} \hat{H}(x) \, .
\end{equation}
Outside the meta-material the $L^2(Y;\C^3)$-weak limit $H$ and the
two-scale limit $H_0$ coincide due to Lemma~\ref{lemma:cell problem
  for H_0}. Moreover, $\hat{\mu}$ equals the identity, and
hence~\eqref{eq:proof of effective equations - 6} holds also in $\Omega \setminus R$. From~\eqref{eq:proof of effective equations - 6}
  and~\eqref{eq:distributional limit - proof of effective equations}
  we conclude \eqref{eq:effective equations 1 - derivation of effective equations}.

\textit{Step 2: Derivation of~\eqref{eq:effective equations 3 - derivation of effective equations}, ~\eqref{eq:effective equations 4 - derivation of effective equations} and~\eqref{eq:effective equations 5 - derivation of effective equations}.}
To prove~\eqref{eq:effective equations 3 - derivation of effective equations}, we first observe that $\Omega \setminus R \subset \Omega \setminus \Sigma_{\eta} $. We can therefore take the distributional limit in~\eqref{eq:MaxSeq2-intro} as $\eta$ tends to zero and obtain that
\begin{equation*}
  \curl H = - \i \omega \eps_0 E \quad \textrm{ in } \Omega \setminus R \, .
\end{equation*}
This shows~\eqref{eq:effective equations 3 - derivation of effective equations}, since $\hat{H} = H$ and $\hat{E} = E$ in $\Omega \setminus R$.

By Proposition~\ref{proposition: dimension of solution space to the
  cell problem of E}, the two-scale limit $E_0$ can be written with
coefficients $E_k(x)$ as $E_0(x, y) = \sum_{k \in \NE} E_k(x)
E^k(y)$. Due to the definition of the limit field $\hat{E}$
in~\eqref{eq:limit fields}, we find that
\begin{equation}\label{eq:proof effective equations - 7}
  \hat{E}(x) =  \int_{Y} \sum_{k \in \NE} E_k(x) E^k(y) \d y = \sum_{k \in \NE} E_k(x) \e_k\, , 
\end{equation}
for $x \in R$. Consequently, equation~\eqref{eq:effective equations 4 - derivation of effective equations} holds. Similarly, the definition of $\hat{H}$ and Proposition~\ref{proposition:for every element of the set L E there is a solution to the cell problem} imply that
\begin{equation*}
  \hat{H}(x) = \oint \sum_{k \in \LH} H_k(x) H^k(y) \d y = \sum_{k \in \LH} H_k(x) \e_k
\end{equation*}
for almost all $x \in R$. This proves~\eqref{eq:effective equations 5 - derivation of effective equations}.

\textit{Step 3: Derivation of~\eqref{eq:effective equations 2 -
    derivation of effective equations}.}  We use the defining property
of the two-scale convergence and appropriate oscillating test
functions. For $k \in \NE$ and $\theta \in C_c^{\infty}(\Omega; \R)$,
we set $\fhi(x, y) \coloneqq \theta(x) E^k(y)$ for $x\in \Omega$ and
$y \in Y$. We use $\fhi_{\eta}(x) \coloneqq \fhi(x, x/ \eta)$ for
$x \in \Omega$. From the two-scale convergence of $(\Heta)_{\eta}$, we
obtain that
\begin{align*}
  \lim_{\eta \to 0}\int_{\Omega}\scalarproduct{\Heta}{\curl \fhi_{\eta}}
  &= \int_{\Omega} \int_Y \scalarproduct[\big]{H_0(x,y)}{\nabla \theta(x) \wedge E^k(y)}\d y\d x\\
  &=- \int_{\Omega} \scalarproduct[\Big]{\nabla \theta (x)}{\int_Y H_0(x,y) \wedge E^k(y) \d y} \d x \, .
  \end{align*}
Thanks to Corollary~\ref{corollary:property of geometric average} on the geometric average, the identity
 \begin{equation*}
   \int_Y H_0(x,y) \wedge E^k(y) \d y = \oint H_0(x, \cdot) \wedge \e_k 
 \end{equation*}
holds for almost all $x \in \Omega$. Consequently, 
  \begin{align}
    \lim_{\eta \to 0}\int_{\Omega}\scalarproduct{\Heta}{\curl \fhi_{\eta}}
  &= -\int_{\Omega} \scalarproduct[\Big]{\nabla \theta(x)}{\oint H_0(x, \cdot) \wedge \e_k} \d x \nonumber\\
  &= \int_{\Omega} \scalarproduct[\big]{\hat{H}(x)}{\curl \big(\theta(x) \e_k \big)} \d x\, . \label{eq:proof of effective equations - 3}
\end{align}
On the other hand, $\Heta$ is a distributional solution to Maxwell's
equation~\eqref{eq:MaxSeq2-intro}. Thus, using~\eqref{eq:proof effective equations - 7}
\begin{align}
  \lim_{\eta \to 0} \int_{\Omega} \scalarproduct{\Heta}{\curl \fhi_{\eta}} 
  &= - \i \omega \eps_0 \lim_{\eta \to 0} \int_{\Omega} \scalarproduct{\Eeta}{\fhi_{\eta}} \nonumber\\
  &= - \i \omega \eps_0 \int_{\Omega} \Big(\int_Y \scalarproduct{E_0(x, y)}{ E^k(y)} \d y \Big) \theta(x)\d x \nonumber\\
  &= - \i \omega \eps_0 \sum_{l \in \NE} \int_{\Omega}(\eps_{\textrm{eff}})_{kl} E_l(x) \theta(x) \d x \nonumber \\
  &= \int_{\Omega} \scalarproduct[\big]{- \i \omega \eps_0  \eps_{\textrm{eff}} \hat{E}(x)}{ \theta(x)\e_k} \d x\, . \label{eq:proof of effective equations - 4}
\end{align}
As $\theta \in C_c^{\infty}(\Omega; \R)$ was chosen arbitrarily,~\eqref{eq:proof of effective equations - 3} and~\eqref{eq:proof of effective equations - 4} imply  that
\begin{equation*}
  (\curl \hat{H})_k = - \i \omega \eps_0 (\eps_{\textrm{eff}} \hat{E})_k \quad \textrm{ for all } k \in \NE\, .
\end{equation*}
This shows~\eqref{eq:effective equations 2 - derivation of effective
  equations} and hence the theorem is proved.
\end{proof}

\begin{remark}[Well-posedness of system~\eqref{eq:effective equations - derivation of effective equations}]
  We claim that the effective Maxwell system \eqref{eq:effective
    equations - derivation of effective equations} forms a complete
  set of equations. To be more precise, we show the following: Let $R
  \subset \Omega$ be a cuboid that is parallel to the axes and let
  $\Omega$ be a bounded
  Lipschitz domain. Assume that $\hat{\mu}$
  is real and positive definite on
  $V \coloneqq \spann \set{\e_k \given k \in \LH}$---that is, there is
  a constant $\alpha > 0$ such that
  $\scalarproduct{\hat{\mu} \xi}{\xi} \geq \alpha \abs{\xi}^2$ for all
  $\xi \in V$. We also assume that $\hat{\eps}$ is real and positive
  definite on $\spann \set{\e_k \given k \in \NE}$, and that
  $\mu_0 >0$, $\Re \eps_0 > 0$, and $\Im \eps_0 >0$. Let
  $(\hat{E}, \hat{H})$ be a solution to~\eqref{eq:effective equations
    - derivation of effective equations} with boundary condition
  $\hat{H} \wedge \nu = 0$ on $\del \Omega$, and assume that both
  $\hat{E}$ and $\hat{H}$ are of class $H^1$ in $R$ and in
  $\Omega \setminus R$.  We claim that $\hat{E}$ and $\hat{H}$ are
  trivial.

To prove that $\hat{E} = \hat{H} = 0$, we first show that the integration by parts
formula 
\begin{equation}\label{eq:remark step 2}
\int_{\Omega} \scalarproduct[\big]{\curl \hat{H}}{\hat{E}} = \int_{\Omega} \scalarproduct[\big]{\hat{H}}{\curl \hat{E}}\, .
\end{equation}
holds.  Equation~\eqref{eq:remark step 2} is a consequence of an
integration by parts provided that the integral
$\int_{\del R} \jump{\hat{H}} (\hat{E} \wedge \nu)$ vanishes, where
$\jump{\hat{H}}$ is the jump of the field $\hat{H}$ across the
boundary $\del R$ and $\nu$ is the outward unit normal vector on
$\del R$. As $\hat{E} \in H(\curl, \Omega)$, there holds
$\jump{\hat{E} \wedge \nu} = 0$ (i.e., the tangential components of $\hat{E}$ do
not jump). 

Let $\Gamma$ be one face of $R$. To prove
$\int_{\Gamma} \jump{\hat{H}} (\hat{E} \wedge \nu) = 0$, it suffices
to show that for all $k,l \in \set{1,2,3}$ with $k \neq l$ and
$\scalarproduct{\e_k}{\nu} = \scalarproduct{\e_l}{\nu} = 0$, we have
that $\hat{E}_k \jump{\hat{H_l}}_{\Gamma} = 0$. We obtain this
relation from~\eqref{eq:effective equations 2 - derivation of
  effective equations} and \eqref{eq:effective equations 4 -
  derivation of effective equations}: Indeed, for $k \notin \NE$,
there holds $\hat{E}_k = 0$ because of~\eqref{eq:effective equations 4
  - derivation of effective equations}. On the other hand,
by~\eqref{eq:effective equations 2 - derivation of effective
  equations}, for $k \in \NE$ there holds
$\del_m H_l - \del_l H_m = \mp \i \omega \eps_0 (\hat{\eps}
\hat{E})_k$ in the distributional sense, where $\nu = \e_m$. This
implies that $\jump{H_l}_{\Gamma} = 0$.

We now show that for almost all $x \in \Omega$
\begin{equation}\label{eq:remark step 1}
  \scalarproduct[\big]{\curl \hat{H}(x)}{\hat{E}(x)} = - \i \omega \eps_0 \scalarproduct[\big]{\hat{\eps} \hat{E}(x)}{\hat{E}(x)}\, .
\end{equation}
In $\Omega \setminus R$, this identity is a consequence
of~\eqref{eq:effective equations 3 - derivation of effective
  equations}. In $R$ on the other hand, we conclude
from~\eqref{eq:effective equations 4 - derivation of effective
  equations} and~\eqref{eq:effective equations 2 - derivation of
  effective equations} that, for $x \in R$,
\begin{equation*}
  \scalarproduct[\big]{\curl \hat{H}(x)}{\hat{E}(x)} = \sum_{k \in \NE} ( \curl \hat{H}(x) )_k \,  \overline{\hat{E}_k(x)} = - \i \omega \eps_0 \sum_{k \in \NE}  (\eps_{\eff} \hat{E}(x))_k \, \overline{\hat{E}_k(x)}\, .
\end{equation*}
Applying again~\eqref{eq:effective equations 4 - derivation of effective equations}, we obtain~\eqref{eq:remark step 1}.

From~\eqref{eq:effective equations 1 - derivation of effective
  equations}, the integration by parts formula~\eqref{eq:remark step
  2}, and~\eqref{eq:remark step 1}, we obtain
\begin{equation*}
  \int_{\Omega} \scalarproduct[\big]{\hat{\mu} \hat{H}}{\hat{H}} =
  -\frac{\i}{\omega \mu_0} \int_{\Omega} \scalarproduct[\big]{\curl
    \hat{E}}{\hat{H}} = -\frac{\i}{\omega \mu_0} \int_{\Omega}
  \scalarproduct[\big]{\hat{E}}{\curl \hat{H}}
  = \frac{\eps_0}{\mu_0} \scalarproduct[\big]{\hat{\eps}\hat{E}}{\hat{E}}\, .
\end{equation*}
As $\hat{\mu}$ and $\mu_0$ are assumed to be real, by taking the imaginary part, we
find that
$\int_{\Omega} \Im \eps_0 \scalarproduct{\hat{\eps} \hat{E}}{\hat{E}}
= 0$, and hence $\scalarproduct{\hat{\eps} \hat{E}}{\hat{E}} = 0$
almost everywhere in $\Omega$. This implies that $\hat{E} = 0$ in
$\Omega \setminus R$ and, taking into account that $\hat{\eps}$ is
positive definite on $\spann \set{\e_k \given k \in \NE}$, we also
find $\hat{E} = 0$ in $R$. We can therefore conclude
from~\eqref{eq:effective equations 1 - derivation of effective
  equations} that $\hat{H} $ vanishes in $\Omega \setminus R$ and that
$\hat{H}_k = 0$ in $R$ for all $k \in \LH$. On account
of~\eqref{eq:effective equations 5 - derivation of effective
  equations}, $\hat{H} = 0$ in $\Omega$. This shows the uniqueness of
solutions.
\end{remark}

\section{Discussion of examples} \label{section:examples}

In this section, we apply Theorem~\ref{theorem:macroscopic equations} to some examples.
In what follows, $d_E$ and $d_H$ denote the dimension of
the solution space to  the cell problem~\eqref{eq:cell problem E} of
$E_0$ and \eqref{eq:cell problem H_0} of $H_0$,
respectively.  Due to Propositions \ref{proposition: dimension of
  solution space to the cell problem of E} and~\ref{proposition:for
  every element of the set L E there is a solution to the cell
  problem}, we find that $d_E = \abs{\NE}$ and $d_H=\abs{\LH}$.

\subsection{The metal ball}\label{example:metal ball}

 To define the metallic ball structure, we fix a number $r \in (0, 1/2)$, and set
\begin{equation}\label{eq:definition metal ball}
  \Sigma \coloneqq \set{y = (y_1, y_2, y_3) \in Y \given y_1^2 + y_2^2 +y_3^2 < r^2 }\, .
\end{equation}
A sketch of the periodicity cell is given in Figure~\ref{fig:the metal
  ball}. We note that $Y \setminus \overline{\Sigma}$ is a simple
Helmholtz domain. As each two opposite faces of $Y$ can be connected
by a loop in $Y
\setminus \overline{\Sigma}$, we have that $\LH = \set{1,2,3}$. On the
other hand, we find no loop in $\Sigma$ that connects two opposite
faces of $Y$; hence $\NE = \set{1,2,3}$. To summarise, we have that
\begin{center}
  \begin{tabular}{ c | c | c | c   }
    $\NE$  & $d_E$ & $\LH$ & $d_H$\\ \hline
    $\set{1,2,3}$       & $3$    & $\set{1,2,3}$      & $3$ \\
  \end{tabular}\, .
\end{center}
The Maxwell system~\eqref{eq:effective equations - derivation of effective equations} is of the usual form; to be more precise, the following result holds.
\begin{corollary}[Macroscopic equations of the metal ball]
  For $\Sigma$ as in~\eqref{eq:definition metal ball}, a sequence
  $(\Eeta, \Heta)_{\eta}$, and limits $\hat{E}$ and $\hat{H}$ as in Theorem~\ref{theorem:macroscopic equations},  the macroscopic equations read
 \begin{subequations}\label{eq:effective equations for metal ball}
  \begin{align}
    \label{eq:effective equations for metal ball - 1}
   \curl \hat{E} &= \phantom{-} \i \omega \mu_0 \hat{\mu} \hat{H} \quad \textrm{ in } \Omega\, ,\\
    \label{eq:effective equations for metal ball - 2}
   \curl \hat{H}&= - \i \omega \eps_0 \hat{\eps}\hat{E} \: \,\quad \textrm{ in } \Omega \, .    
  \end{align}
  \end{subequations}
\end{corollary}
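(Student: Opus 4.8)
The plan is to obtain the corollary as a direct specialisation of Theorem~\ref{theorem:macroscopic equations}, feeding in the topological data already recorded for the ball. First I would check that the hypotheses of that theorem are in force: by the discussion preceding the statement, $Y \setminus \overline{\Sigma}$ is a simple Helmholtz domain, the sequence $(\Eeta, \Heta)_{\eta}$ satisfies the energy bound~\eqref{eq:energy-bound}, and the two index sets are
$\NE = \set{1,2,3}$ and $\LH = \set{1,2,3}$. Consequently the two \enquote{exceptional} sets shrink to nothing, namely $\set{1,2,3} \setminus \NE = \emptyset$ and $\NH = \set{1,2,3} \setminus \LH = \emptyset$. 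Since $\hat{E}$ and $\hat{H}$ are defined exactly as in~\eqref{eq:limit fields}, all assumptions of Theorem~\ref{theorem:macroscopic equations} are met and I may invoke~\eqref{eq:effective equations - derivation of effective equations}.

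Next I would read off~\eqref{eq:effective equations for metal ball} from the five relations of the theorem. Equation~\eqref{eq:effective equations 1 - derivation of effective equations} is verbatim~\eqref{eq:effective equations for metal ball - 1}, so the first line requires no further work. For the second line I would use that $\NE = \set{1,2,3}$: relation~\eqref{eq:effective equations 2 - derivation of effective equations}, valid componentwise for every $k \in \NE$, now holds for all three indices $k$ and therefore assembles into the full vector identity $\curl \hat{H} = -\i \omega \eps_0 \hat{\eps}\hat{E}$ throughout $\Omega$; this is consistent with~\eqref{eq:effective equations 3 - derivation of effective equations} on $\Omega \setminus R$ and yields~\eqref{eq:effective equations for metal ball - 2}. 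Finally I would observe that the forced-vanishing relations~\eqref{eq:effective equations 4 - derivation of effective equations} and~\eqref{eq:effective equations 5 - derivation of effective equations} are vacuous, as they are quantified over the empty sets $\set{1,2,3} \setminus \NE$ and $\NH$, respectively; hence no component of $\hat{E}$ or $\hat{H}$ is annihilated and the effective system keeps the classical Maxwell form.

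The only genuinely geometric input is the identification of the index sets and the simple-Helmholtz property, which I expect to be the sole (and essentially elementary) obstacle, the remainder being pure bookkeeping on Theorem~\ref{theorem:macroscopic equations}. For the ball these facts are transparent: $\Sigma$ is a convex body compactly contained in the interior of $Y$, so every curve lying in $\Sigma$ stays away from $\del Y$ and cannot join opposite faces, which forces $\NE = \set{1,2,3}$; conversely the connected set $Y \setminus \overline{\Sigma}$ contains, for each coordinate direction, a straight edge path connecting the corresponding opposite faces, so that each $\gamma_k$ is a $k$-loop and $\LH = \set{1,2,3}$. In particular the cell-problem solution spaces are both three-dimensional, so $\hat{\eps}$ and $\hat{\mu}$ are honest (non-degenerate) $3 \times 3$ matrices on $R$ and no drop of dimension occurs.
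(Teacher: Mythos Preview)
Your proposal is correct and follows exactly the route the paper intends: the corollary is stated without explicit proof because, after recording $\NE=\LH=\set{1,2,3}$ and the simple-Helmholtz property of $Y\setminus\overline{\Sigma}$, it is an immediate specialisation of Theorem~\ref{theorem:macroscopic equations}. Your reading-off of~\eqref{eq:effective equations for metal ball - 2} from the componentwise relation~\eqref{eq:effective equations 2 - derivation of effective equations} and your observation that~\eqref{eq:effective equations 4 - derivation of effective equations}--\eqref{eq:effective equations 5 - derivation of effective equations} are vacuous is precisely what the paper leaves implicit.
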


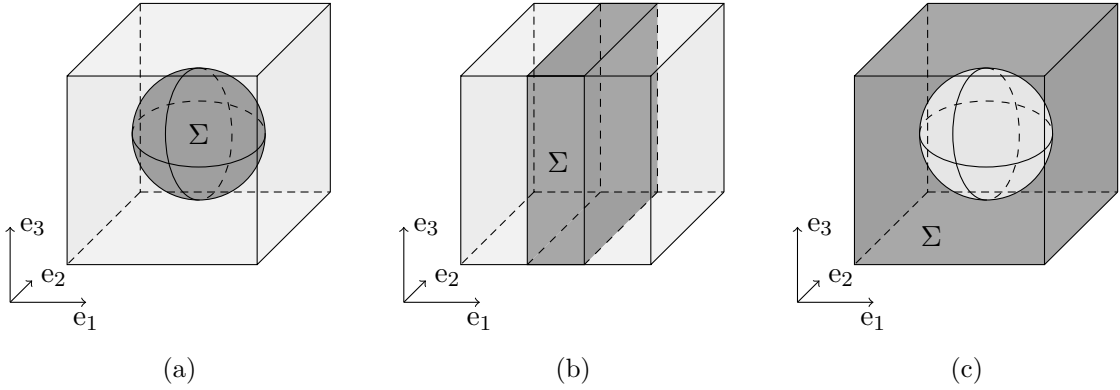
\begin{figure}
  \centering
  \begin{subfigure}[t]{0.3\textwidth}
    \begin{tikzpicture}[scale=2.5]
  \coordinate (A) at (-0.5, -0.5, 0.5);
  \coordinate (B) at (0.5,-0.5,0.5);
  \coordinate (C) at (0.5,0.5,0.5);
  \coordinate (D) at (-0.5, 0.5, 0.5);

  \coordinate (E) at (-0.5, 0.5, -0.5);
  \coordinate (F) at (0.5, 0.5, -0.5);
  \coordinate (G) at (0.5, -0.5, -0.5);
  \coordinate (H) at (-0.5, -0.5, -0.5);

  \fill[gray!20!white, opacity=.5] (A) -- (B) -- (C) -- (D) -- cycle;
  \fill[gray!20!white, opacity=.5] (E) -- (F) -- (G) --(H) -- cycle;
  \fill[gray!20!white, opacity=.5] (D) -- (E) -- (H) -- (A)-- cycle;
  \fill[gray!20!white, opacity=.5] (B) -- (C) -- (F) -- (G) -- cycle;
  
  \fill[gray!75!white] (0,0) circle (.35cm);
  
  \draw[] (A) -- (B) -- (C) -- (D) --cycle (E) -- (D) (F) -- (C) (G) -- (B);
  \draw[] (E) -- (F) -- (G) ;
  \draw[densely dashed] (E) -- (H) (H) -- (G) (H) -- (A);

  \draw[] (0,0) circle (.35cm);
  \draw[] (-.35,0) arc (180:360:.35cm and .175cm);        
  \draw[dashed] (-.35,0) arc (180:0:.35cm and .175cm);
  \draw[] (0,.35) arc (90:270:0.175cm and 0.35cm);
  \draw[dashed] (0,.35) arc (90:-90:0.175cm and 0.35cm);

  \draw[->] ( -.8, -.7, .5)--( -.4, -.7, .5 );
  \draw[->] (-.8, -.7, .5) -- (-.8, -.3, .5);
  \draw[->] (-.8, -.7, .5) -- (-.8, -.7, .2);

  \node[] at (0,0,0) {$\Sigma$};
  \node[] at (-.4, -.8, .5){$\e_1$};
  \node[] at (-.68, -.3, .5){$\e_3$};
  \node[] at (-.8, -.8, .-.1){$\e_2$};
  \end{tikzpicture}
   \caption{ }
   \label{fig:the metal ball}
  \end{subfigure}
\hfill
\begin{subfigure}[t]{0.3\textwidth}
  \begin{tikzpicture}[scale=2.5]
  \coordinate (A) at (-0.5, -0.5, 0.5);
  \coordinate (B) at (0.5,-0.5,0.5);
  \coordinate (C) at (0.5,0.5,0.5);
  \coordinate (D) at (-0.5, 0.5, 0.5);

  \coordinate (E) at (-0.5, 0.5, -0.5);
  \coordinate (F) at (0.5, 0.5, -0.5);
  \coordinate (G) at (0.5, -0.5, -0.5);
  \coordinate (H) at (-0.5, -0.5, -0.5);

  \coordinate (M1) at (-0.15, -0.5, 0.5);
  \coordinate (M2) at (0.15, -0.5, 0.5);
  \coordinate (M3) at (0.15, 0.5, 0.5);
  \coordinate (M4) at (-0.15, 0.5, 0.5);
  \coordinate (M5) at (-0.15, 0.5, -0.5);
  \coordinate (M6) at (0.15,0.5,-0.5);
  \coordinate (M7) at (0.15, -0.5, -0.5);
  \coordinate (M8) at (-0.15, -0.5, -0.5);

  \fill[gray!20!white, opacity=.5] (A) -- (B) -- (C) -- (D) -- cycle;
  \fill[gray!20!white, opacity=.5] (E) -- (F) -- (G) --(H) -- cycle;
  \fill[gray!20!white, opacity=.5] (D) -- (E) -- (H) -- (A)-- cycle;
  \fill[gray!20!white, opacity=.5] (B) -- (C) -- (F) -- (G) -- cycle;
  
  \fill[gray!75!white, opacity=.9] (M4) -- (M5) -- (M6) -- (M3) -- cycle;
  \fill[gray!75!white, opacity=.9] (M1) -- (M4) -- (M5) -- (M8) -- cycle;
  \fill[gray!75!white, opacity=.9] (M2) -- (M3) -- (M6) -- (M7) -- cycle;
  \fill[gray!75!white, opacity=.9] (M1) -- (M2) -- (M7) -- (M8) -- cycle;
  
  \draw[] (A) -- (B) -- (C) -- (D) --cycle (E) -- (D) (F) -- (C) (G) -- (B);
  \draw[] (E) -- (F) -- (G) ;
  \draw[densely dashed] (E) -- (H) (H) -- (G) (H) -- (A);

  \draw[] (M1) -- (M2) -- (M3) -- (M4) -- cycle;
  \draw[] (M4) -- (M5) -- (M6) -- (M3) -- cycle;
  \draw[dashed] (M5) -- (M8) (M7) -- (M6); 
  \draw[dashed] (M1) -- (M8)  (M7) -- (M2);
  
  \draw[->] ( -.8, -.7, .5)--( -.4, -.7, .5 );
  \draw[->] (-.8, -.7, .5) -- (-.8, -.3, .5);
  \draw[->] (-.8, -.7, .5) -- (-.8, -.7, .2);

  \node[] at (-0.18, -.15, 0) {$\Sigma$};
  \node[] at (-.4, -.8, .5){$\e_1$};
  \node[] at (-.68, -.3, .5){$\e_3$};
  \node[] at (-.8, -.8, .-.1){$\e_2$};
\end{tikzpicture}
  \caption{ }
  \label{fig:the metal plate}
\end{subfigure}
\hfill
\begin{subfigure}[t]{0.3\textwidth}
  \begin{tikzpicture}[scale=2.5]
  \coordinate (A) at (-0.5, -0.5, 0.5);
  \coordinate (B) at (0.5,-0.5,0.5);
  \coordinate (C) at (0.5,0.5,0.5);
  \coordinate (D) at (-0.5, 0.5, 0.5);

  \coordinate (E) at (-0.5, 0.5, -0.5);
  \coordinate (F) at (0.5, 0.5, -0.5);
  \coordinate (G) at (0.5, -0.5, -0.5);
  \coordinate (H) at (-0.5, -0.5, -0.5);

  \fill[gray!75!white, opacity=.9] (A) -- (B) -- (C) -- (D) -- cycle;
  \fill[gray!75!white, opacity=.9] (E) -- (F) -- (G) --(H) -- cycle;
  \fill[gray!75!white, opacity=.9] (D) -- (E) -- (H) -- (A)-- cycle;
  \fill[gray!75!white, opacity=.9] (B) -- (C) -- (F) -- (G) -- cycle;
  
  \fill[gray!20!white] (0,0) circle (.35cm);

  \draw[] (A) -- (B) -- (C) -- (D) --cycle (E) -- (D) (F) -- (C) (G) -- (B);
  \draw[] (E) -- (F) -- (G) ;
  \draw[densely dashed] (E) -- (H) (H) -- (G) (H) -- (A);

  \draw[] (0,0) circle (.35cm);
  \draw[] (-.35,0) arc (180:360:.35cm and .175cm);        
  \draw[dashed] (-.35,0) arc (180:0:.35cm and .175cm);
  \draw[] (0,.35) arc (90:270:0.175cm and 0.35cm);
  \draw[dashed] (0,.35) arc (90:-90:0.175cm and 0.35cm);

  \draw[->] ( -.8, -.7, .5)--( -.4, -.7, .5 );
  \draw[->] (-.8, -.7, .5) -- (-.8, -.3, .5);
  \draw[->] (-.8, -.7, .5) -- (-.8, -.7, .2);
  
  \node[] at (-.25, -0.5, .1) {$\Sigma$};
  \node[] at (-.4, -.8, .5){$\e_1$};
  \node[] at (-.68, -.3, .5){$\e_3$};
  \node[] at (-.8, -.8, .-.1){$\e_2$};
\end{tikzpicture}
  \caption{ }
  \label{fig:the air ball}
\end{subfigure}
\caption{The periodicity cell $Y$ is represented by the cube. (a)  The
  metal ball of Example~\ref{example:metal ball}. (b) The metal plate
  of Example~\ref{example:metal plate}. (c) A sketch of the air ball
  (see Remark~\ref{remark:effective equations of air ball}).}
\end{figure}

\subsection{The metal plate}\label{example:metal plate}

To define the metal plate structure, fix a number $\gamma \in (0, 1/2)$ and set
\begin{equation}\label{eq:definition metal plate}
  \Sigma \coloneqq \set{y = (y_1, y_2, y_3) \in Y \given y_1 \in (- \gamma, \gamma) }\, .
\end{equation}
We refer to Figure~\ref{fig:the metal plate} for a sketch of the periodicity cell $Y$. Observe that $Y \setminus \overline{\Sigma}$ is a simple Helmholtz domain. We obtain the table
\begin{center}
  \begin{tabular}{ c | c | c | c   }
    $\NE$  & $d_E$ & $\LH$ & $d_H$\\ \hline
    $\set{1}$             & $1$    & $\set{2,3}$         & $2$ \\
  \end{tabular}\, .
\end{center}
In fact, we do not only know the dimensions of the solution spaces
to~\eqref{eq:cell problem E} and \eqref{eq:cell problem H_0} but also
bases for these spaces. Indeed, for the volume fraction
$\alpha \coloneqq \abs{Y \setminus \overline{\Sigma}}$, the field
$E^1 \colon Y \to \C^3$ given by
$E^1(y) \coloneqq \e_1 \alpha^{-1} \indicator{Y \setminus
  \overline{\Sigma}}(y)$ is a solution to~\eqref{eq:cell problem E}
with $\meanint_Y E^1 = \e_1$. On the other hand, for
$k \in \set{2,3}$, the field $H^k \colon Y \to \C^3$,
$H^k(y) \coloneqq \e_k \indicator{Y \setminus \overline{\Sigma}}(y)$
is a solution to~\eqref{eq:cell problem H_0}. By the first part of the definition
of the geometric average, $\oint H^k = \e_k$ and hence
$\set{H^2, H^3}$ is a basis of the solution space to~\eqref{eq:cell
  problem H_0}. Having the basis for the solution spaces at hand, we
can compute $\eps_{\eff}$ and $\mu_{\eff}$ defined in
\eqref{eq:definition eps eff} and \eqref{eq:definition mu eff}: we have that
$\eps_{\eff} = \alpha^{-1} \diag(1, 0, 0)$ and
$\mu_{\eff} = \alpha \, \diag(0, 1, 1)$.  An application of
Theorem~\ref{theorem:macroscopic equations} yields the effective
equations for the metal plate.

\begin{corollary}[Macroscopic equations for the metal plate]
  Let $\Sigma$ be as in~\eqref{eq:definition metal plate} and let $\alpha \coloneqq \abs{Y \setminus
  \overline{\Sigma}}$. For a sequence
  $(\Eeta, \Heta)_{\eta}$, and limit fields $\hat{E}$ and $\hat{H}$ as
  in Theorem~\ref{theorem:macroscopic equations}, the macroscopic equations read
  \begin{subequations}\label{eq:effective equations for metal plate}
  \begin{align}
    \label{eq:effective equations for metal plate - 1}
   \curl \hat{E} &= \phantom{-} \i \omega \mu_0 \hat{H} \phantom{-}\qquad \: \: \;\textrm{ in } \Omega \setminus R\, ,\\
    \label{eq:effective equations for metal plate - 2}
   \curl \hat{H}&= - \i \omega \eps_0 \hat{E} \: \qquad
                  \:\:\;\phantom{-} \textrm{ in } \Omega \setminus R\,
                  ,  \\        
    \label{eq:effective equations for metal plate - 6}
    (\del_3, -\del_2 ) \hat{E}_1  &=\phantom{-} \i \omega \mu_0\alpha (\hat{H}_2, \hat{H}_3) \:  \,\textrm{ in } R \, ,\\
    \label{eq:effective equations for metal plate - 3}
    \del_2 \hat{H}_3- \del_3 \hat{H}_2 &= - \i \omega \eps_0 \alpha^{-1} \hat{E}_1\phantom{-}\quad \, \textrm{ in } R \, ,\\
    \label{eq:effective equations for metal plate - 4}
    \hat{E}_2=\hat{E}_3 &= 0 \qquad \qquad \quad \: \: \: \: \:  \;\phantom{-}\textrm{ in } R\, , \\
    \label{eq:effective equations for metal plate - 5}
                      \hat{H}_1&= 0 \qquad \qquad \quad \:\:\:\: \: \;\phantom{-} \textrm{ in } R \, .
  \end{align}
  \end{subequations}
\end{corollary}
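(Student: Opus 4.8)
The plan is to obtain the corollary as a direct specialisation of Theorem~\ref{theorem:macroscopic equations} to the slab geometry~\eqref{eq:definition metal plate}, for which $Y \setminus \overline{\Sigma}$ is a simple Helmholtz domain and the theorem therefore applies. First I would record the data already assembled in the text preceding the statement: the index sets are $\NE = \set{1}$ and $\LH = \set{2,3}$, so that $\NH = \set{1}$ and $\set{1,2,3} \setminus \NE = \set{2,3}$; the cell solutions are $E^1(y) = \e_1 \alpha^{-1} \indicator{Y \setminus \overline{\Sigma}}(y)$ and $H^k(y) = \e_k \indicator{Y \setminus \overline{\Sigma}}(y)$ for $k \in \set{2,3}$; and a one-line integration in~\eqref{eq:definition eps eff} and~\eqref{eq:definition mu eff}, using $\meanint_Y E^1 = \e_1$ and $\oint H^k = \e_k$, gives $\eps_{\eff} = \alpha^{-1}\diag(1,0,0)$ and $\mu_{\eff} = \alpha\,\diag(0,1,1)$.

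With these data fixed, each line of the stated system is produced by restricting one of the five relations of Theorem~\ref{theorem:macroscopic equations} to $R$ or to $\Omega \setminus R$ and inserting the coefficients. The vanishing relations~\eqref{eq:effective equations for metal plate - 4} and~\eqref{eq:effective equations for metal plate - 5} are literally~\eqref{eq:effective equations 4 - derivation of effective equations} and~\eqref{eq:effective equations 5 - derivation of effective equations} read with $\set{1,2,3}\setminus\NE = \set{2,3}$ and $\NH = \set{1}$. Outside the meta-material one has $\hat{\mu} = \hat{\eps} = \id_3$, so~\eqref{eq:effective equations 1 - derivation of effective equations} and~\eqref{eq:effective equations 3 - derivation of effective equations} restrict to~\eqref{eq:effective equations for metal plate - 1} and~\eqref{eq:effective equations for metal plate - 2}.

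The two genuinely new relations inside $R$ come from taking components and using the vanishing of $\hat{E}_2, \hat{E}_3$ and $\hat{H}_1$ on $R$. For~\eqref{eq:effective equations for metal plate - 3} I would take $k = 1 \in \NE$ in~\eqref{eq:effective equations 2 - derivation of effective equations}: since $(\eps_{\eff}\hat{E})_1 = \alpha^{-1}\hat{E}_1$ and $(\curl\hat{H})_1 = \del_2\hat{H}_3 - \del_3\hat{H}_2$, this is exactly the stated equation. For~\eqref{eq:effective equations for metal plate - 6} I would restrict~\eqref{eq:effective equations 1 - derivation of effective equations} to $R$, where $\hat{\mu}\hat{H} = \mu_{\eff}\hat{H} = \alpha(0,\hat{H}_2,\hat{H}_3)$ because $\hat{H}_1 = 0$; reading off the second and third components of $\curl\hat{E}$ and dropping $\del_1\hat{E}_3$ and $\del_1\hat{E}_2$, which vanish since $\hat{E}_3 = \hat{E}_2 = 0$ in $R$, yields $(\del_3\hat{E}_1, -\del_2\hat{E}_1) = \i\omega\mu_0\alpha(\hat{H}_2,\hat{H}_3)$. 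The first component of~\eqref{eq:effective equations 1 - derivation of effective equations} carries no information, since both sides vanish on $R$.

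I do not expect a genuine obstacle: the corollary is pure bookkeeping on top of Theorem~\ref{theorem:macroscopic equations} and the dimension counts of Propositions~\ref{proposition: dimension of solution space to the cell problem of E} and~\ref{proposition:for every element of the set L E there is a solution to the cell problem}. The only point deserving a moment's care is that the simplifications of the two curl operators are legitimate in the distributional sense; but since $\hat{E}_2 = \hat{E}_3 = 0$ and $\hat{H}_1 = 0$ hold as $L^2$-identities on $R$, their distributional derivatives inside $R$ vanish as well, so discarding the terms $\del_1\hat{E}_2$, $\del_1\hat{E}_3$ and the $\hat{H}_1$-contribution to $\mu_{\eff}\hat{H}$ is justified. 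The content of the corollary is therefore just the explicit rendering of the abstract system~\eqref{eq:effective equations - derivation of effective equations} in the coordinates dictated by $\NE = \set{1}$ and $\LH = \set{2,3}$.
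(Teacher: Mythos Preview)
Your proposal is correct and follows exactly the route the paper takes: the paper does not give a separate proof for this corollary but simply states that, with $\eps_{\eff} = \alpha^{-1}\diag(1,0,0)$ and $\mu_{\eff} = \alpha\,\diag(0,1,1)$ computed from the explicit cell solutions, ``an application of Theorem~\ref{theorem:macroscopic equations} yields the effective equations for the metal plate.'' Your write-up merely spells out this application component by component, which is precisely the intended bookkeeping.
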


\subsection{The air cylinder}\label{example:air cylinder}

To define the metallic box with a cylinder removed, we fix a number $r \in (0, 1/2)$ and set 
\begin{equation*}
  \Sigma \coloneqq Y \setminus \set{y = (y_1, y_2, y_3) \in Y \given y_1^2 + y_2^2 < r^2 }\, .
\end{equation*}
A sketch of the periodicity cell is given in Figure~\ref{fig:the air cylinder}. The air cylinder $Y \setminus \overline{\Sigma}$ is a simple Helmholtz domain. We obtain
\begin{center}
  \begin{tabular}{ c | c | c | c   }
    $\NE$  & $d_E$ & $\LH$ & $d_H$\\ \hline
    $\emptyset$       & $0$    & $\set{3}$            & $1$ \\
  \end{tabular}\, .
\end{center}
Once again, we do not only know the dimension of the solution space to~\eqref{eq:cell problem H_0} but also its basis. Indeed, the field $H^3 \colon Y \to \C^3$ given by $H^3(y) \coloneqq \e_k \indicator{Y \setminus \overline{\Sigma}}(y)$ is a solution to~\eqref{eq:cell problem H_0}. We can thus compute $\eps_{\eff}$ and $\mu_{\eff}$; for $\alpha \coloneqq \abs{Y \setminus \overline{\Sigma}}$, we find that $\eps_{\eff} = 0$ and $\mu_{\eff} =\alpha \, \diag(0, 0, 1)$.
Although the solution space to the cell problem of $H_0$ is not
trivial, there is only the trivial solution to the effective equations
in $R$; that is,
\begin{equation*}
  \hat{E} = \hat{H}= 0 \quad \textrm{ in } R\, .
\end{equation*}
 Note that $\hat{H}_3= 0$ is a consequence
of~\eqref{eq:MaxSeq1-intro} and $\hat{E} = 0$ in $R$.

\begin{remark}\label{remark:effective equations of air ball}
  Instead of the air cylinder, we can also consider the air ball (see
  Figure~\ref{fig:the air ball} for a sketch) and find that there are
  also only the trivial solutions $\hat{E} = \hat{H}= 0$ in $R$.
\end{remark}

\begin{figure}
  \centering
  \begin{subfigure}[t]{0.3\textwidth}
    \begin{tikzpicture}[scale=2.5]
  \coordinate (A) at (-0.5, -0.5, 0.5);
  \coordinate (B) at (0.5,-0.5,0.5);
  \coordinate (C) at (0.5,0.5,0.5);
  \coordinate (D) at (-0.5, 0.5, 0.5);

  \coordinate (E) at (-0.5, 0.5, -0.5);
  \coordinate (F) at (0.5, 0.5, -0.5);
  \coordinate (G) at (0.5, -0.5, -0.5);
  \coordinate (H) at (-0.5, -0.5, -0.5);

  \fill[gray!75!white, opacity=.9] (A) -- (B) -- (C) -- (D) -- cycle;
  \fill[gray!75!white, opacity=.9] (E) -- (F) -- (G) --(H) -- cycle;
  \fill[gray!75!white, opacity=.9] (D) -- (E) -- (H) -- (A)-- cycle;
  \fill[gray!75!white, opacity=.9] (B) -- (C) -- (F) -- (G) -- cycle;
  
  \fill[gray!20!white] (-.25, -.5) -- (-.25,.5) 
                                arc(180:0:0.25cm and 0.125cm) -- (0.25, -0.5) 
                                (-0.25, -.5) arc(180:360:0.25cm and 0.125cm);
  \filldraw[gray!20!white] (-.25,-.5) -- (0.25,-.5);
  
  \draw[] (A) -- (B) -- (C) -- (D) --cycle (E) -- (D) (F) -- (C) (G) -- (B);
  \draw[] (E) -- (F) -- (G) ;
  \draw[densely dashed] (E) -- (H) (H) -- (G) (H) -- (A);

  \draw[] (-.25,0.5) arc (180:-180:0.25cm and 0.125cm);
  \draw[dashed] (-.25,-.5) arc (180:360:0.25cm and 0.125cm);
  \draw[dashed] (-.25,-.5) arc (180:0:0.25cm and 0.125cm);
  \draw[dashed] (-0.25, -.5) -- (-0.25, .5);
  \draw[dashed] (.25,-.5) -- (.25,.5);

  \draw[->] ( -.8, -.7, .5)--( -.4, -.7, .5 );
  \draw[->] (-.8, -.7, .5) -- (-.8, -.3, .5);
  \draw[->] (-.8, -.7, .5) -- (-.8, -.7, .2);

  \node[] at (-.25, .1, .55) {$\Sigma$};
  \node[] at (-.4, -.8, .5){$\e_1$};
  \node[] at (-.68, -.3, .5){$\e_3$};
  \node[] at (-.8, -.8, .-.1){$\e_2$};
\end{tikzpicture}
  \caption{ }
  \label{fig:the air cylinder}
  \end{subfigure}
\hfill
\begin{subfigure}[t]{0.3\textwidth}
  \begin{tikzpicture}[scale=2.5]
  \coordinate (A) at (-0.5, -0.5, 0.5);
  \coordinate (B) at (0.5,-0.5,0.5);
  \coordinate (C) at (0.5,0.5,0.5);
  \coordinate (D) at (-0.5, 0.5, 0.5);

  \coordinate (E) at (-0.5, 0.5, -0.5);
  \coordinate (F) at (0.5, 0.5, -0.5);
  \coordinate (G) at (0.5, -0.5, -0.5);
  \coordinate (H) at (-0.5, -0.5, -0.5);

  \fill[gray!20!white, opacity=.5] (A) -- (B) -- (C) -- (D) -- cycle;
  \fill[gray!20!white, opacity=.5] (E) -- (F) -- (G) --(H) -- cycle;
  \fill[gray!20!white, opacity=.5] (D) -- (E) -- (H) -- (A)-- cycle;
  \fill[gray!20!white, opacity=.5] (B) -- (C) -- (F) -- (G) -- cycle;
  
  \fill[gray!75!white] (-.25, -.5) -- (-.25,.5) 
                                arc(180:0:0.25cm and 0.125cm) -- (0.25, -0.5) 
                                (-0.25, -.5) arc(180:360:0.25cm and 0.125cm);
  \filldraw[gray!75!white] (-.25,-.5) -- (0.25,-.5);
  
  \draw[] (A) -- (B) -- (C) -- (D) --cycle (E) -- (D) (F) -- (C) (G) -- (B);
  \draw[] (E) -- (F) -- (G) ;
  \draw[densely dashed] (E) -- (H) (H) -- (G) (H) -- (A);

  \draw[] (-.25,0.5) arc (180:-180:0.25cm and 0.125cm);
  \draw[dashed] (-.25,-.5) arc (180:360:0.25cm and 0.125cm);
  \draw[dashed] (-.25,-.5) arc (180:0:0.25cm and 0.125cm);
  \draw[dashed] (-0.25, -.5) -- (-0.25, .5);
  \draw[dashed] (.25,-.5) -- (.25,.5);

  \draw[->] ( -.8, -.7, .5)--( -.4, -.7, .5 );
  \draw[->] (-.8, -.7, .5) -- (-.8, -.3, .5);
  \draw[->] (-.8, -.7, .5) -- (-.8, -.7, .2);

  \node[] at (0,0,0) {$\Sigma$};
\node[] at (-.4, -.8, .5){$\e_1$};
  \node[] at (-.68, -.3, .5){$\e_3$};
  \node[] at (-.8, -.8, .-.1){$\e_2$};
  
\end{tikzpicture}
  \caption{ }
  \label{fig:the metal cylinder}
\end{subfigure}
\hfill
\begin{subfigure}[t]{0.3\textwidth}
  \begin{tikzpicture}[scale=2.5]
  \coordinate (A) at (-0.5, -0.5, 0.5);
  \coordinate (B) at (0.5,-0.5,0.5);
  \coordinate (C) at (0.5,0.5,0.5);
  \coordinate (D) at (-0.5, 0.5, 0.5);

  \coordinate (E) at (-0.5, 0.5, -0.5);
  \coordinate (F) at (0.5, 0.5, -0.5);
  \coordinate (G) at (0.5, -0.5, -0.5);
  \coordinate (H) at (-0.5, -0.5, -0.5);

  \fill[gray!20!white, opacity=.5] (A) -- (B) -- (C) -- (D) -- cycle;
  \fill[gray!20!white, opacity=.5] (E) -- (F) -- (G) --(H) -- cycle;
  \fill[gray!20!white, opacity=.5] (D) -- (E) -- (H) -- (A)-- cycle;
  \fill[gray!20!white, opacity=.5] (B) -- (C) -- (F) -- (G) -- cycle;
  
  \draw[rotate=0] (0,0) ellipse (15pt and 7.5pt);
  \fill[gray!75!white] (0,0) ellipse (15pt and 7.5pt);
  \draw[black] (-.2,0,0) to[bend left] (.2,0,0);
  \fill[gray!20!white]  (-.2,0,0) to[bend left] (.2,0,0);
  \fill[gray!20!white] (-.2,0, 0) to[out=346, in=198] (.2,0,0);
  \draw[black] (-.35,.07) to[bend right] (.35,.07);
  
    \draw[] (A) -- (B) -- (C) -- (D) --cycle (E) -- (D) (F) -- (C) (G) -- (B);
  \draw[] (E) -- (F) -- (G) ;
  \draw[densely dashed] (E) -- (H) (H) -- (G) (H) -- (A);

  \draw[->] ( -.8, -.7, .5)--( -.4, -.7, .5 );
  \draw[->] (-.8, -.7, .5) -- (-.8, -.3, .5);
  \draw[->] (-.8, -.7, .5) -- (-.8, -.7, .2);
  
  \node[] at (-0.15, -.13, 0) {$\Sigma$};
  \node[] at (-.4, -.8, .5){$\e_1$};
  \node[] at (-.68, -.3, .5){$\e_3$};
  \node[] at (-.8, -.8, .-.1){$\e_2$};
  
\end{tikzpicture}
  \caption{ }
  \label{fig:torus as metal part}
\end{subfigure}
\caption{The periodicity cell $Y$ is represented by the cube. (a) The
  air cylinder $Y \setminus \overline{\Sigma}$ of Example~\ref{example:air
  cylinder} (b) The metal cylinder $\Sigma$ of Example~\ref{example:metal cylinder}. (c) $\Sigma$ is a $2$-dimensional full torus. In this case, $Y \setminus \overline{\Sigma}$ is not a simple Helmholtz domain.}
\end{figure}
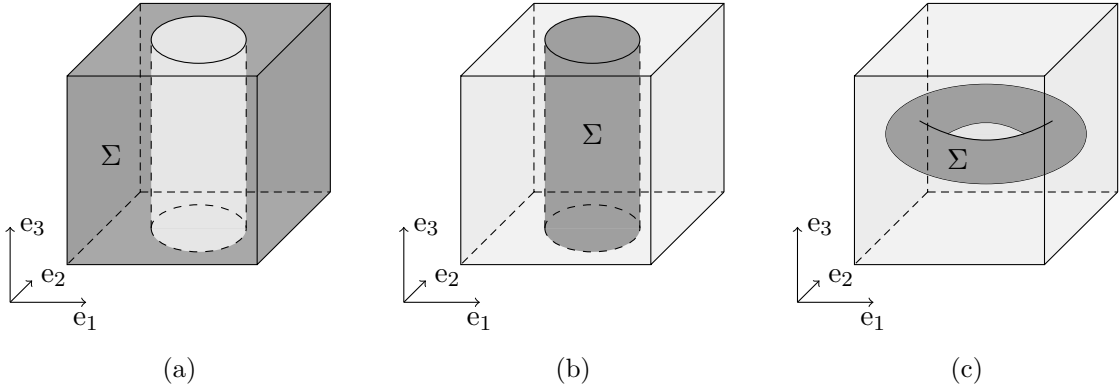
\subsection{The metal cylinder}\label{example:metal cylinder}

Fix a number $r \in (0, 1/2)$. To model a metallic cylinder,  $\Sigma$ is defined as the set
\begin{equation}\label{eq:definition metal cylinder}
  \Sigma \coloneqq \set{y = (y_1, y_2, y_3) \in Y \given y_1^2 + y_2^2 < r^2 }\, .
\end{equation}
A sketch of the periodicity cell is given in Figure~\ref{fig:the metal cylinder}. 

We claim that $Y \setminus \overline{\Sigma}$ is a simple Helmholtz domain.
Indeed, there are only the three standard nontrivial loops in $Y
\setminus \overline{\Sigma}$; namely, $\gamma_1, \gamma_2$, and
$\gamma_3$, which are given by $\gamma_1(t) \coloneqq (t - 1/2, -1/2,
-1/2)$, $\gamma_{2}(t) \coloneqq (-1/2, t-1/2, -1/2)$, and
$\gamma_3(t) \coloneqq (-1/2, -1/2, t-1/2)$ for $t \in [0, 1]$. Thus, every $L_{\sharp}^2(Y; \C^3)$-vector field $u$ that is curl free in $Y \setminus \overline{\Sigma}$ can be written as $u = \nabla \Theta + c_0$ for $\Theta \in H_{\sharp}^1(Y \setminus \overline{\Sigma})$ and $c_0 \in \C^3$. 

We find the table
\begin{center}
  \begin{tabular}{ c | c | c | c   }
    $\NE$  & $d_E$ & $\LH$ & $d_H$\\ \hline
    $\set{1,2}$          & $2$    & $\set{1,2,3}$            & $3$ \\
  \end{tabular}\, .
\end{center}
As for the metal plate, we find an interesting non-trivial limit system.
\begin{corollary}[Macroscopic equations for the metal cylinder]
  For $\Sigma$ as in~\eqref{eq:definition metal cylinder}, a
  sequence $(\Eeta, \Heta)_{\eta}$, and limit fields $\hat{E}$ and $\hat{H}$ as in Theorem~\ref{theorem:macroscopic equations}, the macroscopic equations read
  \begin{subequations}\label{eq:effective equations for metal cylinder}
  \begin{align}
    \label{eq:effective equations for metal cylinder - 1}
    \curl \hat{E} &= \phantom{-} \i \omega \mu_0 \hat{\mu} \hat{H} \quad \:\:\:\, \textrm{ in } \Omega \, ,\\
    \label{eq:effective equations for metal cylinder - 2}
    \curl \hat{H}&= - \i \omega \eps_0 \hat{E} \: \qquad \:\: \textrm{ in } \Omega \setminus R\, ,  \\              
 \label{eq:effective equations for metal cylinder - 3}
    \del_2 \hat{H}_3- \del_3 \hat{H}_2 &= - \i \omega \eps_0 (\hat{\eps} \hat{E})_1 \quad  \textrm{ in } \Omega \, ,\\
    \label{eq:effective equations for metal cylinder - 4}
           \del_3 \hat{H}_1 - \del_1 \hat{H}_3 &= - \i \omega \eps_0 (\hat{\eps} \hat{E})_2\quad \textrm{ in } \Omega\, , \\
    \label{eq:effective equations for metal cylinder - 5}
    \hat{E}_3 &= 0 \qquad \qquad \quad \:\:\: \,  \textrm{ in } R\, .
  \end{align}
  \end{subequations}
\end{corollary}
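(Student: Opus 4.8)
The plan is to obtain this corollary as a direct specialisation of Theorem~\ref{theorem:macroscopic equations}; no new analysis is required, and the effective coefficients $\hat{\eps}$ and $\hat{\mu}$ need not even be computed explicitly (in contrast to the metal plate). All the actual work lies in verifying the hypothesis of the theorem and in identifying the index sets $\NE$, $\LE$, $\LH$, and $\NH$, after which the five equations are read off mechanically.

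First I would confirm that $Y \setminus \overline{\Sigma}$ is a simple Helmholtz domain, which is the content of the paragraph immediately preceding the corollary: the only topologically non-trivial loops in the complement of the cylinder are the three standard edge-paths $\gamma_1, \gamma_2, \gamma_3$, so every curl-free $L_{\sharp}^2(Y; \C^3)$-field admits the representation $u = \nabla \Theta + c_0$ with $\Theta \in H_{\sharp}^1(Y \setminus \overline{\Sigma})$ and $c_0 \in \C^3$. Next I would pin down the index sets. Since the cylinder $\Sigma$ is aligned with the $\e_3$-axis and has radius $r < 1/2$, a path in $\Sigma$ can wind along the axis (a $3$-loop), whereas $\Sigma$ stays strictly inside the slabs $\abs{y_1} < 1/2$ and $\abs{y_2} < 1/2$, so by Lemma~\ref{lemma:existence of potentials in case of no k-loop} no $1$-loop or $2$-loop exists; hence $\LE = \set{3}$ and $\NE = \set{1,2}$. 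For the complement, the three edge-loops $\gamma_1, \gamma_2, \gamma_3$ all avoid $\overline{\Sigma}$, so $\LH = \set{1,2,3}$ and $\NH = \emptyset$, which reproduces the table in the text.

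Finally I would substitute these sets into~\eqref{eq:effective equations - derivation of effective equations}. The Faraday law~\eqref{eq:effective equations 1 - derivation of effective equations} is carried over verbatim as~\eqref{eq:effective equations for metal cylinder - 1}. Since $\set{1,2,3} \setminus \NE = \set{3}$, equation~\eqref{eq:effective equations 4 - derivation of effective equations} gives $\hat{E}_3 = 0$ in $R$, which is~\eqref{eq:effective equations for metal cylinder - 5}; and because $\NH = \emptyset$, the constraint~\eqref{eq:effective equations 5 - derivation of effective equations} is empty, so no component of $\hat{H}$ is forced to vanish. The full Ampère law~\eqref{eq:effective equations 3 - derivation of effective equations} yields~\eqref{eq:effective equations for metal cylinder - 2} in $\Omega \setminus R$, using that $\hat{\eps} = \id_3$ there. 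Writing out the two components $k = 1, 2$ of the componentwise Ampère law~\eqref{eq:effective equations 2 - derivation of effective equations}, which holds on all of $\Omega$, produces precisely $\del_2 \hat{H}_3 - \del_3 \hat{H}_2 = -\i \omega \eps_0 (\hat{\eps} \hat{E})_1$ and $\del_3 \hat{H}_1 - \del_1 \hat{H}_3 = -\i \omega \eps_0 (\hat{\eps} \hat{E})_2$, that is,~\eqref{eq:effective equations for metal cylinder - 3} and~\eqref{eq:effective equations for metal cylinder - 4}; note that the third component $(\curl \hat{H})_3$ inside $R$ is deliberately left unconstrained, consistent with $3 \notin \NE$.

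The only obstacle, modest as it is, is the topological bookkeeping of the second step: rigorously ruling out $1$- and $2$-loops in $\Sigma$ (by observing that any lift can shift only in the $\e_3$-direction, since $\Sigma$ projects into the simply connected disk $\set{y_1^2 + y_2^2 < r^2}$ bounded away from the faces $\Gamma_1^{(l,r)}$ and $\Gamma_2^{(l,r)}$) and exhibiting the three independent loops in the complement. This is exactly what underlies both the simple Helmholtz property and the values $\NE = \set{1,2}$, $\LH = \set{1,2,3}$ on which the entire reduction rests.
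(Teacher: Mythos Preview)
Your proposal is correct and follows exactly the paper's approach: identify $\NE = \set{1,2}$, $\LH = \set{1,2,3}$, note that $\hat{\eps} = \id_3$ outside $R$, and read off each equation from the corresponding line of Theorem~\ref{theorem:macroscopic equations}. One small slip: the clause ``so by Lemma~\ref{lemma:existence of potentials in case of no k-loop} no $1$-loop or $2$-loop exists'' inverts the direction of that lemma (it produces a potential \emph{from} the absence of a loop, not the other way round); the correct justification is the direct geometric one you give in your final paragraph.
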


\begin{proof}
  We have that $\LH = \set{1,2,3}$, $\NE = \set{1,2}$, and $\hat{\eps} = \id_3$ in $\Omega \setminus R$. Thus,~\eqref{eq:effective equations for metal cylinder - 1} and~\eqref{eq:effective equations for metal cylinder - 2} follow from~\eqref{eq:effective equations 1 - derivation of effective equations} and~\eqref{eq:effective equations 3 - derivation of effective equations}, respectively. Equations~\eqref{eq:effective equations for metal cylinder - 3} and~\eqref{eq:effective equations for metal cylinder - 4} follow from~\eqref{eq:effective equations 2 - derivation of effective equations}, and~\eqref{eq:effective equations for metal cylinder - 5} is a consequence of~\eqref{eq:effective equations 4 - derivation of effective equations}.
\end{proof}

\section*{Funding}
Support of both authors by DFG grant Schw 639/6-1 is gratefully
acknowledged.

\bibliographystyle{abbrv} 
\bibliography{lit-maixwell-2}

\def\cprime{$'$}
\begin{thebibliography}{10}

\bibitem{Allaire1992}
G.~Allaire.
\newblock Homogenization and two-scale convergence.
\newblock {\em SIAM J. Math. Anal.}, 23(6):1482--1518, 1992.

\bibitem{MR1694448}
G.~Bao and D.~C. Dobson.
\newblock On the scattering by a biperiodic structure.
\newblock {\em Proc. Amer. Math. Soc.}, 128(9):2715--2723, 2000.

\bibitem{MR1335399}
G.~Bao, D.~C. Dobson, and J.~A. Cox.
\newblock Mathematical studies in rigorous grating theory.
\newblock {\em J. Opt. Soc. Amer. A}, 12(5):1029--1042, 1995.

\bibitem{MR1273315}
A.-S. Bonnet-Bendhia and F.~Starling.
\newblock Guided waves by electromagnetic gratings and nonuniqueness examples
  for the diffraction problem.
\newblock {\em Math. Methods Appl. Sci.}, 17(5):305--338, 1994.

\bibitem{BouchitteBourel2009}
G.~Bouchitt{\'e}, C.~Bourel, and D.~Felbacq.
\newblock Homogenization of the 3{D} {M}axwell system near resonances and
  artificial magnetism.
\newblock {\em C. R. Math. Acad. Sci. Paris}, 347(9-10):571--576, 2009.

\bibitem{MR2262964}
G.~Bouchitt{\'e} and D.~Felbacq.
\newblock Homogenization of a wire photonic crystal: the case of small volume
  fraction.
\newblock {\em SIAM J. Appl. Math.}, 66(6):2061--2084, 2006.

\bibitem{BouchitteSchweizer-Max}
G.~Bouchitt{\'e} and B.~Schweizer.
\newblock Homogenization of {M}axwell's equations in a split ring geometry.
\newblock {\em Multiscale Model. Simul.}, 8(3):717--750, 2010.

\bibitem{BouchitteSchweizer-Plasmons}
G.~Bouchitt{\'e} and B.~Schweizer.
\newblock Plasmonic waves allow perfect transmission through sub-wavelength
  metallic gratings.
\newblock {\em Netw. Heterog. Media}, 8(4):857--878, 2013.

\bibitem{ChenLipton2010}
Y.~Chen and R.~Lipton.
\newblock Tunable double negative band structure from non-magnetic coated rods.
\newblock {\em New Journal of Physics}, 12(8):083010, 2010.

\bibitem{ChenLipton2013}
Y.~Chen and R.~Lipton.
\newblock Resonance and double negative behavior in metamaterials.
\newblock {\em Arch. Ration. Mech. Anal.}, 11(1):192--212, 2013.

\bibitem{MR1160941}
D.~Dobson and A.~Friedman.
\newblock The time-harmonic {M}axwell equations in a doubly periodic structure.
\newblock {\em J. Math. Anal. Appl.}, 166(2):507--528, 1992.

\bibitem{MR1444123}
D.~Felbacq and G.~Bouchitt{\'e}.
\newblock Homogenization of a set of parallel fibres.
\newblock {\em Waves Random Media}, 7(2):245--256, 1997.

\bibitem{MR3335171}
G.~Hu and A.~Rathsfeld.
\newblock Scattering of time-harmonic electromagnetic plane waves by perfectly
  conducting diffraction gratings.
\newblock {\em IMA J. Appl. Math.}, 80(2):508--532, 2015.

\bibitem{ZhikovMR1329546}
V.~V. Jikov, S.~M. Kozlov, and O.~A. Oleinik.
\newblock {\em Homogenization of differential operators and integral
  functionals}.
\newblock Springer-Verlag, Berlin, 1994.
\newblock Translated from the Russian by G. A. Yosifian [G. A. Iosifyan].

\bibitem{Lamacz-Schweizer-Max}
A.~Lamacz and B.~Schweizer.
\newblock Effective {M}axwell equations in a geometry with flat rings of
  arbitrary shape.
\newblock {\em SIAM J. Math. Anal.}, 45(3):1460--1494, 2013.

\bibitem{Lamacz-Schweizer-2016}
A.~Lamacz and B.~Schweizer.
\newblock A negative index meta-material for {M}axwell's equations.
\newblock {\em {SIAM} J. Math. Anal.}, 48(6):4155--4174, 2016.

\bibitem{Lipton-Schweizer}
R.~Lipton and B.~Schweizer.
\newblock Effective maxwell's equations for perfectly conducting split ring
  resonators.
\newblock TU-Dortmund Preprint No 2016-05 (submitted).

\bibitem{MR1961652}
G.~Schmidt.
\newblock On the diffraction by biperiodic anisotropic structures.
\newblock {\em Appl. Anal.}, 82(1):75--92, 2003.

\bibitem{Schweizer-resonances-survey-2016}
B.~Schweizer.
\newblock Resonance meets homogenization -- construction of meta-materials with
  astonishing properties.
\newblock {\em Jahresberichte der DMV}, 2016.
\newblock DOI: 10.1365/s13291-016-0153-2.

\bibitem{MR2029130}
N.~Wellander and G.~Kristensson.
\newblock Homogenization of the {M}axwell equations at fixed frequency.
\newblock {\em SIAM J. Appl. Math.}, 64(1):170--195 (electronic), 2003.

\end{thebibliography}

\end{document}